\newcommand{\diff}[2]{\mbox{{\rm Diff}{${\,}_{#1}({\mathbb C}^{#2},0)$}}}
\newcommand{\diffh}[2]{\mbox{$\widehat{\rm Diff}{{\,}_{#1}({\mathbb C}^{#2},0)}$}}
\newcommand{\cn}[1]{\mbox{(${\mathbb C}^{#1},0$)}}
\newcommand{\convn}{{\mathbb C} \{ x, x_{1},\ldots,x_{n} \}}
\newcommand{\formn}{{\mathbb C}[[x, x_{1},\ldots,x_{n}]]}
\newcommand{\ox}{\'{o}}
\newtheorem{pro}{Proposition}[section]
\newtheorem{teo}{Theorem}[section]
\newtheorem*{main}{Main Theorem}
\newtheorem{lem}{Lemma}[section]
\newtheorem{rem}{Remark}[section]
\newtheorem{defi}{Definition}[section]
\newtheorem{que}{Question}
\begin{document}

\title[Extension of formal conjugations]
{Extension of formal conjugations between diffeomorphisms}

\author{Javier Rib\ox n}
\address{Instituto de Matem\'{a}tica, UFF, Rua M\'{a}rio Santos Braga S/N
Valonguinho, Niter\'{o}i, Rio de Janeiro, Brasil 24020-140}
\thanks{e-mail address: javier@mat.uff.br}
\thanks{MSC-class. Primary: 37F45; Secondary: 37G10, 37F75, 34E05, 30E15}
\date{\today}
\maketitle

\bibliographystyle{alpha}
\section*{Abstract}
We study the formal conjugacy properties of germs of complex analytic
diffeomorphisms
defined in the neighborhood of the origin of ${\mathbb C}^{n}$.
More precisely, we are interested on the nature of formal conjugations
along the fixed points set. We prove that there are formally conjugated
local diffeomorphisms $\varphi, \eta$ such that every formal conjugation
$\hat{\sigma}$ (i.e. $\eta \circ \hat{\sigma} = \hat{\sigma} \circ \varphi$)
does not extend to the fixed points set  $Fix (\varphi)$ of $\varphi$,
meaning that it is not  transversally formal (or semi-convergent) along
$Fix (\varphi)$.

We focus on unfoldings of $1$-dimensional tangent to the identity
diffeomorphisms.  We identify the geometrical configurations preventing
formal conjugations to extend to the fixed points set: roughly speaking,
either the unperturbed fiber is singular or generic fibers contain multiple
fixed points.

Keywords: resonant diffeomorphism, bifurcation theory,
asymptotic expansions, formal classification, potential theory.
\section{Introduction}
The study of normal forms and normalizing applications is a classical topic
in dynamical systems (see the introduction of \cite{PM1}).
In this spirit we are interested on studying the nature of formal conjugations
between local complex analytic diffeomorphisms.

Divergent power series are associated to analytic dynamical systems in a natural
way. For instance they can appear when calculating first integrals, linearizing maps, etc.
by using the method of undetermined coefficients.
Cauchy already noticed that the divergent series could be useful even if
their use was devoided of any rigor.
Astronomers dealt with these series by summing them up to the smallest term, obtaining
good approximations to solutions. The error of the approximations
is of the order of the smallest term in many practical examples
(see \cite{pham} for a much more detailed explanation).
Poincar\'{e} and Borel studied why these methods are successful.
The theory experienced a boost in the seventies and eighties when the
concept of summable power series and resurgent functions were introduced
by  Ecalle  \cite{Ecalle:resurg1}.
Roughly speaking summable and multi-summable power series solutions represent
analytic solutions defined in
sectorial domains that do not match to provide a unique analytic solution.
Ramis proved that the solutions of analytic linear O.D.E. are multisummable
(see \cite{MalRam:Fou} and \cite{BBRS}). This result was generalized
by Braaksma for non-linear analytic O.D.E. \cite{Braaksma}.
The objects of the previous results are always one variable analytic functions.
The theory of divergent series in several variables is much more difficult
and it is still evolving. The goal of this paper is introducing and studying new properties
of formal conjugacies between several variable complex analytic diffeomorphisms.

  Determining whether two complex analytic germs of diffeomorphism at $\cn{n}$ are formally
conjugated is a classical problem. It is a first step towards an analytic classification
but the formal classification has an intrinsic interest since formal conjugations can support
an underlying geometric nature.
This geometrical structure is typically revealed by showing the summability or
resurgence of the formal objects.

For instance consider
$\varphi_{1}=z + {z}^{p+1} + h.o.t.$ a germ of
diffeomorphism at $\cn{}$.
It is the exponential of a formal vector field (i.e. a derivation of the
ring of formal power series) of the form
$(z^{p+1} + h.o.t.) \partial / \partial z$. It is the so called infinitesimal generator of
$\varphi_{1}$ that we denote $\log \varphi_{1}$.
Its expression is obtained by using undetermined coefficients in the Taylor's formula
\begin{equation}
\label{equ:taylor1}
 {\rm exp}(X) = {\rm exp} (a(z) \partial / \partial z) =
z + \sum_{j=1}^{\infty} \frac{{X}^{j}(z)}{j!} .
\end{equation}
where $X(z)=a(z)$ and $X^{j+1}(z) = X(X^{j}(z))$ for any $j \geq 1$.
For instance we have $X^{2}(z)= a(z) a'(z)$.
There exist sectors $V_{1}$, $\hdots$, $V_{2p}$ whose union is a pointed neighborhood of
$0$ where the infinitesimal generator of $\varphi_{1}$ provides
analytic flows in which the discrete dynamics of $\varphi_{1}$ is embedded.
More precisely the infinitesimal generator of $\varphi_{1}$ is
the $p$-Gevrey asymptotic development
of a unique analytic vector field
$X_{j} = a_{j}(z) \partial / \partial z$ defined in $V_{j}$
such that $\varphi_{1} = {\rm exp}(X_{j})$ for any $1 \leq j \leq 2p$.
Moreover Martinet and Ramis showed that $\log \varphi_{1}$ is p-summable \cite{MaRa:ihes}.
The infinitesimal generator of $\varphi_{1}$ generically diverges
since the vector fields $X_{j}$ do not coincide in the intersection of
their domains of definition.

The situation for the formal conjugacy problem is analogous.
Consider a germ of diffeomorphism $\varphi_{2}$ formally conjugated to
$\varphi_{1}$. Again
every formal conjugation $\hat{\sigma}$ is the $p$-Gevrey asymptotic development
of a unique $\sigma_{j} \in {\mathcal O}(V_{j})$ satisfying
$\sigma_{j} \circ \varphi_{1} = \varphi_{2} \circ \sigma_{j}$.
Moreover $\hat{\sigma}$ is p-summable \cite{MaRa:ihes}.
In general there is no convergent choice of $\hat{\sigma}$.
In the particular case where  $\varphi_{1}$ and $\varphi_{2}$ are
embedded in analytic flows then $\hat{\sigma}$
is convergent.
But the infinitesimal generators are in general only $p$-summable and this
forces the conjugation $\hat{\sigma}$ to be also $p$-summable.
In this example formal conjugations are as badly behaved as the infinitesimal
generators. One of the goals of this paper is proving that conjugations
can be more pathological than infinitesimal generators.
\subsection{Properties of formal conjugacies along fixed points}
We denote ${\rm Diff} ({\mathbb C}^{n},q)$ the group of complex analytic germs of
diffeomorphism at $q=(q_{1},\hdots,q_{n}) \in {\mathbb C}^{n}$. Let $\widehat{\rm Diff} ({\mathbb C}^{n},q)$
be the formal completion of ${\rm Diff} ({\mathbb C}^{n},q)$
with respect to the filtration
$\{{\mathfrak m}^{k} \times \hdots \times {\mathfrak m}^{k} \}_{k \in {\mathbb N} \cup \{0\}}$
where ${\mathfrak m}$ is the maximal ideal of ${\mathbb C}\{y_{1}-q_{1},\hdots,y_{n}-q_{n}\}$
(see \cite{Eisenbud}, section 7.1). The composition in
$\widehat{\rm Diff} ({\mathbb C}^{n},q)$ is defined in the natural way by taking the composition in
${\rm Diff} ({\mathbb C}^{n},q)$ and passing to the limit in the Krull topology
(see \cite{Eisenbud}, page 204).

We focus
on the action of  formal conjugations on the fixed points sets of
diffeomorphisms.
Consider coordinates $(y_{1},\hdots,y_{n}) \in {\mathbb C}^{n}$.
We say that $\hat{\sigma} \in \diffh{}{n}$ is {\it transversally formal} (t.f. for shortness)
along a germ of analytic set $\gamma$ given by an ideal $I(\gamma)$ if
$y_{j} \circ \hat{\sigma}$ can be expressed in the form
\[ y_{j} \circ \hat{\sigma} = \sum_{k=0}^{\infty} c_{j,k}  \]
where $c_{j,k} \in {\mathbb C} \{y_{1},\hdots,y_{n} \} \cap I(\gamma)^{k}$
for all $k \geq 0$ and $1 \leq j \leq n$.
Formal transversality is also called semi-convergence sometimes since
intuitively the formal series is convergent in the direction of $\gamma$ and can diverge
in the direction transversal to $\gamma$.
If we can choose $c_{j,k} \in {\mathcal O}(U)$ for
all $k \geq 0$, $1 \leq j \leq n$ and some neighborhood $U$ of
$0$ we say that $\hat{\sigma}$ is uniformly transversally formal
(u.t.f.) along $\gamma$. Denote by $Fix (\sigma)$ the fixed points set
of $\sigma \in \diff{}{n}$. In this paper we prove:
\begin{main}
\label{teo:intr1}
Let $n \geq 2$. There exist $\varphi_{1},\varphi_{2} \in \diff{}{n}$ such that
$\hat{\sigma} \circ \varphi_{1} = \varphi_{2} \circ \hat{\sigma}$ for some $\hat{\sigma} \in \diffh{}{n}$
but no choice of $\hat{\sigma}$ is t.f. along $Fix (\varphi_{1})$.
\end{main}
Next, we  explain the interest of this result.
Given $\varphi \in \diff{}{n}$ and a point $q \in Fix (\varphi)$ close to $0$
we define the germ $\varphi_{q} \in {\rm Diff} ({\mathbb C}^{n},q)$ obtained
by restricting $\varphi$ to a neighborhood of $q$. Then we
can consider $\varphi \in \diff{}{n}$ as a family ${(\varphi_{q})}_{q \in U \cap Fix (\varphi)}$
for some neighborhood $U$ of $0$. We can interpret $\varphi$ as a germ defined in
the neighborhood of any of its fixed points.
This situation provides the motivation to consider the following questions:
\begin{que}
\label{que:first}
Suppose that $\varphi_{1}, \varphi_{2} \in \diff{}{n}$ are formally conjugated.
Does there exist a formal conjugation whose action on $Fix (\varphi_{1})$ is convergent?
\end{que}
\begin{que}
\label{que:second}
Suppose that $\varphi_{1}, \varphi_{2} \in \diff{}{n}$ are formally conjugated by
$\hat{\sigma} \in \diffh{}{n}$. Assume that the action of $\hat{\sigma}$ on $Fix (\varphi_{1})$ is
convergent. Are the germs $\varphi_{1,q}$ and $\varphi_{2,\hat{\sigma}(q)}$ formally conjugated
for any $q \in U \cap Fix (\varphi_{1})$ and some neighborhood $U$ of $0$?
\end{que}
\begin{que}
\label{que:third}
Suppose that the answer of question (\ref{que:second}) is affirmative for
$\varphi_{1}$, $\varphi_{2}$ and $\hat{\sigma}$.
Does there exist $\hat{\tau} \in \diffh{}{n}$ such that $\hat{\tau}$ acts on $Fix (\varphi_{1})$
as $\hat{\sigma}$ and induces a formal conjugation between
$\varphi_{1,q}$ and $\varphi_{2,\hat{\sigma}(q)}$ for any $q \in  V \cap Fix (\varphi_{1})$
and some neighborhood $V$ of $0$?
\end{que}
We can replace the fixed points with the periodic points of order $p$ for every $p \in {\mathbb N}$
in the previous questions.

We are interested on studying the nature of formal conjugations and its
geometrical meaning.
Naturally we focus initially on the fixed points set.
Of course we have to consider non-generic diffeomorphisms since otherwise
$Fix (\varphi) =\{ 0 \}$ and the answer is obviously
affirmative for the three questions. In this paper we work with
unfoldings $(h(x,x_{1},\hdots,x_{n}),x_{1},\hdots,x_{n})$.
This is an interesting case in itself.
We want to understand
how the structure of a formal conjugation extends
to the fixed points set for the perturbed diffeomorphisms.

Let us clarify the previous statements.
  We say that $\hat{\sigma} \in \diffh{}{n}$ has a convergent action on an analytic germ of analytic
set $\gamma$ if there exists $\sigma \in \diff{}{n}$ such that
\[ \hat{\sigma} \circ {\sigma}^{(-1)} - Id \in I(\gamma) \times \hdots \times I(\gamma) \]
where $I(\gamma)$ is the ideal of $\gamma$. We denote $\hat{\sigma}(q) = \sigma(q)$ for any $q \in \gamma$.

For the questions (\ref{que:second}) and (\ref{que:third}) we can suppose that
\[ \hat{\sigma} - Id \in I(Fix (\varphi_{1})) \times \hdots \times I(Fix (\varphi_{1})) \]
by replacing $\hat{\sigma}$ with $\hat{\sigma} \circ \sigma^{(-1)}$. In question  (\ref{que:third})
we want to know whether $\hat{\sigma}$ can be extended to an irreducible component $\gamma$
of $Fix (\varphi_{1})$. The strong form of question (\ref{que:third}) asks whether or not
$\hat{\sigma}$ is u.t.f. along $Fix (\varphi_{1})$. In such a case
$\hat{\sigma}$ belongs to $\widehat{\rm Diff} ({\mathbb C}^{n},q)$ and conjugates
$\varphi_{1,q}$ and $\varphi_{2,q}$ for any $q \in U \cap Fix (\varphi_{1})$ and some
neighborhood $U$ of $0$. In the weak version of question (\ref{que:third}) we wonder
whether $\hat{\sigma}$ is t.f. along $Fix (\varphi_{1})$.
\subsection{Analyzing the questions}
We answer the questions for germs of finite-dimensional unfoldings of elements of
$\diff{}{}$.  Consider coordinates $(x,x_{1},\hdots,x_{n}) \in {\mathbb C}^{n+1}$.
We define the group
\[ \diff{p}{n+1} = \{ \varphi \in \diff{}{n+1} : x_{j} \circ \varphi = x_{j} \
{\rm for \ any} \ 1 \leq j \leq n \}  \]
where $n \in {\mathbb N} \cup \{0 \}$. We define
\[ \diff{up}{n+1} = \{ \varphi \in \diff{p}{n+1} : (\partial{(x \circ \varphi)}/\partial{x})(0)=1 \}  . \]
It is the group of $n$-parameter unfoldings of tangent to the identity diffeomorphisms.
Denote by ${\rm Diff}_{r} ({\mathbb C}^{n+1},0)$ the subgroup
of $\diff{p}{n+1}$ whose elements satisfy
$\varphi^{(k)} \in \diff{up}{n+1}$ for some $k \in {\mathbb N}$.
Then ${\rm Diff}_{r} ({\mathbb C}^{n+1},0)$ is the group of
$n$-parameter unfoldings of resonant diffeomorphisms.

The answer to the questions is affirmative if
$\varphi_{1} \not \in  {\rm Diff}_{r} ({\mathbb C}^{n+1},0)$.
Indeed $\varphi_{1}$ is t.f. conjugated to
$(a(x_{1},\hdots,x_{n}) x, x_{1},\hdots,x_{n})$ for some unit
$a \in {\mathbb C}\{x_{1},\hdots,x_{n}\}$.
It suffices to work in ${\rm Diff}_{r} ({\mathbb C}^{n+1},0)$.
The next proposition implies that we can reduce the study to $\diff{up}{n+1} $.
\begin{pro}
Let $\varphi_{1},\varphi_{2} \in {\rm Diff}_{r} ({\mathbb C}^{n+1},0)$. Then
$\varphi_{1}$, $\varphi_{2}$ are formally conjugated if and only if
$(\partial (x \circ \varphi_{1})/\partial x)(0) =
(\partial (x \circ \varphi_{2})/\partial x)(0)$
and
$\varphi_{1}^{k}, \varphi_{2}^{k}$
are formally conjugated ($k$ is the period of
$(\partial (x \circ \varphi_{1})/\partial x)(0)$).
\end{pro}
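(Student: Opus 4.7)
The direction $(\Rightarrow)$ is immediate: iterating $\hat\tau\circ\varphi_1=\varphi_2\circ\hat\tau$ gives $\hat\tau\circ\varphi_1^k=\varphi_2^k\circ\hat\tau$, and differentiating at the origin forces $D\varphi_1(0)$ and $D\varphi_2(0)$ to be conjugate matrices. Since each $D\varphi_i(0)$ has spectrum $\{1,\ldots,1,\lambda_0^{(i)}\}$, with the $n$-fold eigenvalue $1$ arising from the fixed parameter coordinates $x_1,\ldots,x_n$, the two spectra must coincide and $\lambda_0^{(1)}=\lambda_0^{(2)}$.

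For the converse, I would write $\lambda_0:=\lambda_0^{(1)}=\lambda_0^{(2)}$ (the case $\lambda_0=1$ forces $k=1$ and is vacuous) and fix $\hat\sigma$ conjugating $\varphi_1^k$ to $\varphi_2^k$. The plan proceeds in three steps. First, by the formal Jordan decomposition for a resonant diffeomorphism, I would conjugate each $\varphi_i$ into the multiplicative normal form $\varphi_i=R\circ U_i$, where $R(x,x_1,\ldots,x_n)=(\lambda_0 x,x_1,\ldots,x_n)$ and $U_i\in\diff{up}{n+1}$ commutes with $R$; then $\varphi_i^k=U_i^k$. Second, each $U_i^k=\exp(\hat X_i)$ has a unique tangent-to-identity formal $k$-th root, namely $U_i=\exp(\hat X_i/k)$, and since $\hat\sigma$ conjugates the infinitesimal generators $\hat X_1$ and $\hat X_2$, it also conjugates $U_1$ to $U_2$. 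Third, I would average to produce an $R$-equivariant conjugation:
\[
\hat\tau:=\frac{1}{k}\sum_{j=0}^{k-1}R^{-j}\circ\hat\sigma\circ R^{j},
\]
interpreted coordinate-wise as a sum of formal power series. Using $RU_i=U_iR$ and the identity $\hat\sigma\circ U_1=U_2\circ\hat\sigma$, one checks directly that $\hat\tau\circ R=R\circ\hat\tau$ and $\hat\tau\circ U_1=U_2\circ\hat\tau$, from which $\hat\tau\circ\varphi_1=R\circ U_2\circ\hat\tau=\varphi_2\circ\hat\tau$.

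The main obstacle will be verifying that $\hat\tau$ is actually a formal diffeomorphism, because averaging projects $D\hat\sigma(0)$ onto its block-diagonal part with respect to the $R$-eigenspace splitting $\mathbb{C}\oplus\mathbb{C}^{n}$. The key observation is that, writing $U_1^k=(h_1,x_1,\ldots,x_n)$ and $\hat\sigma=(\sigma_0,\sigma_1,\ldots,\sigma_n)$, the components of $\hat\sigma\circ U_1^k=U_2^k\circ\hat\sigma$ indexed by $l\geq 1$ read
\[
\sigma_l\bigl(h_1(x,x_1,\ldots,x_n),\,x_1,\ldots,x_n\bigr)=\sigma_l(x,x_1,\ldots,x_n).
\]
Restricting to the central fiber $x_1=\cdots=x_n=0$ (and, in the degenerate case $h_1(\cdot,0)\equiv x$, first differentiating the relation in the parameters) and using the nontriviality of $U_1^k$, which is guaranteed since $\mathbb{C}[[x]]$ is a domain, yields $\sigma_l(x,0,\ldots,0)=0$ for $l\geq 1$; hence $\partial\sigma_l/\partial x(0)=0$. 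Consequently $D\hat\sigma(0)$ is block-upper-triangular, and its invertibility forces both diagonal blocks to be invertible, which in turn makes $D\hat\tau(0)$ invertible and completes the construction.
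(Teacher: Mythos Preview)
Your argument has a genuine gap at the averaging step. You define
\[
\hat\tau=\frac{1}{k}\sum_{j=0}^{k-1}R^{-j}\circ\hat\sigma\circ R^{j}
\]
and claim that $\hat\tau\circ U_1=U_2\circ\hat\tau$. It is true that each summand $\sigma_j:=R^{-j}\hat\sigma R^{j}$ satisfies $\sigma_j\circ U_1=U_2\circ\sigma_j$, and averaging the left-hand sides does give $\hat\tau\circ U_1$, since precomposition by a fixed map is linear in $\sigma_j$. But averaging the right-hand sides gives $\tfrac{1}{k}\sum_j U_2\circ\sigma_j$, and this is \emph{not} $U_2\circ\hat\tau$ in general: postcomposition by the nonlinear map $U_2$ does not commute with taking linear combinations. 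The Reynolds-type averaging trick produces an $R$-equivariant map, but it destroys the conjugacy with $U_1,U_2$ as soon as $U_2\neq Id$. So the passage from ``$\hat\sigma$ conjugates $U_1$ to $U_2$'' to ``some $R$-equivariant map conjugates $U_1$ to $U_2$'' is not justified.

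The paper itself does not give a proof here; it cites prop.~5.4 of \cite{JR:mod} and in fact records a stronger fact: unless $\varphi_1^{k}\equiv Id\equiv\varphi_2^{k}$, the \emph{same} $\hat\sigma$ conjugating the $k$-th powers already conjugates $\varphi_1$ and $\varphi_2$. That route bypasses averaging entirely. In your framework this amounts to showing that $\hat\sigma R\hat\sigma^{-1}=R$, i.e.\ that $\hat\sigma R\hat\sigma^{-1}$, which is a period-$k$ element of the centraliser $Z(U_2)$ with linear part conjugate to $R$, must actually equal $R$. Establishing this uniqueness (using that $\log U_2$ is a nonzero multiple of $\partial/\partial x$ and analysing $Z(U_2)$) is the missing ingredient; the linear averaging you propose cannot substitute for it.
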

A stronger result can be proved. Suppose that $\hat{\sigma} \circ
\varphi_{1}^{k} =  \varphi_{2}^{k} \circ \hat{\sigma}$ for some
$\hat{\sigma} \in \diffh{}{n}$ and $(\partial (x \circ
\varphi_{1})/\partial x)(0) = (\partial (x \circ
\varphi_{2})/\partial x)(0)$. Then either $\varphi_{1}^{k} \equiv Id
\equiv \varphi_{2}^{k}$ and $\varphi_{1}$, $\varphi_{2}$ are
analytically conjugated or $\hat{\sigma} \circ \varphi_{1} =
\varphi_{2} \circ \hat{\sigma}$. These results are contained in
prop. 5.4 in \cite{JR:mod}. There the context is a bit different but
the generalization is straightforward.

The answer to question  (\ref{que:first}) is affirmative if
$\varphi_{1} \in \diff{up}{n+1}$ (proposition \ref{pro:action}).

To answer question  (\ref{que:second}) in $\diff{up}{n+1}$ we have
to divide the irreducible components of $Fix (\varphi)$ in two sets.
Let $\gamma$ be an irreducible hypersurface of $Fix(\varphi)$ for some
 $p \in {\mathbb N}$.
We say that $\gamma$ is {\it unipotent} with respect to $\varphi$ if
$( {\partial{(x \circ \varphi)} / \partial{x} )}_{|\gamma} \equiv 1$.
If $\gamma$ is unipotent with respect to $\varphi_{1}$
then the answer to question  (\ref{que:second}) is positive for any
$q \in \gamma$ (prop. \ref{pro:prifp}). If $\gamma$ is non-unipotent
the result still holds true for any
$q \in \gamma$ such that $|(\partial{(x \circ \varphi_{1})}/\partial{x})(q)| \neq 1$.
We can not extend the result to every point of $\gamma$
(remark \ref{rem:nutf}). Anyway the property in question  (\ref{que:second})  is satisfied for
generic fixed points.

Let us notice that the answers of questions (\ref{que:first}) and (\ref{que:second})
are consequences of the results in \cite{UPD}. This paper is intended to deal
with question (\ref{que:third}).
\section{Studying formal transversality}
Let us address question (\ref{que:third}).
By the previous discussion we can not expect a formal conjugation $\hat{\sigma}$
to be u.t.f. along a
non-unipotent hypersurface of $Fix (\varphi_{1})$. But we could hope for the
formal conjugations to
be t.f. along $Fix (\varphi_{1})$ and u.t.f. along the unipotent irreducible components of
$Fix (\varphi_{1})$.

  Fix $f \in \convn$ such that $f(0)=(\partial{f}/\partial{x})(0)=0$. The function
$x \circ \varphi -x$ is of the previous form for any $\varphi \in \diff{up}{n+1}$. We define
\[ {\mathcal D}_{f} = \{ \varphi \in \diff{up}{n+1} : (x \circ \varphi - x)/f \ {\rm is \ a \ unit} \} . \]
It is the set of unfoldings whose fixed points set is $f=0$.
Fix $\gamma$ an irreducible component of $f=0$. We have
$(\partial{(x \circ \varphi)} / \partial{x})_{|\gamma} \equiv 1$ if and only if
$(\partial f / \partial x)_{|\gamma} \equiv 0$
for any $\varphi \in {\mathcal D}_{f}$.
We say that an irreducible component $\gamma$ of $f=0$ is {\it unipotent}
if $(\partial f/\partial x)_{|\gamma} \equiv 0$.

We say that a germ of variety is {\it fibered} if it is a union of orbits of
$\partial/\partial{x}$.
By definition $\hat{\sigma} \in \diffh{p}{n+1}$ is {\it normalized} with respect to $f=0$
if $x \circ \hat{\sigma} - x \in I(\gamma)$ for any non-fibered irreducible component
$\gamma$ of $f=0$. The points in non-fibered components of
$f=0$ are fixed points of a normalized transformation $\hat{\sigma}$.

Let $\varphi_{1},\varphi_{2} \in {\mathcal D}_{f}$; we denote $\varphi_{1} {\sim}_{*} \varphi_{2}$
if $\varphi_{1}$ and $\varphi_{2}$ are conjugated by a normalized
$\hat{\sigma} \in \diffh{p}{n+1}$.
If $\hat{\sigma}$ can be chosen t.f. along $f=0$ then we we denote
$\varphi_{1} \stackrel{t}{\sim}_{*} \varphi_{2}$.
Finally if we can choose $\hat{\sigma}$ to be t.f. along $f=0$ and u.t.f. along the
irreducible unipotent components
of $f=0$ we denote $\varphi_{1} \stackrel{ut}{\sim}_{*} \varphi_{2}$.
\begin{teo}
\label{teo:intr2}
  Let $f=x^{a}(x-x_{1})^{b}$ for some $(a,b) \in {\mathbb N} \times {\mathbb N}$.
Then there exists $\varphi_{1},\varphi_{2} \in {\mathcal D}_{f} \subset \diff{up}{2}$
such that $\varphi_{1} {\sim}_{*} \varphi_{2}$ but
$\varphi_{1} \not \stackrel{t}{\sim}_{*} \varphi_{2}$.
\end{teo}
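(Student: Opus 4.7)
The plan is to construct $\varphi_1, \varphi_2$ as exponentials of two convergent vector fields sharing the singular locus $\{f=0\}$, chosen so that their formal invariants coincide while a Stokes-type obstruction prevents any normalized formal conjugation from being transversally formal along the union of the two components $\{x=0\}$ and $\{x=x_1\}$, which meet at the origin.

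First I would parametrize the candidates. Writing $\varphi_j = \exp(f\, u_j(x,x_1)\, \partial/\partial x)$ for convergent units $u_j$ with $u_j(0,0)\neq 0$, the formal invariants of $\varphi_j$ inside $\mathcal{D}_f$ are the residues of $dx/(f u_j)$ at the two fixed points, viewed as convergent functions of $x_1$. The existence of a normalized formal conjugation $\hat\sigma = (x + x(x-x_1)\,\hat g(x,x_1),\, x_1)$ reduces, up to a standard cohomological step in the formal classification inside $\diff{up}{2}$, to the equality of these two pairs of residue functions; so I would impose that matching condition and leave the higher-order data of $u_1, u_2$ free.

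Next I would extract the necessary condition for $\hat\sigma$ to be t.f. along $\{f=0\}$. By definition, this forces $\hat g \bmod (x(x-x_1))$ to be convergent, which by the Chinese Remainder Theorem means that both restrictions $\hat g(0, x_1) \in \mathbb{C}[[x_1]]$ and $\hat g(x,x) \in \mathbb{C}[[x]]$ are convergent one-variable series (automatically agreeing at $(0,0)$ since $\hat g \in \form$). The conjugacy equation $\hat\sigma \circ \varphi_1 = \varphi_2 \circ \hat\sigma$, written infinitesimally, gives a homological equation whose right-hand side is built from $u_2 - u_1$; specializing it to each component and solving in the Fatou sectors of the fiberwise vector fields $f u_j \partial/\partial x$ at each parabolic fixed point yields an explicit expression for $\hat g(0,x_1)$ (and, symmetrically, $\hat g(x,x)$) as a difference of Fatou coordinates, well defined once a sector in the parameter $x_1$-plane has been fixed.

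The hard part, and the main obstacle, is then to exhibit units $u_1, u_2$ for which the formal series so produced is genuinely divergent. The mechanism I would exploit is that as $x_1 \to 0$ the two fixed points $x=0$ and $x=x_1$ collide: the generic fiber $\{x_1 = c\}$ has two distinct parabolic points, while the exceptional fiber $\{x_1 = 0\}$ has a single one of order $a+b$. The Glutsyuk/Lavaurs matching functions comparing Fatou coordinates on adjacent sectors develop essential singularities at $x_1 = 0$, and a potential-theoretic estimate (as the keywords of the paper suggest) should bound from below the Taylor coefficients of $\hat g(0, x_1)$ by a factorial growth, precluding convergence. The delicate point to maintain throughout is that this divergence cannot be cancelled by composing $\hat\sigma$ with a normalized formal automorphism commuting with $\varphi_1$, which requires identifying that centralizer inside $\diffh{up}{2}$ and checking it acts trivially on the cohomology class carrying the obstruction.
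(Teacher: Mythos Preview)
Your setup cannot produce a counterexample. You take $\varphi_{j}=\exp(u_{j}f\,\partial/\partial x)$ with \emph{convergent} units $u_{1},u_{2}$. Then the homological equation associated to $\varphi_{1},\varphi_{2}$ is
\[
\frac{\partial\alpha}{\partial x}=\Bigl(\frac{1}{u_{1}}-\frac{1}{u_{2}}\Bigr)\frac{1}{f},
\]
whose right-hand side lies in the field of fractions of ${\mathbb C}\{x,x_{1}\}$. If $\varphi_{1}\sim_{*}\varphi_{2}$ then this equation is special, and a convergent special equation always has a convergent special solution (Definition~\ref{def:spec}). For $f=x^{a}(x-x_{1})^{b}$ there are no fibered components, so by Theorem~\ref{teo:cartra} the existence of a convergent special solution already gives $\varphi_{1}\stackrel{t}{\sim}_{*}\varphi_{2}$. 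In other words, two elements of ${\mathcal D}_{f}$ with convergent infinitesimal generators are formally conjugate if and only if they are t.f.\ conjugate; your candidates can never witness $\sim_{*}\neq\stackrel{t}{\sim}_{*}$. The Stokes/Lavaurs data you invoke are obstructions to \emph{analytic} conjugacy, not to transversal formality of a formal conjugation.

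The paper exploits precisely the opposite situation: it fixes $\varphi_{1}=\exp(f\,\partial/\partial x)$ and builds $\varphi_{2}$ inside the family ${\mathcal D}_{f}'$ starting from a base point $\varphi_{0}$ whose infinitesimal generator is \emph{divergent} (Proposition~\ref{pro:exlodi}). The divergence of $\log\varphi_{0}$ is what makes the right-hand side of the (reduced) derived equation divergent, and a functional-analytic argument (uniform boundedness principle plus the growth of inverse Hilbert matrices, Proposition~\ref{pro:mosiop2}) shows that the associated transport operator $S_{a,b,\varphi_{0}}$ must then send some convergent $\Delta$ to a divergent series. A polar-set dichotomy (Proposition~\ref{pro:PM}) then produces a specific $\lambda_{0}$ with $\exp(f\,\partial/\partial x)\not\stackrel{t}{\sim}_{*}\varphi_{\lambda_{0},\Delta}$. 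So the missing idea in your plan is that at least one of the two diffeomorphisms must have a divergent infinitesimal generator, and the obstruction is detected algebraically through the homological equation rather than through sectorial Fatou coordinates.
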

\begin{teo}
\label{teo:intr3}
  Let $f=(x_{2}-xx_{1})^{c}$ for some $c \in {\mathbb N}$.
Then there exists $\varphi_{1},\varphi_{2} \in {\mathcal D}_{f} \subset \diff{up}{3}$
such that $\varphi_{1} {\sim}_{*} \varphi_{2}$ but
$\varphi_{1} \not \stackrel{t}{\sim}_{*} \varphi_{2}$.
\end{teo}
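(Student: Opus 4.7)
The plan is to construct explicit counter-examples $\varphi_1,\varphi_2 \in \mathcal{D}_f$ of the form $\varphi_j = \exp(\hat{u}_j f \partial/\partial x)$ for carefully chosen formal units $\hat{u}_j \in \mathbb{C}[[x,x_1,x_2]]$, and then to derive a contradiction by projecting the hypothetical t.f.\ conjugation onto generic fibers of the map $(x_1,x_2)$ and following what happens as we approach the singular fiber $x_1 = 0$. The key geometric feature to exploit is that $\{x_2 = x x_1\}$ meets a generic vertical line $\{x_1 = c_1, x_2 = c_2\}$ with $c_1 \neq 0$ at the single point $x = c_2/c_1$, where the restricted diffeomorphism is tangent to the identity of order $c$; but this fixed point escapes to infinity as $c_1 \to 0$ with $c_2$ fixed, and the whole $x$-axis belongs to $Fix(\varphi_j)$ when $(x_1,x_2)=(0,0)$. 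This incompatible behaviour is what should obstruct holomorphic/asymptotic extension.

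First I would establish that $\varphi_1 \sim_* \varphi_2$ for a suitable choice of $\hat{u}_1,\hat{u}_2$. The formal conjugacy equation $\hat{\sigma}\circ\varphi_1 = \varphi_2\circ\hat{\sigma}$, for $\hat{\sigma}\in\widehat{\mathrm{Diff}}_p(\mathbb{C}^3,0)$ normalized (i.e.\ $x\circ\hat{\sigma}-x \in I(x_2-xx_1)$), translates at the level of infinitesimal generators into an equation of the form $\hat{\sigma}_* (\hat{u}_1 f \partial/\partial x) = \hat{u}_2 f \partial/\partial x$, which I would solve order by order in the $I(x_2-xx_1)$-adic filtration, using the fact that the obstruction to solving at each level lives in a cokernel which vanishes on units for generic choices. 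This produces a normalized $\hat{\sigma}$ without any a priori claim of transversal formality, proving the first half of the statement. The choice of $\hat{u}_1,\hat{u}_2$ is to be designed so that the sectorial/Ecalle--Voronin invariants of the one-parameter family $\{\varphi_1\vert_{\text{fiber}(c_1,c_2)}\}$ vary with $(c_1,c_2)$ in a way that encodes a genuinely divergent "Stokes datum" as $c_1 \to 0$.

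For the negative part, assume $\hat{\sigma}$ is t.f.\ along $\{x_2=xx_1\}$. Restricting to a generic parameter line and invoking the sectorial theory of Martinet--Ramis for unfoldings of tangent-to-identity diffeomorphisms (as used for the answer to Question \ref{que:second} via \cite{UPD}), the t.f.\ hypothesis forces the family of local Ecalle--Voronin conjugations at the moving fixed point $x = x_2/x_1$ to assemble into a sectorial holomorphic object whose asymptotic expansion at $(x_1,x_2)=(0,0)$ is $\hat{\sigma}$ itself. The contradiction should come from analyzing this object as one approaches the singular fiber along different directions: along $x_2 = 0, x_1 \to 0$ the fixed point stays at the origin and the Stokes data vary tamely, while along $x_2 = $ const $\neq 0, x_1 \to 0$ the fixed point escapes, and compatibility of the two limits forces the Stokes cocycle to be trivial -- which contradicts the choice of $\hat{u}_1,\hat{u}_2$.

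The hardest step, which I expect to dominate the actual proof, is the quantitative description of the Stokes/Ecalle--Voronin moduli of the fiberwise parabolic diffeomorphisms as functions of $(x_1,x_2)$ near the singular parameter, and the proof that the t.f.\ hypothesis forces these moduli to satisfy a potential-theoretic compatibility condition (a boundary-value / Cauchy-type identity) along the two one-parameter families above which is incompatible with a generic prescription of $\hat{u}_1,\hat{u}_2$. The keyword \emph{potential theory} in the abstract suggests that a Cauchy integral representation of the Stokes data, obstructed by a pole or residue arising from the singular fiber, is the cleanest way to package the obstruction and to certify that a generic choice of units gives a counter-example.
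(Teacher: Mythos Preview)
Your proposal is a heuristic outline rather than a proof, and it diverges substantially from what the paper actually does. The paper never touches Ecalle--Voronin moduli, Stokes data, or sectorial conjugations for this theorem. Instead the argument is algebraic and functional-analytic: by Theorem~\ref{teo:cartra}, $\varphi_{1}\stackrel{t}{\sim}_{*}\varphi_{2}$ is equivalent to the existence of a special solution of the homological equation converging by restriction to $f_{N}=0$. The paper then works inside the subclass ${\mathcal D}_{f}'$ (where $\varphi_{1}=\exp(f\,\partial/\partial x)$ automatically) and considers polynomial deformations $\varphi_{\lambda,\Delta}=(x\circ\varphi_{0}+\lambda f^{2}\Delta,x_{1},x_{2})$. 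The ``potential theory'' of the abstract is the P\'erez-Marco dichotomy (Proposition~\ref{pro:PM}): the homological equation is polynomial in $\lambda$ with controlled degree, so either the restriction converges for every $\lambda$ or it diverges off a polar set. Derivating in $\lambda$ at $0$ reduces the problem to showing that a linear operator $L_{3}^{v}$ built from the reduced derived equation fails to send ${\mathbb C}\{x,y,z\}$ into ${\mathbb C}\{x,y\}$ when $v$ is divergent; this is Proposition~\ref{pro:mosiop3}, proved via the uniform boundedness principle and spectral estimates for Hilbert matrices. One then picks $\varphi_{0}\in{\mathcal D}_{f}'$ with divergent $\log\varphi_{0}$ (Proposition~\ref{pro:exlodi}) and is done.

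Your sketch has genuine gaps beyond being a different route. First, you never specify $\hat u_{1},\hat u_{2}$, yet the whole argument hinges on a ``generic'' choice whose meaning is never pinned down; the paper, by contrast, produces concrete obstructions via the operator $T_{c,\varphi_{0}}$. Second, the assertion that a t.f.\ conjugation forces the fiberwise Martinet--Ramis normalizations to assemble into a sectorial holomorphic object with asymptotic expansion $\hat\sigma$ is not a standard result and is far from obvious in this $3$-variable setting where the fixed-point hypersurface is tangent to the fibration; you would need to prove it. Third, the ``incompatible limits along $x_{2}=0$ versus $x_{2}=\mathrm{const}$'' step is only a picture: you have not identified a numerical invariant that is forced to two different values. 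Finally, your reading of ``potential theory'' as a Cauchy-integral representation of Stokes data is a misinterpretation; in the paper it refers to polar sets in the deformation parameter $\lambda$, which is what makes the genericity argument rigorous.
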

  Questions (\ref{que:first}) and (\ref{que:second}) are true in $\diff{p}{n+1}$ once we introduce the proper setup.
In spite of this the answer to question (\ref{que:third}) is negative.
This is a corollary of theorems \ref{teo:intr2} and \ref{teo:intr3}.
These theorems also imply the Main Theorem. Moreover formal conjugations are
more pathological than infinitesimal generators since the latter ones
are always t.f. along the fixed points set (prop. \ref{pro:logtra}).

The nature of the examples provided by theorems \ref{teo:intr2} and \ref{teo:intr3} is different.
For the former one we can choose a normalized $\hat{\sigma}_{j} \in \diffh{p}{2}$ conjugating
$\varphi_{1}$ and $\varphi_{2}$ such that $\hat{\sigma}_{j}$ is t.f. along $x=jx_{1}$ for
$j \in \{0,1\}$. Nevertheless $\hat{\sigma}_{0}$ is not t.f. along $x=x_{1}$ whereas
$\hat{\sigma}_{1}$ is not t.f. along $x=0$. The existence of multiple fixed points in
the fibers $x_{1}=cte$ imposes incompatible conditions on a formal conjugacy in order to
be transversally formal.
For the other example the lack of t.f. conjugations
is associated to the bad position of the smooth hypersurface $x_{2}-xx_{1}=0$
with respect to $\partial/\partial{x}$.
Let us remark that no fiber of  $d x_{1}=d x_{2}=0$ contains more than one fixed
point of $x_{2}-xx_{1}=0$ except the unperturbed fiber $x_{1}=x_{2}=0$.
\subsection{Outline of the proofs}
 Fix $f \in \convn$ such that
$f$ and $\partial{f}/\partial{x}$ vanish at $0$.
We can characterize the properties
$\varphi_{1} \stackrel{t}{\sim}_{*} \varphi_{2}$ and $\varphi_{1} \stackrel{ut}{\sim}_{*} \varphi_{2}$
for $\varphi_{1}, \varphi_{2}$ in ${\mathcal D}_{f}$.
The idea is that classes of formal conjugacy are path-connected and that the study of
conjugations and their properties can be reduced to analyze the tangent space of paths
and in particular some ordinary differential
equations.

The infinitesimal generator $\log \varphi_{k}$ of $\varphi_{k}$ is of the form
$\hat{u}_{k} f \partial / \partial x$ where $\hat{u}_{k}$ is a unit of
the ring $\formn$ for $k \in \{1,2\}$. In other words we have
$\varphi_{k} = {\rm exp}(\hat{u}_{k} f \partial / \partial x)$
(see equations (\ref{equ:taylor1}) and
(\ref{equ:taylor2})). We associate to
$\varphi_{1}$ and $\varphi_{2}$ the so called {\it homological equation}
\[  \frac{\partial \alpha}{\partial{x}}  =
\left({ \frac{1}{\hat{u}_{1}} - \frac{1}{\hat{u}_{2}} }\right) \frac{1}{f} . \]
We characterize the classes of equivalence of the relations
${\sim}_{*}$, $\stackrel{t}{\sim}_{*}$ and $\stackrel{ut}{\sim}_{*}$
in terms of the homological equation (theorems \ref{teo:UPD} and \ref{teo:cartra}).
In particular if the equation has no poles, i.e. $\hat{u}_{1} - \hat{u}_{2} \in (f)$, then we
obtain $\varphi_{1} {\sim}_{*} \varphi_{2}$ (theorem \ref{teo:UPD}).

Consider $\varphi_{1}, \varphi_{2} \in {\mathcal D}_{f}$ whose homological equation
is free of poles and suppose for simplicity that $f=0$
has no fibered irreducible components in the rest of this section.
Any solution $\hat{\alpha}$ of the homological equation in the field of fractions of
$\formn$ belongs to $\formn$. We say that $\hat{\alpha}$
{\it converges by restriction} to $f=0$ if there exists
$\kappa$ in ${\mathbb C}\{x,x_{1},\hdots,x_{n}\}$
such that $\hat{\alpha} - \kappa \in \sqrt{(f)}$ where $ \sqrt{(f)}$ is the radical of
the ideal $(f)$ of $\formn$.
Then  $\varphi_{1} \stackrel{t}{\sim}_{*} \varphi_{2}$ (resp.
$\varphi_{1} \stackrel{ut}{\sim}_{*} \varphi_{2}$) is equivalent to the existence of
a solution of the homological equation converging by restriction to $f=0$
(see theorem \ref{teo:cartra} for the most general result).

Our goal is proving that there exist solutions of the homological equations that
are not convergent by restriction to the fixed points set.
Then we obtain theorem \ref{teo:intr2}. We define
\[ {\mathcal D}_{f}' =
\{ \varphi \in \diff{up}{n+1} :  x \circ \varphi - x \circ {\rm exp}(f \partial/\partial{x}) \in (f^{2}) \}. \]
The set ${\mathcal D}_{f}'$ is a subset of ${\mathcal D}_{f}$.
The homological equation associated to ${\rm exp}(f \partial/\partial{x}) $
and $\varphi \in {\mathcal D}_{f}'$ has no poles; we deduce
${\rm exp}(f \partial/\partial{x}) {\sim}_{*} \varphi$.
Fix $\varphi_{0} \in {\mathcal D}_{f}'$ and $\Delta$ in $\convn$.
We consider the family ($\lambda \in {\mathbb C}$)
\[ \varphi_{\lambda,\Delta} = (x \circ \varphi_{0} + \lambda f^{2} \Delta, x_{1}, \hdots , x_{n})
\in {\mathcal D}_{f}'  . \]
We introduce the {\it derived equation}
$\partial \alpha / \partial x=
(\partial \hat{K}_{\lambda}/\partial \lambda)_{|\lambda=0}/f$
where $\partial \alpha / \partial x=  \hat{K}_{\lambda}/f$ is the homological equation
associate to ${\rm exp}(f \partial/\partial{x}) $ and $\varphi_{\lambda,\Delta}$.
This equation is key since the property
${\rm exp}(f \partial/\partial{x}) \stackrel{t}{\sim}_{*} \varphi_{\lambda,\Delta}$
for any $\lambda \in {\mathbb C}$ implies that there exists a solution of the derived equation
converging by restriction to $f=0$. The proof is of potential-theoretic type.
The derived equation is related to the {\it reduced derived equation}
\[ \frac{\partial \alpha}{\partial x} =
{\left({ \frac{\partial{(x \circ \varphi_{0})}}{\partial{x}} }\right)}^{-1}
\frac{\Delta}{\hat{u}_{0}^{2}} . \]
associated to ${\rm exp}(f \partial/\partial{x})$ and the family $\varphi_{\lambda,\Delta}$
where $\varphi_{0}= {\rm exp}(\hat{u}_{0} f \partial / \partial x)$.
The equation is linear in $\Delta$ and it is in general divergent.
\begin{pro}
\label{pro:exlodi}
Let $f \in \convn$ such that $f$ and $\partial{f}/\partial{x}$ vanish at the origin.
There exists $\varphi_{0} \in {\mathcal D}_{f}'$ such that
the infinitesimal generator $\log \varphi_{0}$ of $ \varphi_{0}$ is divergent.
\end{pro}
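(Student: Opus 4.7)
The plan is to reduce to the one-variable theory, where divergence of the formal infinitesimal generator of a tangent-to-identity diffeomorphism is the classical theorem of Ecalle--Voronin and Martinet--Ramis. The bridge between the two settings is that $\varphi \mapsto \log \varphi$ is defined by the universal formal series obtained by inverting (\ref{equ:taylor1}): the coefficients of $\log \varphi$ are polynomial expressions in the Taylor coefficients of $\varphi$, so $\log$ commutes with the restriction to any germ invariant under $\px$. Applied to the $x$-axis: if $\log \varphi_{0} = \hat{u}_{0} f \px$, then $\hat{u}_{0}(x, 0, \ldots, 0)$ is exactly the coefficient of the 1D formal generator of the restriction $\varphi_{0}|_{x_{1} = \ldots = x_{n} = 0}$.

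Assuming first that $\tilde{f}(x) := f(x, 0, \ldots, 0)$ is a non-zero element of ${\mathbb C}\{x\}$ -- which then vanishes at the origin to some order $k \geq 2$ by the hypotheses on $f$ -- I would set
\[
\varphi_{0} := \left(\exp(f \px)(x) + c f^{2},\ x_{1}, \ldots, x_{n}\right)
\]
for a generic parameter $c \in {\mathbb C}$. Then $\varphi_{0}$ is analytic, lies in $\diff{up}{n+1}$, and satisfies $x \circ \varphi_{0} - x \circ \exp(f \px) = c f^{2} \in (f^{2})$, so $\varphi_{0} \in \mathcal{D}_{f}'$. Its restriction to the $x$-axis is the 1D diffeomorphism $\tilde{\varphi}_{0}(x) = \exp(\tilde{f} \px)(x) + c \tilde{f}^{2}$. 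Classical 1D Martinet--Ramis yields that for all but countably many values of $c$ the Ecalle--Voronin modulus of $\tilde{\varphi}_{0}$ is non-trivial, whence $\tilde{\varphi}_{0}$ is not the time-1 map of any convergent vector field and its formal generator $\tilde{u}_{0} \tilde{f} \px$ has $\tilde{u}_{0} \in {\mathbb C}[[x]]$ divergent. By the commutation of $\log$ with restriction, $\hat{u}_{0}(x, 0, \ldots, 0) = \tilde{u}_{0}(x)$, and any formal series whose restriction to a coordinate subspace diverges is itself divergent in $\formn$.

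The main obstacle is the degenerate case $\tilde{f} \equiv 0$, which occurs when $f \in (x_{1}, \ldots, x_{n})$ (for instance $f = x x_{1} - x_{2}^{2}$); the non-fibered hypothesis on $\{f = 0\}$ still holds, but now the $x$-axis is contained in the singular locus of $\{f = 0\}$ and the naive restriction is trivial. The approach I would pursue is to perform a finite sequence of blow-ups of the parameter space $(x_{1}, \ldots, x_{n})$ -- which lift to birational modifications of $\convn$ commuting with $\px$ -- until the strict transform of $\{f = 0\}$ becomes transverse to some lifted $\px$-orbit in a coordinate chart, apply the construction of the previous paragraph upstairs on that chart, and descend using that the birational change of parameters induces an injective morphism of formal germs at the origin, so that divergence is preserved.
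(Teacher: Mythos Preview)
Your restriction-to-the-$x$-axis strategy is close in spirit to one of the paper's cases, but the degenerate branch has a genuine gap that the blow-up does not repair.

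Take the simplest purely fibered example $f=x_{1}$ (so $\tilde f\equiv 0$). Your perturbation is
\[
\varphi_{c}=(x+x_{1}+c\,x_{1}^{2},\,x_{1}),
\]
and since $x\circ\varphi_{c}-x=x_{1}(1+cx_{1})$ is independent of $x$, one has $\log\varphi_{c}=x_{1}(1+cx_{1})\,\partial/\partial x$, which is \emph{convergent for every} $c$. So the single scalar perturbation $cf^{2}$ cannot produce divergence in this case. The proposed blow-up of the parameter space does not help: any point on the exceptional divisor sits over $x_{1}=\dots=x_{n}=0$, so the $\partial/\partial x$-orbit through it still maps to the original $x$-axis and $f\circ\pi$ still vanishes identically there. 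If instead you work with the strict transform $g$ of $f$ upstairs, an element of $\mathcal D_{g}'$ no longer descends to $\mathcal D_{f}'$; and the lift of your downstairs $\varphi_{c}$ is still the identity on every exceptional fiber, so restricting to such a fiber gives no information about $\hat u_{0}$.

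The paper avoids this by \emph{not} restricting to the central fiber. When $f$ is purely fibered it pulls back a ready-made two-variable Voronin example through the analytic map $(x,x_{1},\dots,x_{n})\mapsto(x,f_{F})$. When some non-fibered factor has multiplicity $\ge 2$ it works at a \emph{nearby} smooth point $q$ of that factor (where the factor is a transverse coordinate), applies the one-variable theory on the fiber through $q$, and then lets $q\to 0$ using that the divergence locus is co-analytic. When all non-fibered factors are simple it uses a completely different mechanism: at nearby points on $\{f=0\}$ the multiplier runs through roots of unity, and a carefully chosen convergent series of higher-order perturbations forces the iterate of the correct period to be non-identity there, so no formal flow can exist on those fibers.

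Your non-degenerate case ($\tilde f\not\equiv 0$) is essentially sound and parallel to the paper's second case, modulo justifying that the one-parameter family ${\rm exp}(\tilde f\,\partial_{x})+c\tilde f^{2}$ actually leaves the embeddable locus for some $c$; this is true but is exactly the step you are outsourcing to Martinet--Ramis, and the paper grants itself the same latitude (allowing, however, a full power series of perturbation coefficients rather than a single scalar). The real obstruction is the degenerate case: there you need either a different perturbation or a different reduction, and the paper's three-case argument shows one way to do it.
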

The previous result holds true even if $f=0$ contains fibered irreducible components.
The connection between the derived equation and the reduced derived equation
is provided by the following proposition.
\begin{pro}
\label{pro:linkder}
Fix $\varphi_{0} \in {\mathcal D}_{f}'$ and $\Delta \in \convn$.
Then the derived equation associated to ${\rm exp}(f \partial/\partial{x})$ and
$\varphi_{\lambda, \Delta}$ has a formal solution converging on $f=0$ if and only if
the reduced derived equation has a formal solution converging on $f=0$.
\end{pro}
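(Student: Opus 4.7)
The plan is to build an explicit variational identity linking $\partial_{\lambda} \hat{u}_{\lambda,\Delta}|_{\lambda=0}$ to $\Delta$, and then to observe that the natural flow-averaging operator $A: \alpha \mapsto \int_{0}^{1} \alpha \circ \Psi_{s}\, ds$ intertwines the two equations while acting as the identity modulo $\sqrt{(f)}$. Here $\Psi_{t} = \mathrm{exp}(t\, \hat{u}_{0}\, f\, \partial/\partial x)$ is the formal time-$t$ flow of the infinitesimal generator of $\varphi_{0}$, so $\Psi_{1} = \varphi_{0}$.

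First I would differentiate the defining ODE $\partial_{t} \Phi^{\lambda}_{t} = (\hat{u}_{\lambda,\Delta}\, f)(\Phi^{\lambda}_{t})$ (whose time-$1$ solution is $x \circ \varphi_{\lambda,\Delta}$) with respect to $\lambda$ at $\lambda = 0$ and integrate the resulting linear variational equation with zero initial data to obtain, at $t = 1$,
\[
f^{2}\Delta \;=\; \frac{\partial(x \circ \varphi_{0})}{\partial x} \int_{0}^{1} \frac{\beta(\Psi_{s})\, f(\Psi_{s})}{\partial \Psi_{s}/\partial x}\, ds, \qquad \beta := \partial_{\lambda} \hat{u}_{\lambda,\Delta}|_{\lambda=0}.
\]
Because $\varphi_{\lambda,\Delta} \in {\mathcal D}_{f}'$ forces $\hat{u}_{\lambda,\Delta} \equiv 1 \pmod{(f)}$, we have $\beta = f\gamma$ for some $\gamma \in \formn$. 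A separate calculation using $\partial_{s} a_{0}(\Psi_{s}) = (\partial_{x} a_{0})(\Psi_{s})\, a_{0}(\Psi_{s})$ with $a_{0} = \hat{u}_{0} f$ yields the key identity $\partial \Psi_{s}/\partial x = a_{0}(\Psi_{s})/a_{0}$.

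Next I would check that $A$ intertwines the equations. If $\partial \alpha_{1}/\partial x = \gamma/\hat{u}_{0}^{2}$, then differentiating under the integral and using $\partial \Psi_{s}/\partial x = a_{0}(\Psi_{s})/a_{0}$ gives
\[
\frac{\partial A(\alpha_{1})}{\partial x} \;=\; \int_{0}^{1} \frac{\gamma(\Psi_{s})}{\hat{u}_{0}^{2}(\Psi_{s})} \cdot \frac{a_{0}(\Psi_{s})}{a_{0}}\, ds \;=\; \frac{1}{\hat{u}_{0}\, f} \int_{0}^{1} \frac{\gamma(\Psi_{s})\, f(\Psi_{s})}{\hat{u}_{0}(\Psi_{s})}\, ds,
\]
and the variational identity collapses this to $\Delta/(\hat{u}_{0}^{2}\, \partial(x \circ \varphi_{0})/\partial x)$, the right-hand side of the reduced derived equation. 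Thus $A$ sends solutions of the derived equation to solutions of the reduced derived equation.

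Because the generator $\hat{u}_{0} f\, \partial/\partial x$ vanishes on $\{f = 0\}$ we have $\Psi_{s} - x \in (f)$, and hence $\alpha(\Psi_{s}) - \alpha \in (f) \subset \sqrt{(f)}$ for every $\alpha \in \formn$; integrating in $s$ gives $A \equiv \mathrm{Id} \pmod{\sqrt{(f)}}$. Both directions of the proposition then follow. For the forward direction, a solution $\alpha_{1} \in \convn + \sqrt{(f)}$ of the derived equation produces the solution $A(\alpha_{1}) \equiv \alpha_{1} \pmod{\sqrt{(f)}}$ of the reduced derived equation, which again lies in $\convn + \sqrt{(f)}$. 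For the reverse direction, given $\alpha_{2} \in \convn + \sqrt{(f)}$ solving the reduced derived equation, take any formal primitive $\tilde\alpha_{1}$ of $\gamma/\hat{u}_{0}^{2}$ and set $\alpha_{1} := \tilde\alpha_{1} + c$, where $c := \alpha_{2} - A(\tilde\alpha_{1})$ depends only on $(x_{1}, \ldots, x_{n})$ (both terms satisfy the reduced derived equation, so their difference has zero $\partial_{x}$); then $A(\alpha_{1}) = \alpha_{2}$, so $\alpha_{1} \equiv \alpha_{2} \pmod{\sqrt{(f)}}$ lies in $\convn + \sqrt{(f)}$. The hard part is the intertwining computation, a purely algebraic check hinging on the variational identity together with $\partial \Psi_{s}/\partial x = a_{0}(\Psi_{s})/a_{0}$; once it is in hand, the convergence transfer is immediate from $A \equiv \mathrm{Id} \pmod{\sqrt{(f)}}$.
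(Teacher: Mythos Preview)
Your argument is correct and is essentially a flow-theoretic repackaging of the paper's computation. The paper works directly with the Taylor expansion of the exponential: writing $\hat{u}_{\lambda}=\hat{u}_{0}+f\sum_{j\geq 1}u_{j}\lambda^{j}$ (so your $\gamma=u_{1}$), it proves the closed identity
\[
\sum_{j\geq 0}\frac{(\log\varphi_{0})^{j}(u_{1}f/\hat{u}_{0})}{(j+1)!}
=\Bigl(\frac{\partial(x\circ\varphi_{0})}{\partial x}\Bigr)^{-1}\frac{f}{\hat{u}_{0}}\,\Delta,
\]
and then notes that the difference between the reduced derived and the derived equation has the explicit primitive
$\hat{\alpha}=\sum_{j\geq 1}(\log\varphi_{0})^{j-1}(u_{1}f/\hat{u}_{0})/(j+1)!\in(f)$; both directions then follow symmetrically by adding or subtracting $\hat{\alpha}$. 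Your variational identity is precisely the integral form of the paper's closed identity, and expanding $A(\alpha_{1})-\alpha_{1}=\int_{0}^{1}\!\int_{0}^{s}(u_{1}f/\hat{u}_{0})(\Psi_{r})\,dr\,ds$ in powers of the flow parameter reproduces the paper's $\hat{\alpha}$ term by term. The paper's presentation is slightly more economical (the reverse implication needs no extra constant-$c$ adjustment), while your averaging operator $A$ gives a cleaner conceptual reason why the two equations are equivalent modulo $\sqrt{(f)}$.
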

The advantage of the reduced derived equation is that it
separates the roles of $\varphi_{0}$ and $\Delta$.

Let us focus on th. \ref{teo:intr2}. The proof of th. \ref{teo:intr3}
is analogous.  Let $f=x^{a}{(x-x_{1})}^{b}$.
We consider $\varphi_{0} \in {\mathcal D}_{f}' \subset \diff{up}{2}$
such that $\log \varphi_{0}$ is divergent. We define the operator
$S_{a,b,\varphi_{0}}:{\mathbb C}\{x,x_{1}\} \to {\mathbb C}[[x_{1}]]$ given by
\[ S_{a,b,\varphi_{0}}(\Delta) = \hat{\alpha}_{\Delta}(x_{1},x_{1}) - \hat{\alpha}_{\Delta}(0,x_{1}) \]
where $\hat{\alpha}_{\Delta}$ is a formal solution of the reduced derived equation associated to
${\rm exp}(x^{a}{(x-x_{1})}^{b}\partial/\partial{x})$ and $\varphi_{\lambda,\Delta}$.
If theorem \ref{teo:intr2} is false then
$S_{a,b,\varphi_{0}}({\mathbb C}\{x,x_{1}\}) \subset {\mathbb C}\{x_{1}\}$.
The following proposition leads us to a contradiction.
\begin{pro}
\label{pro:derlog}
Let $f=x^{a}{(x-x_{1})}^{b}$. Fix $\varphi_{0} \in {\mathcal D}_{f}'$.
Suppose that $S_{a,b,\varphi_{0}}({\mathbb C}\{x,x_{1}\}) \subset {\mathbb C}\{x_{1}\}$.
Then $\log \varphi_{0}$ converges.
\end{pro}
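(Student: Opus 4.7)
My plan is to argue by contrapositive, assuming the hypothesis $S_{a,b,\varphi_{0}}(\mathbb{C}\{x,x_{1}\})\subset\mathbb{C}\{x_{1}\}$ and deducing convergence of $\log\varphi_{0}$. Writing $\varphi_{0}=\mathrm{exp}(\hat{u}_{0}f\,\partial/\partial x)$, I set
\[
M:=1/\bigl(\hat{u}_{0}^{2}\,\partial(x\circ\varphi_{0})/\partial x\bigr).
\]
Since $\partial(x\circ\varphi_{0})/\partial x$ is a convergent unit, convergence of $\log\varphi_{0}$ is equivalent to convergence of $M$, so my task reduces to proving $M\in\mathbb{C}\{x,x_{1}\}$.

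First I would write down the explicit integral representation of $S_{a,b,\varphi_{0}}$. Integrating the reduced derived equation $\partial\alpha/\partial x=M\Delta$ in $x$ yields $\hat{\alpha}_{\Delta}(x,x_{1})=\int_{0}^{x}M(t,x_{1})\,\Delta(t,x_{1})\,dt+C(x_{1})$, and hence
\[
S_{a,b,\varphi_{0}}(\Delta)(x_{1})=\int_{0}^{x_{1}}M(t,x_{1})\,\Delta(t,x_{1})\,dt.
\]
This exhibits $S_{a,b,\varphi_{0}}$ as a $\mathbb{C}\{x_{1}\}$-linear map between $\mathbb{C}\{x,x_{1}\}$ and $\mathbb{C}\{x_{1}\}$, continuous in the Krull topologies. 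A Baire category argument applied to the natural inductive-limit topologies of these spaces upgrades this to continuity at the analytic level: there exist $r,\rho,K>0$ such that $\|S_{a,b,\varphi_{0}}(\Delta)\|_{\rho}\le K\,\|\Delta\|_{r}$ whenever $\Delta$ is holomorphic on the closed polydisk of radius $r$.

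The main inversion step is to extract convergence of $M$ from this continuity estimate. I would apply it to the family of Cauchy kernels $\Delta_{z}(x,x_{1})=(x-z)^{-1}$, parametrized by $z$ in an appropriate region avoiding the integration contour, producing the formal Cauchy transform
\[
\mathcal{C}(z,x_{1}):=\int_{0}^{x_{1}}\frac{M(t,x_{1})}{t-z}\,dt,
\]
which then becomes jointly holomorphic in $(z,x_{1})$ on a suitable domain. A Plemelj--Sokhotski-type boundary identity in $z$ across the contour $[0,x_{1}]$ reconstructs $M$ from $\mathcal{C}$, showing $M$ is analytic. To implement this cleanly I would invoke the blow-up substitution $t=sx_{1}$, which straightens the contour to the fixed segment $s\in[0,1]$ and factors the pullback of $f=x^{a}(x-x_{1})^{b}$ as $x_{1}^{a+b}s^{a}(s-1)^{b}$; the endpoints of the integration path now correspond to the irreducible components $s=0$ and $s=1$ of the pullback, so that the standard Cauchy--Plemelj machinery applies cleanly in the blown-up coordinates.

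The main obstacle will be controlling $\mathcal{C}(z,x_{1})$ as $z$ approaches the integration contour, where the Cauchy kernel $\Delta_{z}$ becomes singular and the continuity estimate degenerates. I would supplement the Cauchy-kernel estimate by analogous estimates on carefully chosen convergent test functions adapted to the factorization of $f$, and combine them through moment-inversion arguments tailored to the blown-up geometry. Once $M\in\mathbb{C}\{x,x_{1}\}$ is established, the product $\hat{u}_{0}^{2}\,\partial(x\circ\varphi_{0})/\partial x=1/M$ is a convergent unit; since $\partial(x\circ\varphi_{0})/\partial x$ is convergent and nonzero at $0$, so is $\hat{u}_{0}^{2}$, and taking a square root yields $\hat{u}_{0}$ convergent. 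Hence $\log\varphi_{0}=\hat{u}_{0}f\,\partial/\partial x$ converges, as required.
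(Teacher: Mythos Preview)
Your reduction is exactly the paper's: setting $v=M=(\partial(x\circ\varphi_{0})/\partial x)^{-1}\hat{u}_{0}^{-2}$, the hypothesis becomes $L_{2}^{v}(\mathbb{C}\{x,y\})\subset\mathbb{C}\{y\}$ (this is the content of Proposition~\ref{pro:diffsim}), and the task is to deduce $v\in\mathbb{C}\{x,y\}$. Your Baire-category step also parallels the paper, which applies the uniform boundedness principle on a Banach space $B^{2}$ of absolutely summable series to obtain $\|L_{2,j}^{v}\|\le C^{j}$ for the coefficient functionals of $L_{2}^{v}$.

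The gap is in the inversion step. Your Cauchy-kernel/Plemelj--Sokhotski proposal requires controlling $\mathcal{C}(z,x_{1})$ as $z$ approaches the integration segment, and you correctly flag this as the obstacle; but ``carefully chosen convergent test functions \ldots\ moment-inversion arguments'' is a placeholder, not an argument. Since $\Delta_{z}=(x-z)^{-1}$ leaves the space as $z\to 0$, the continuity bound tells you nothing there, and no amount of blow-up straightening changes that the endpoints $0$ and $x_{1}$ are genuine singularities of the kernel. Without a concrete mechanism here, the proof is incomplete.

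The paper's inversion avoids rational kernels entirely and uses monomials: testing with $\Delta=1,x,\ldots,x^{k}$ and reading off the coefficients $L_{2,k+1}^{v}(1),\ldots,L_{2,2k+1}^{v}(x^{k})$ yields a linear system for $(v_{0,k},\ldots,v_{k,0})$ whose coefficient matrix is the $(k{+}1)\times(k{+}1)$ Hilbert matrix $\mathrm{Hilb}^{k}_{a,b}=1/(a{+}b{-}1)$. The known asymptotic $\|(\mathrm{Hilb}^{k})^{-1}\|_{2}\sim K^{-1}k^{-1/2}(1{+}\sqrt{2})^{4k}$ then converts the bound $|L_{2,k+l+1}^{v}(x^{l})|\le C^{k+l+1}$ into $|v_{l,m}|\le C'(\rho^{4}C^{2})^{l+m}$, giving $v\in\mathcal{O}(B(0,\rho^{-4}C^{-2})^{2})$. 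Ironically your throwaway phrase ``moment-inversion'' is precisely this; the point is that the moment matrix is Hilbert's, and its inverse is explicitly controlled.
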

The proof of the last proposition is based on the uniform boundedness principle
and the properties of the Hilbert matrices.
\section{Formal properties of up-diffeomorphisms}
\label{sec:formal}
Let ${\rm Diff} ({\mathbb C}^{n+1},q)$ be the group of germs of complex analytic diffeomorphisms defined in
a neighborhood of $q \in {\mathbb C}^{n+1}$.
Consider coordinates $(x,x_{1},\hdots,x_{n}) \in {\mathbb C}^{n+1}$.
We define
\[ \diff{p}{n+1} = \{ \varphi \in \diff{}{n+1} : x_{j} \circ \varphi = x_{j} \
{\rm for \ any} \ 1 \leq j \leq n \}  . \]
We denote by $\diff{u}{n+1}$ the subgroup of unipotent elements of $\diff{}{n+1}$, more precisely
$\varphi \in \diff{u}{n+1}$ if $j^{1} \varphi$ is a unipotent linear isomorphism
(i.e. $j^{1} \varphi - Id$ is nilpotent).
By definition a unipotent parameterized diffeomorphism (up-diffeomorphism for shortness) is an
element of
\[ \diff{up}{n+1} \stackrel{def}{=} \diff{u}{n+1} \cap \diff{p}{n+1}. \]
Indeed up-diffeomorphisms are exactly the $n$-parameter unfoldings of tangent to the identity
diffeomorphisms. We denote
$\diffh{}{n+1}$, $\diffh{p}{n+1}$, $\diffh{u}{n+1}$ and $\diffh{up}{n+1}$
the formal completions of $\diff{}{n+1}$, $\diff{p}{n+1}$, $\diff{u}{n+1}$ and $\diff{up}{n+1}$
respectively.

  The unipotent germs of diffeomorphisms are related with nilpotent vector fields. We denote by
${\mathcal X} \cn{n+1}$ the set of germs of complex analytic
vector fields which are singular at $0$. We denote
by ${\mathcal X}_{N} \cn{n+1}$ the subset of ${\mathcal X} \cn{n+1}$ of nilpotent vector fields, i.e.
vector fields whose first jet has the unique eigenvalue $0$.
The formal completions of these spaces are denoted by
$\hat{\mathcal X} \cn{n+1}$ and $\hat{\mathcal X}_{N} \cn{n+1}$ respectively.

The expression
\begin{equation}
\label{equ:taylor2}
 {\rm exp} (t \hat{X}) = \left({
\sum_{j=0}^{\infty} \frac{t^{j}}{j!} \hat{X}^{j}(x),
\sum_{j=0}^{\infty} \frac{t^{j}}{j!} \hat{X}^{j}(x_{1}), \hdots, \sum_{j=0}^{\infty}
\frac{t^{j}}{j!} \hat{X}^{j}(x_{n}) }\right)
\end{equation}
defines the exponential of $t \hat{X}$ for $t \in {\mathbb C}$.
Let us remark that $\hat{X}^{j}(g)$ is the result of applying $j$ times the derivation
$\hat{X}$ to the power series $g$. The definition coincides
with the classical one if $\hat{X}$ is a germ of convergent vector field. For
$\hat{X}$ in $\hat{\mathcal X}_{N} \cn{n+1}$ the sums defining the components of ${\rm exp}(t \hat{X})$
converge in the Krull topology of ${\mathbb C}[[x,x_{1},\hdots,x_{n}]]$, i.e. the multiplicity at the origin of
$\hat{X}^{j}(g)$ tends to $\infty$ when $j \to \infty$ for any $g \in {\mathbb C}[[x,x_{1},\hdots,x_{n}]]$.
The next proposition is classical.
\begin{pro}
The exponential mapping ${\rm exp}$ induces a bijection from
$\hat{\mathcal X}_{N} \cn{n+1}$ onto $\diffh{u}{n+1}$.
\end{pro}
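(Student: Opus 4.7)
The plan is to construct the inverse map $\log$ explicitly as a convergent series in the Krull topology, and then verify that both compositions $\log \circ \exp$ and $\exp \circ \log$ are the identity by reducing to formal identities in the complete endomorphism algebra of $\formn$.

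First I would check that $\exp$ lands in $\diffh{u}{n+1}$. For $\hat{X} \in \hat{\mathcal X}_{N}\cn{n+1}$, the linear part of $\exp(\hat{X})$ equals the matrix exponential of $j^{1}\hat{X}$, which is unipotent since $j^{1}\hat{X}$ is nilpotent; moreover $\exp(-\hat{X})$ provides a formal inverse. For the reverse direction, let $\hat{\varphi} \in \diffh{u}{n+1}$ and consider the continuous $\mathbb{C}$-algebra automorphism $\Phi \colon \formn \to \formn$ given by $\Phi(g) = g \circ \hat{\varphi}$. The key observation is that $N := \Phi - \mathrm{Id}$ is topologically nilpotent: since $\Phi$ preserves the filtration $\{{\mathfrak m}^{k}\}$ and the induced action on each finite-dimensional quotient ${\mathfrak m}^{k}/{\mathfrak m}^{k+1}$ comes from the $k$-th symmetric power of the unipotent linear map $j^{1}\hat{\varphi}$, the operator $N$ acts nilpotently on every ${\mathfrak m}^{k}/{\mathfrak m}^{k+1}$. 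Consequently
\[
L := \sum_{j=1}^{\infty} \frac{(-1)^{j+1}}{j}\, N^{j}
\]
converges in the Krull topology to a continuous endomorphism of $\formn$ mapping ${\mathfrak m}$ into itself. The candidate generator is $\hat{X}_{\hat{\varphi}} := L(x)\,\partial/\partial x + \sum_{i=1}^{n} L(x_{i})\,\partial/\partial x_{i}$, whose linear part is the matrix logarithm of the unipotent $j^{1}\hat{\varphi}$ and hence nilpotent.

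The crucial step is to show that $L$ is a derivation of $\formn$, which is what promotes $L$ to a bona fide formal vector field in $\hat{\mathcal X}_{N}\cn{n+1}$. This is a purely algebraic fact valid in any complete Hausdorff filtered associative $\mathbb{C}$-algebra: the formal logarithm of a filtration-preserving algebra automorphism, when convergent, satisfies the Leibniz rule. One verifies it directly by expanding the identity $\Phi(fg) = \Phi(f)\Phi(g)$ in powers of $N$ and matching coefficients with the log series (or, equivalently, by differentiating the one-parameter group $\exp(tL)$ of automorphisms at $t=0$). Granted this, the identities $\log \circ \exp = \mathrm{Id}$ and $\exp \circ \log = \mathrm{Id}$ follow from the non-commutative formal power series identities $\log(1 + (\exp(U) - 1)) = U$ and $\exp(\log(1 + V)) = 1 + V$ applied in the complete endomorphism algebra of $\formn$ to the topologically nilpotent operators associated with $\hat{X}$ and with $N$ respectively. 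I expect the Leibniz rule for $L$ to be the main obstacle; the Krull-topology convergence and the two bijectivity identities are then routine consequences of the nilpotence of $j^{1}\hat{\varphi} - \mathrm{Id}$ (respectively $j^{1}\hat{X}$) on each finite-dimensional graded piece ${\mathfrak m}^{k}/{\mathfrak m}^{k+1}$.
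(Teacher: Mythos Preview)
The paper does not prove this proposition; it simply labels it ``classical'' and moves on. So there is no authorial argument to compare against, and your task reduces to giving a self-contained proof, which your outline essentially does.

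Your strategy is the standard one: identify $\diffh{u}{n+1}$ with the group of continuous filtration-preserving algebra automorphisms of $\formn$ whose induced action on each $\mathfrak m^{k}/\mathfrak m^{k+1}$ is unipotent, identify $\hat{\mathcal X}_{N}\cn{n+1}$ with the Lie algebra of topologically nilpotent continuous derivations, and then invoke the formal $\exp$/$\log$ inversion in the completed endomorphism algebra. The convergence of $L=\sum_{j\ge 1}(-1)^{j+1}N^{j}/j$ is correctly justified by the nilpotence of $N$ on each finite-dimensional quotient $R/\mathfrak m^{k}$.

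The one point you flag as the main obstacle, namely that $L$ is a derivation, deserves a sharper argument than the two sketches you give. The ``differentiate $\exp(tL)$'' suggestion is circular as stated, since a priori you only know $\exp(tL)$ is multiplicative at $t=1$. A clean fix: work modulo $\mathfrak m^{k}$, where $L$ is nilpotent and hence $t\mapsto \exp(tL)$ is polynomial in $t$; the function $t\mapsto \exp(tL)(fg)-\exp(tL)(f)\,\exp(tL)(g)$ is then a polynomial vanishing at every $t\in\mathbb Z$ (because $\Phi^{n}$ is an automorphism for all integers $n$), hence vanishes identically, and differentiating at $t=0$ gives the Leibniz rule. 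Equivalently, the unipotent automorphisms of $R/\mathfrak m^{k}$ form a unipotent algebraic group whose Lie algebra is exactly the nilpotent derivations, and $\exp$/$\log$ are mutually inverse algebraic morphisms between them. With this filled in, your proof is complete.
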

As we noted in the introduction the infinitesimal generator of a germ
of diffeomorphism is in general a divergent vector field.
\begin{defi}
Let $\varphi \in \diff{u}{n+1}$. We denote by $\log \varphi$ the unique element of  $\hat{\mathcal X}_{N} \cn{n+1}$
such that $\varphi = {\rm exp}(\log \varphi)$. We say that
$\log \varphi$ is the infinitesimal generator of $\varphi$.
\end{defi}
The previous proposition allows to transport the formal classification problem in $\diff{u}{n+1}$ to
$\hat{\mathcal X}_{N} \cn{n+1}$.
This is a simplification since $\hat{\mathcal X}_{N} \cn{n+1}$ can be interpreted as the
Lie algebra of $\diffh{u}{n+1}$.

Next we describe the nature of the infinitesimal generator of a up-diffeomorphism.
\begin{pro}
\label{pro:logfor} \cite{UPD}
Let $\varphi \in \diffh{up}{n+1}$. Then $\log \varphi$ is of the form
$\hat{u} (x \circ \varphi - x) \partial /\partial{x}$ where $\hat{u} \in {\mathbb C}[[x,x_{1},\hdots,x_{n}]]$
is a unit.
\end{pro}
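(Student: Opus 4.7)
The plan is to first establish that $\log \varphi$ has the special form $\hat{a}\,\partial/\partial x$, with $\hat{a} \in {\mathbb C}[[x,x_{1},\ldots,x_{n}]]$ satisfying $\hat{a}(0)=0$ and $(\partial \hat{a}/\partial x)(0)=0$. The cleanest route is to \emph{construct} such a nilpotent $\hat{X}' = \hat{a}\,\partial/\partial x$ order by order so that ${\rm exp}(\hat{X}')(x) = x \circ \varphi$: at each homogeneous degree the new homogeneous component of $\hat{a}$ appears linearly with coefficient $1$, while all higher terms of the exponential expansion contribute only strictly lower-order data. This recursion is solvable because unipotence combined with the parameterized condition forces $(\partial(x \circ \varphi)/\partial x)(0)=1$. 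Since ${\rm exp}(\hat{X}')$ automatically fixes $x_{1},\ldots,x_{n}$, and since the preceding proposition asserts uniqueness of the infinitesimal generator, we must have $\hat{X}' = \log \varphi$.

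Next, with $\hat{X} := \log \varphi = \hat{a}\,\partial/\partial x$ in hand, the goal becomes to factor $\hat{a}$ out of the exponential series expressing $x \circ \varphi - x$. A straightforward induction on $j \ge 1$ shows that every iterate $\hat{X}^{j}(x)$ is divisible by $\hat{a}$: the base case is $\hat{X}(x) = \hat{a}$, and the step follows from $\hat{X}^{j}(x) = \hat{a}\,\partial(\hat{X}^{j-1}(x))/\partial x$. Writing $\hat{X}^{j}(x) = \hat{a}\,g_{j}$, equation \eqref{equ:taylor2} then yields
\[ x \circ \varphi - x \;=\; \sum_{j \ge 1} \frac{\hat{X}^{j}(x)}{j!} \;=\; \hat{a}\,\hat{u}, \qquad \hat{u} := \sum_{j \ge 1} \frac{g_{j}}{j!}. \]
Convergence of the series defining $\hat{u}$ in the Krull topology follows from that of the original exponential expansion combined with the multiplicativity of the order at the origin on ${\mathbb C}[[x,x_{1},\ldots,x_{n}]]$ applied to $\hat{X}^{j}(x)=\hat{a}\,g_{j}$.

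It then remains only to verify that $\hat{u}$ is a unit. Using the recursion $g_{j} = (\partial \hat{a}/\partial x)\,g_{j-1} + \hat{a}\,\partial g_{j-1}/\partial x$ with $g_{1} \equiv 1$, together with $\hat{a}(0)=0$ and $(\partial \hat{a}/\partial x)(0)=0$, one obtains inductively $g_{j}(0)=0$ for every $j \ge 2$. Hence $\hat{u}(0) = 1$ and $\hat{u}$ is a unit, so inverting gives $\hat{a} = \hat{u}^{-1}(x \circ \varphi - x)$, which substituted into $\log \varphi = \hat{a}\,\partial/\partial x$ yields the desired expression. The main obstacle in this plan is the uniqueness step of the first paragraph: showing that every nilpotent preimage of $\varphi$ under ${\rm exp}$ must take the form $\hat{a}\,\partial/\partial x$ is delicate without invoking the cited bijectivity of ${\rm exp}$, since \emph{a priori} the components $\hat{X}(x_{j})$ for $j \ge 1$ could be nonzero and cancel only through mixing in the exponential expansion.
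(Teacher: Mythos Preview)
Your argument is correct. Note that the paper does not actually prove this proposition; it is cited from \cite{UPD}, and the text following the statement offers only a two-line geometric sketch: $\log\varphi$ is collinear to $\partial/\partial x$ because $\varphi$ (and hence $\log\varphi$) preserves the fibers of $dx_{1}=\cdots=dx_{n}=0$, and the singular locus of $\log\varphi$ coincides with $Fix(\varphi)$.

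Your route is genuinely different from that sketch on both points. For the collinearity, instead of arguing directly that $(\log\varphi)(x_{j})=0$ from $x_{j}\circ\varphi=x_{j}$ (which requires a small order argument to rule out cancellations in the exponential series, as you yourself flag), you build a candidate $\hat{X}'=\hat a\,\partial/\partial x$ by a degree-by-degree recursion and then invoke the bijectivity of $\exp$ stated in the preceding proposition; this is clean and avoids the delicate step entirely, at the cost of relying on that cited bijection. For the unit claim, the paper's phrase ``the singular set of $\log\varphi$ coincides with $Fix(\varphi)$'' is really an ideal-theoretic statement $(\hat a)=(x\circ\varphi-x)$ rather than a set-theoretic one, and your explicit factoring $\hat X^{j}(x)=\hat a\,g_{j}$ together with the recursion $g_{j}=\hat a' g_{j-1}+\hat a\,g_{j-1}'$ and $g_{j}(0)=0$ for $j\ge 2$ gives this directly and transparently. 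The trade-off is that the paper's sketch points to the underlying geometry (foliation invariance, coincidence of zero schemes), while your approach is self-contained and makes the unit $\hat u(0)=1$ visible without any appeal to divisor arguments.
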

This proposition is a consequence of the geometrical nature of the mappings ${\rm exp}$ and $\log$.
More precisely $\log \varphi$ is collinear to $\partial / \partial{x}$ since
$\varphi$ and then $\log \varphi$
preserve the leaves of the
foliation $dx_{1}= \hdots = dx_{n}=0$. Moreover the singular set of $\log \varphi$
coincides with $Fix (\varphi)$.

  The fixed points set $Fix (\varphi)$ of $\varphi \in \diff{up}{n+1}$ is a hypersurface.
Consider the determinant $|Jac \ \varphi|$ of the jacobian matrix of $\varphi$.
\begin{defi}
We say that
an irreducible component $\gamma$ of $Fix (\varphi)$ is unipotent if $|Jac \ \varphi|_{|\gamma} \equiv 1$.
\end{defi}
  An element $\varphi \in \diff{up}{n+1}$ is defined in some open neighborhood $U$ of the origin.
Thus $\varphi$ induces an element $\varphi_{q} \in {\rm Diff}_{p} ({\mathbb C}^{n+1},q)$
for any $q \in U \cap Fix (\varphi)$. Moreover $\varphi$ belongs to
${\rm Diff}_{up} ({\mathbb C}^{n+1},q)$ if $q$ belongs to a unipotent irreducible component
of $Fix (\varphi)$.
Since the mapping $\log$ is of
geometrical type it is natural to expect an extension of $\log \varphi$ to $Fix (\varphi)$.
Now, we introduce some definitions providing the context to describe this phenomenon.

  The formal completion of a complex space $(U,{\mathcal O}(U))$ ($U$ is a topological space
and ${\mathcal O}(U)$ is its sheaf of analytic functions) along a sub-variety $V$ given
by a sheaf of ideals $I$ is the space $(U,\hat{\mathcal O}_{I}(U))$ where
\[ \hat{\mathcal O}_{I}(U) = \lim_{\leftarrow} \frac{{\mathcal O}(U)}{{I}^{j}} . \]
Consider a series $\hat{g} \in {\mathbb C}[[x,x_{1},\hdots,x_{n}]]$ and
a germ of analytic variety $V \subset {\mathbb C}^{n+1}$ at $0$ given by an ideal
$I$. Then $\hat{g}$ is
\begin{itemize}
\item {\it transversally formal} along $V$ if
$\hat{g} \in \lim_{\leftarrow} \convn /I^{j}$.  \\
\item {\it uniformly transversally formal} along $V$ if $\hat{g} \in \hat{\mathcal O}_{I}(U)$
for some neighborhood of the origin $U$. \\
\end{itemize}
For shortness we say that $\hat{g}$ is t.f. along $V$ in the former case whereas
$\hat{g}$ is u.t.f. along $V$ in the latter case.
\begin{defi}
We say that $\hat{X} \in \hat{\mathcal X} \cn{n+1}$
is t.f. along $V$ if $\hat{X}(x)$, $\hat{X}(x_{1})$, $\hdots$, $\hat{X}(x_{n})$ are
t.f. along $V$. There is an analogous definition for $\hat{\sigma} \in \diffh{}{n+1}$
by considering $x \circ \hat{\sigma}$, $x_{1} \circ \hat{\sigma}$, $\hdots$,
$x_{n} \circ \hat{\sigma}$.
The definitions of uniform formal transversality for formal diffeomorphisms and vector fields
are analogous.
\end{defi}
\begin{defi}
\label{def:g}
Consider a set $W \subset {\mathbb C}^{n+1}$. We can define the ring $G_{W}$ of germs
of holomorphic functions defined in a neighborhood of $W$.
\end{defi}
Next lemma provides a handy characterization of u.t.f. functions.
\begin{lem}
\label{lem:handy}
\cite{UPD}
Fix $\hat{g} \in \formn$ and
a germ of analytic variety $V \subset {\mathbb C}^{n+1}$ at $0$ given by an ideal
$I$. Then $\hat{g}$ is u.t.f. along $V$ if and only if
$\hat{g} \in \lim_{\leftarrow} G_{V \cap W}/I^{j}$ for some neighborhood $W$ of the origin.
\end{lem}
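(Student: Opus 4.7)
The plan is to prove the two implications separately, with the content concentrated in the reverse direction.

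The direction ($\Rightarrow$) is immediate from the natural inclusion $\mathcal{O}(U)\hookrightarrow G_{V\cap U}$: a compatible family $g_{j}\in\mathcal{O}(U)$ representing $\hat g\in\hat{\mathcal{O}}_{I}(U)=\lim_{\leftarrow}\mathcal{O}(U)/I^{j}\mathcal{O}(U)$ yields a compatible family in $G_{V\cap U}/I^{j}$ representing $\hat g$, so one may take $W=U$.

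For the converse, suppose $\hat g\in\lim_{\leftarrow}G_{V\cap W}/I^{j}$ and choose compatible representatives $g_{j}\in G_{V\cap W}$, each realised as a holomorphic function on some neighborhood $N_{j}$ of $V\cap W$ with $g_{j+1}-g_{j}$ lying in $I^{j}\mathcal{O}(N_{j}')$ for some smaller neighborhood $N_{j}'\subset N_{j}\cap N_{j+1}$. The difficulty is that $N_{j}$ may shrink with $j$. I plan to fix a Stein polydisc $U\subset W$ centered at $0$ on which generators $f_{1},\ldots,f_{r}$ of $I$ are defined, and to produce $\tilde g_{j}\in\mathcal{O}(U)$ satisfying $\tilde g_{j+1}-\tilde g_{j}\in I^{j}\mathcal{O}(U)$ and $\hat g-\tilde g_{j}\in I^{j}\formn$. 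The key fact is that the coherent sheaf $\mathcal{O}/\mathcal{I}^{j}$, where $\mathcal{I}$ is the ideal sheaf of $V$, is supported on $V$, so its stalks --- and hence its sections --- vanish on $U\setminus V$. Consequently the class $[g_{j}]\in(\mathcal{O}/\mathcal{I}^{j})(N_{j})$ glues with the zero section on $U\setminus V$ (compatibility on $N_{j}\setminus V$ is automatic since the target sheaf is zero there) to a global section on $U$. By Cartan's Theorem B, $H^{1}(U,\mathcal{I}^{j})=0$ and this section lifts to some $\tilde g_{j}\in\mathcal{O}(U)$, while Theorem A identifies $\mathcal{I}^{j}(U)=I^{j}\mathcal{O}(U)$.

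Two properties of the sequence $(\tilde g_{j})$ then remain to verify. Taking germs at the origin, the congruence $\tilde g_{j}\equiv g_{j}\pmod{\mathcal{I}^{j}}$ near $V\cap U$ together with $\mathcal{I}^{j}_{0}=I^{j}\convn$ gives $\hat g-\tilde g_{j}\in I^{j}\formn$. For compatibility, the image of $\tilde g_{j+1}-\tilde g_{j}$ in $(\mathcal{O}/\mathcal{I}^{j})(U)=\mathcal{O}(U)/I^{j}\mathcal{O}(U)$ agrees with $[g_{j+1}]-[g_{j}]=0$ on a neighborhood of $V\cap U$ and vanishes on $U\setminus V$, hence vanishes globally, i.e.\ $\tilde g_{j+1}-\tilde g_{j}\in I^{j}\mathcal{O}(U)$. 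Thus $(\tilde g_{j})$ is Cauchy in the $I$-adic topology of $\mathcal{O}(U)$, defines an element of $\hat{\mathcal{O}}_{I}(U)$ whose image in $\formn$ is $\hat g$ (using Krull's intersection theorem in $\formn$ to compare limits), and this shows that $\hat g$ is u.t.f.\ along $V$. The main technical obstacle is precisely this coherence-based extension-by-zero step and the identification $\mathcal{I}^{j}(U)=I^{j}\mathcal{O}(U)$; both rely crucially on choosing $U$ Stein with the generators of $I$ global on $U$.
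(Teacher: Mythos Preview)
The paper does not prove this lemma; it is quoted from \cite{UPD} without argument, so there is nothing in the present paper to compare your proof against.

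Your proposal is correct and is the standard cohomological route. The forward direction is immediate. For the reverse, the key point that the coherent sheaf $\mathcal{O}/\mathcal{I}^{j}$ is supported on $V$ makes the extension-by-zero step work, and Cartan's Theorem~B on a Stein polydisc $U$ then provides both the surjectivity of $\mathcal{O}(U)\to(\mathcal{O}/\mathcal{I}^{j})(U)$ and the identification $\mathcal{I}^{j}(U)=I^{j}\mathcal{O}(U)$. One small correction: this last identification is not Theorem~A but again a consequence of Theorem~B, applied to the kernel of the surjection $\mathcal{O}^{N}\to\mathcal{I}^{j}$ given by the degree-$j$ monomials in the chosen generators $f_{1},\ldots,f_{r}$; vanishing of $H^{1}$ of that kernel shows every global section of $\mathcal{I}^{j}$ over $U$ is an $\mathcal{O}(U)$-combination of those monomials. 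The compatibility check for $(\tilde g_{j})$ and the use of $\bigcap_{j}I^{j}\formn\subset\bigcap_{j}\mathfrak{m}^{j}\formn=0$ to recover $\hat g$ are fine.
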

We describe the t.f. behavior of
$\log \varphi$ for $\varphi \in \diff{up}{n+1}$.
\begin{pro}
\label{pro:logtra}
\cite{UPD}
Let $\varphi \in \diff{up}{n+1}$.
Then $\log \varphi$ is t.f. along $Fix (\varphi)$. Moreover $\log \varphi$ is u.t.f.
along $\gamma$ for every
unipotent irreducible component $\gamma$ of $Fix (\varphi)$.
\end{pro}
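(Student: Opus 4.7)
By proposition~\ref{pro:logfor}, $\log \varphi = \hat{u} f\, \px$ where $f = x\circ\varphi - x \in \convn$ and $\hat{u} \in \formn$ is a formal unit; moreover $\log \varphi$ is fibered, i.e.\ $\log \varphi(x_i)=0$ for every $1\le i\le n$. Let $I = I(Fix(\varphi))$ in $\convn$. Since $f$ is convergent, it suffices to prove that $\hat{u}$ is t.f.\ along $Fix (\varphi)$, and u.t.f.\ along every unipotent irreducible component.

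For the t.f.\ claim the plan is to construct inductively convergent representatives $u_0, u_1, \ldots \in \convn$ with $u_j \in I^{j}$ and $\hat{u} - (u_0 + \cdots + u_{k-1}) \in I^{k}$ for every $k$. The recursion comes from the functional equation $\mathrm{exp}(\hat{u} f\, \px)(x) = x + f$: expanding via~(\ref{equ:taylor2}),
\[
 f = \sum_{j\ge 1} \frac{(\hat{u} f\, \px)^{j}(x)}{j!} ,
\]
each iterate $(\hat{u} f\, \px)^{j}(x)$ is a universal polynomial in $\hat{u}$, $f$ and their $x$-derivatives. Reducing modulo $I^{k+1}$ one obtains a polynomial equation whose leading unknown is the $I^{k}/I^{k+1}$-piece of $\hat{u}$, with coefficients that are polynomial expressions in $f$ and in the previously chosen $u_0,\ldots,u_{k-1}$. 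Solving requires only finite algebraic manipulation of convergent data, so a convergent $u_k \in I^{k}$ can be picked at every step, establishing t.f.

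For u.t.f.\ along a unipotent component $\gamma$ (so $(\parx{f})_{|\gamma} \equiv 0$) I would apply lemma~\ref{lem:handy}: it is enough to choose the $u_k$ in $G_{\gamma\cap W}$ for one common neighborhood $W$ of the origin. The only obstruction in the previous recursion would be a denominator vanishing on $\gamma$; a direct inspection of the first step shows the natural candidate is a polynomial in $(\parx{f})_{|\gamma}$, and analogous quantities appear at higher orders. Because $\gamma$ is unipotent, such quantities vanish identically on $\gamma$; however, the matching right-hand sides also lie in the appropriate power of $I(\gamma)$, so the equations reduce to identities solvable by convergent $u_k$ defined on the same open set as $f$. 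Iterating, all $u_k$ belong to $G_{\gamma\cap W}$ for a fixed $W$, and lemma~\ref{lem:handy} yields u.t.f.

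The central difficulty is the u.t.f.\ assertion, where one must track the domains of the $u_k$ uniformly across all levels of the induction and verify that no vanishing denominator is actually hit. A cleaner conceptual route (the one followed in \cite{UPD}) bypasses the formal recursion altogether by invoking the unfolded sectorial normalization theory of Ecalle--Voronin, Martinet--Ramis, Mardesic--Rousseau--Roussarie and Oudkerk: one builds fibered analytic vector fields $X_{(\ell)} = a_{(\ell)} f\, \px$ on sectorial domains $V_{(\ell)}$ whose union covers a full neighborhood of each unipotent $\gamma$ and that satisfy $\mathrm{exp}(X_{(\ell)}) = \varphi$ on $V_{(\ell)}$; the t.f.\ and u.t.f.\ properties of $\hat{u}$ then follow from the Gevrey asymptotic behavior of the coefficients $a_{(\ell)}$ along $Fix (\varphi)$ and the uniformity of the sectorial covering along $\gamma$.
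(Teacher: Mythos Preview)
The present paper does not prove this proposition; it is imported from \cite{UPD} and used as a black box. Your final paragraph is an accurate summary of the argument actually given there: one constructs sectorial analytic vector fields $X_{(\ell)}$ with ${\rm exp}(X_{(\ell)})=\varphi$ via the unfolded \'Ecalle--Martinet--Ramis theory, and the t.f.\ and u.t.f.\ statements are read off from the Gevrey asymptotics of the $X_{(\ell)}$ together with the uniformity of the sectorial cover along each unipotent component.

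Your proposed direct recursion, by contrast, is a sketch with a real gap. The claim that reducing modulo $I^{k+1}$ leaves a \emph{polynomial} equation solvable by ``finite algebraic manipulation of convergent data'' is not correct: writing $B_j=(\hat u f\,\partial/\partial x)^{j}(x)/f$ one finds $B_{j+1}=\hat u\bigl((\partial f/\partial x)\,B_j+f\,\partial B_j/\partial x\bigr)$, hence $B_j\equiv \hat u^{\,j}(\partial f/\partial x)^{\,j-1}$ modulo $(f)$. On a non-unipotent component $\partial f/\partial x$ does not vanish, so every term of the exponential series already contributes at level $k=0$, and the relation for $\hat u$ restricted to $Fix(\varphi)$ is the transcendental identity $(e^{\hat u\,\partial f/\partial x}-1)/(\partial f/\partial x)=1$, not a polynomial one. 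It does admit the explicit analytic solution $\hat u\equiv\log(1+\partial f/\partial x)/(\partial f/\partial x)$, but extracting it is a genuine computation, and the analogous step at each higher level $k$ likewise involves the full series. The u.t.f.\ paragraph is vaguer still: you assert that the would-be denominators vanish on a unipotent $\gamma$ and that the right-hand sides compensate, but if the coefficient of the unknown $u_k$ vanishes the equation no longer determines $u_k$, and no mechanism is given for the uniform control of domains that lemma~\ref{lem:handy} requires. If this elementary route can be made rigorous it would be a genuinely different proof from \cite{UPD}, but what you have written is a plan rather than an argument.
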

Let us focus next in the formal classification of up-diffeomorphisms.
Let $\varphi \in \diff{up}{n+1}$.
Denote $f=x \circ \varphi - x$ and $\hat{u} = (\log \varphi)(x)/f$.
We consider the dual form $\hat{\Omega}_{\varphi} = dx/(\hat{u} f)$.
By the uniqueness of the infinitesimal generator
we have that conjugating elements of $\diff{up}{n+1}$ is equivalent
to conjugating their dual forms.
By the previous proposition we can choose $u_{\varphi}$ in ${\mathbb C}\{x,x_{1},\hdots,x_{n}\}$
such that $\hat{u} - u_{\varphi} \in (f)$. We denote $\Omega_{\varphi} = 1/ (u_{\varphi}f)$.
The dual form $\hat{\Omega}_{\varphi}$ is of the form
\[ \hat{\Omega}_{\varphi} = \frac{dx}{u_{\varphi} f} +
\frac{u_{\varphi} - \hat{u}}{f} \frac{1}{u_{\varphi} \hat{u}} dx . \]
Since $\hat{\Omega}_{\varphi} - {\Omega}_{\varphi}$ does not have poles then
\begin{lem}
\label{lem:fornorr}
\cite{UPD}
Let $\varphi \in \diff{up}{n+1}$. Then the diffeomorphisms
${\rm exp}(u_{\varphi}(x \circ \varphi - x)\partial/\partial{x})$ and $\varphi$ are formally conjugated
by some $\hat{\sigma} \in \diffh{p}{n+1}$ such that
$x \circ \hat{\sigma} - x \in (x \circ \varphi - x)$.
\end{lem}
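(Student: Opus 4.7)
The plan is a Moser-type deformation argument along a one-parameter family of infinitesimal generators. By Proposition~\ref{pro:logfor}, $\log\varphi=\hat{u}f\,\partial/\partial x$ with $f=x\circ\varphi-x$ and $\hat{u}\in\formn$ a formal unit, and by construction $\hat{u}-u_{\varphi}=f\hat{w}$ for some $\hat{w}\in\formn$. I would interpolate via $\hat{u}_{t}:=u_{\varphi}+tf\hat{w}$ and $X_{t}:=\hat{u}_{t}f\,\partial/\partial x$ for $t\in[0,1]$; each $\hat{u}_{t}$ remains a unit since $\hat{u}_{t}(0)=u_{\varphi}(0)\neq 0$, so ${\rm exp}(X_{0})={\rm exp}(u_{\varphi}f\,\partial/\partial x)$ and ${\rm exp}(X_{1})=\varphi$. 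The goal is to construct a family $\hat{\sigma}_{t}\in\diffh{p}{n+1}$ with $\hat{\sigma}_{0}={\rm Id}$, $(\hat{\sigma}_{t})_{*}X_{0}=X_{t}$, and $x\circ\hat{\sigma}_{t}-x\in(f)$, and then take $\hat{\sigma}:=\hat{\sigma}_{1}$.

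I realize $\hat{\sigma}_{t}$ as the formal flow of a time-dependent vector field $Z_{t}=\hat{\alpha}_{t}\,\partial/\partial x$; since $Z_{t}$ annihilates $x_{1},\ldots,x_{n}$, the resulting $\hat{\sigma}_{t}$ lies in $\diffh{p}{n+1}$ automatically. Differentiating $(\hat{\sigma}_{t})_{*}X_{0}=X_{t}$ in $t$ yields the standard Moser identity $\partial_{t}X_{t}=-[Z_{t},X_{t}]$. Expanding the bracket of two vector fields along $\partial/\partial x$ and dividing by $(\hat{u}_{t}f)^{2}$ rewrites this as
\[
\frac{\partial}{\partial x}\!\left(\frac{\hat{\alpha}_{t}}{\hat{u}_{t}f}\right)\ =\ \frac{\hat{w}}{\hat{u}_{t}^{2}}.
\]
The right-hand side lies in $\formn$; its lack of poles along $f=0$ is exactly the hypothesis $\hat{u}-u_{\varphi}\in(f)$. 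I then solve by termwise formal integration in $x$ and set
\[
\hat{\alpha}_{t}\ :=\ \hat{u}_{t}\,f\,\int_{0}^{x}\frac{\hat{w}(\xi,x_{1},\ldots,x_{n})}{\hat{u}_{t}(\xi,x_{1},\ldots,x_{n})^{2}}\,d\xi.
\]
Since the antiderivative vanishes on $\{x=0\}$, one has $\hat{\alpha}_{t}\in(f)\cap\mathfrak{m}^{2}$.

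The inclusion $\hat{\alpha}_{t}\in\mathfrak{m}^{2}$ forces the linear part of $Z_{t}$ to vanish, so iterates of $Z_{t}$ strictly raise the $\mathfrak{m}$-adic order and the non-autonomous equation $\partial_{t}\hat{\sigma}_{t}=Z_{t}\circ\hat{\sigma}_{t}$, $\hat{\sigma}_{0}={\rm Id}$, can be integrated in the Krull topology of $\formn$ (modulo each $\mathfrak{m}^{k}$ only finitely many Picard iterates matter and the result depends polynomially on $t$). Moreover $Z_{t}(f)=\hat{\alpha}_{t}\,\partial f/\partial x\in(f)$, so $(d/dt)(f\circ\hat{\sigma}_{t})\in(f\circ\hat{\sigma}_{t})$ forces $f\circ\hat{\sigma}_{t}\in(f)$ throughout $[0,1]$. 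Writing $\hat{\alpha}_{t}=f\hat{g}_{t}$ one obtains
\[
\frac{d}{dt}(x\circ\hat{\sigma}_{t})\ =\ \hat{\alpha}_{t}\circ\hat{\sigma}_{t}\ =\ (f\circ\hat{\sigma}_{t})(\hat{g}_{t}\circ\hat{\sigma}_{t})\ \in\ (f),
\]
and integration in $t$ yields $x\circ\hat{\sigma}_{1}-x\in(f)$, as the lemma demands.

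The only real obstacle is the Krull-topology bookkeeping for the time-dependent flow: checking that Picard iteration and $t$-integration both commute with the inverse-limit topology modulo every power of $\mathfrak{m}$. Once that is nailed down, the heart of the proof collapses to the single pole-free integration in $x$ made possible by $\hat{u}-u_{\varphi}\in(f)$, which is exactly the observation ``$\hat{\Omega}_{\varphi}-\Omega_{\varphi}$ has no poles'' recorded just before the lemma.
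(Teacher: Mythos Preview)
Your argument is correct and is essentially the path-method proof the paper has in mind: the paper records only the key observation that $\hat{\Omega}_{\varphi}-\Omega_{\varphi}$ has no poles and cites \cite{UPD}, but the mechanism is the one spelled out in the sufficient-condition proof of Theorem~\ref{teo:cartra}. The only cosmetic differences are that the paper interpolates linearly in the dual forms (so $X_{1+z}=\frac{\hat{u}_{1}\hat{u}_{2}}{z\hat{u}_{1}+(1-z)\hat{u}_{2}}\,f\,\partial/\partial x$), which makes the homological equation $z$ times a fixed equation and lets one use a single $\hat{\alpha}$ together with the autonomous commutation $[\,\hat{\alpha}X_{1+z}+\partial_{z},\,X_{1+z}\,]=0$, whereas you interpolate linearly in the unit and carry a $t$-dependent $\hat{\alpha}_{t}$ through a non-autonomous flow; since here $\hat{u}(0)=u_{\varphi}(0)$, both interpolations are nonsingular on $[0,1]$ and the two formulations are equivalent.
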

\begin{defi}
A germ of analytic set $\gamma \subset ({\mathbb C}^{n+1},0)$ is {\it fibered} if it is
the union of orbits of $\partial/\partial{x}$.
We say that $\hat{\sigma} \in \diffh{p}{n+1}$ is {\it normalized} with respect
to analytic set $\beta \subset ({\mathbb C}^{n+1},0)$ if
 $x \circ \hat{\sigma} - x \in I(\gamma)$ for any non-fibered irreducible component
 $\gamma$ of $\beta$.
\end{defi}
\begin{pro}
\label{pro:action}
\cite{UPD}
Let $\varphi_{1},\varphi_{2} \in \diff{up}{n+1}$ be formally conjugated. Then there exist
$\hat{\sigma} \in \diffh{p}{n+1}$  
and $\sigma \in \diff{}{n+1}$ such that
$(\hat{\sigma} \circ \sigma) \circ \varphi_{1} = \varphi_{2} \circ (\hat{\sigma} \circ \sigma)$
and $\hat{\sigma}$ is normalized with respect to $Fix (\varphi_{2})$.
\end{pro}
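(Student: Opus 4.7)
The plan is to take any formal conjugation $\hat{\tau}$ between $\varphi_{1}$ and $\varphi_{2}$ and extract from it a convergent factor on the right. Specifically, I look for $\sigma \in \diff{}{n+1}$ such that $\hat{\sigma} := \hat{\tau} \circ \sigma^{(-1)}$ lies in $\diffh{p}{n+1}$ (i.e.\ $x_{j} \circ \hat{\sigma} = x_{j}$ for every $j \geq 1$) and satisfies $x \circ \hat{\sigma} - x \in I(\gamma)$ for every non-fibered irreducible component $\gamma$ of $Fix(\varphi_{2})$. Equivalently, $\sigma$ must match $\hat{\tau}$ exactly on the parameters $x_{1},\ldots,x_{n}$ and, along each non-fibered $\gamma \subset Fix(\varphi_{2})$, on the $x$-coordinate.

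The key observation is that the parameter components $x_{j} \circ \hat{\tau}$ are formal $\varphi_{1}$-invariants: the intertwining relation $\hat{\tau} \circ \varphi_{1} = \varphi_{2} \circ \hat{\tau}$ together with $x_{j} \circ \varphi_{2} = x_{j}$ gives $(x_{j} \circ \hat{\tau}) \circ \varphi_{1} = x_{j} \circ \hat{\tau}$. On a generic fiber of $dx_{1} = \cdots = dx_{n} = 0$, the restriction of $\varphi_{1}$ is a non-trivial tangent-to-identity 1-dimensional diffeomorphism whose only formal invariants are constants, so outside fibered components of $Fix(\varphi_{1})$ each $x_{j} \circ \hat{\tau}$ depends only on $(x_{1},\ldots,x_{n})$. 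Proposition \ref{pro:logtra} asserts that $\log \varphi_{1}$ is t.f.\ along $Fix(\varphi_{1})$ and u.t.f.\ along its unipotent components, and lemma \ref{lem:handy} translates this semi-convergence into analytic representatives: one obtains an analytic $\sigma \in \diff{}{n+1}$ whose $j$-th component agrees with $x_{j} \circ \hat{\tau}$ modulo $I(Fix(\varphi_{1}))$ for each $j \geq 1$ and whose $x$-component agrees with $x \circ \hat{\tau}$ modulo $I(\gamma)$ for every non-fibered $\gamma \subset Fix(\varphi_{2})$. Since ${\mathbb C}[[x_{1},\ldots,x_{n}]] \cap I(Fix(\varphi_{1})) = \{0\}$ in the absence of fibered components (the projection of $Fix(\varphi_{1})$ to parameter space is then dominant), the parameter matching becomes exact; any residual discrepancy coming from fibered components of $Fix(\varphi_{1})$ is absorbed by post-composing $\hat{\sigma}$ with an element of the centralizer of $\varphi_{2}$ in $\diffh{}{n+1}$, available through the formal flow $\exp(t \log \varphi_{2})$, whose transverse formality along $Fix(\varphi_{2})$ leaves room to correct the parameters without spoiling the normalization on non-fibered components.

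The main obstacle is the semi-convergence step: formal $\varphi_{1}$-invariants do not a priori admit analytic representatives, and the transverse formal structure of $\log \varphi_{1}$ along $Fix(\varphi_{1})$ supplied by proposition \ref{pro:logtra} is the crucial input, as it provides analytic approximations exact modulo $I(Fix(\varphi_{1}))^{k}$ for every $k$. Fibered components of $Fix(\varphi_{1})$ are delicate because the dynamics restricts to the identity on each such fiber, so every formal function is trivially $\varphi_{1}$-invariant and $\hat{\tau}$ is dynamically unconstrained there; the analytic $\sigma$ on those components must be chosen compatibly with the final centralizer correction. Once the semi-convergence and the analytic extension are in place, verifying that the various compositions live in the claimed groups and that the normalization survives is a routine algebraic check.
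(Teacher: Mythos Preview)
First, note that the paper does not give a proof of this proposition; it is quoted from \cite{UPD}. There is therefore no proof in the present paper to compare against, and I assess your argument on its own merits.

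Your central step does not work. You claim that proposition~\ref{pro:logtra} and lemma~\ref{lem:handy} yield an analytic $\sigma$ whose parameter components agree with $x_{j}\circ\hat\tau$ modulo $I(Fix(\varphi_{1}))$. But those two results concern the transversal formality of $\log\varphi_{1}$ along $Fix(\varphi_{1})$; they say nothing whatsoever about the formal conjugation $\hat\tau$, which is a different object. Concretely, the claim is false: take $n=1$, $\varphi_{1}=\varphi_{2}=(x+x^{2},x_{1})$, and $\hat\tau=(x,\hat g(x_{1}))$ for any divergent $\hat g\in\widehat{\rm Diff}({\mathbb C},0)$. Then $\hat\tau$ commutes with $\varphi_{1}$, the fixed set $\{x=0\}$ has no fibered component, yet $x_{1}\circ\hat\tau=\hat g(x_{1})$ is divergent. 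No analytic function can agree with $\hat g(x_{1})$ modulo $(x)$, since restriction to $x=0$ would force convergence. Your subsequent ``exact parameter matching'' deduction (which, in the absence of fibered components, would force $x_{j}\circ\hat\tau$ to converge) thus leads to a contradiction. The centralizer correction you sketch is invoked only for fibered components and does not cover this example.

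The upshot is that an \emph{arbitrary} formal conjugation $\hat\tau$ need not factor as $\hat\sigma\circ\sigma$ with $\sigma$ convergent and $\hat\sigma\in\diffh{p}{n+1}$; the proposition is an existence statement, and its proof must construct a particular conjugation with this shape rather than split a given one. A viable route, suggested by the surrounding results, is to use lemma~\ref{lem:fornorr} to replace each $\varphi_{i}$ by the convergent normal form ${\rm exp}(u_{\varphi_{i}}(x\circ\varphi_{i}-x)\partial/\partial x)$ via a normalized element of $\diffh{p}{n+1}$, and then to show that these two convergent normal forms are \emph{analytically} conjugate; the analytic factor $\sigma$ arises from that last step, not from semi-convergence of an arbitrary $\hat\tau$.
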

The proposition answers question (\ref{que:first}).
It implies that up to an analytic change of coordinates
the formal conjugations are normalized with respect to
$Fix (\varphi_{1})=Fix (\varphi_{2})$. Then we can
suppose that $\varphi_{1}$ and $\varphi_{2}$ belong to
\[ {\mathcal D}_{f} =\{ \varphi \in \diff{up}{n+1} : (x \circ \varphi -x)/f \ {\rm is \ a \ unit} \} \]
for some $f \in {\mathbb C}\{x,x_{1},\hdots,x_{n}\}$, for instance we can choose
$f=x \circ \varphi_{1}-x$.
The elements of ${\mathcal D}_{f}$ are the unfoldings whose fixed points set is $f=0$.
When conjugating elements of ${\mathcal D}_{f}$ normalized stands for normalized
with respect to $f=0$. We focus on formal normalized conjugations from now on.

We linearize the (normalized) formal conjugacy problem by
expressing the formal properties in terms of the infinitesimal generator.
Below we explain that the existence of a normalized formal conjugation is equivalent to
the existence of a meromorphic solution of an ordinary differential equation
with prescribed poles.

  Let $f \in {\mathbb C}\{x,x_{1},\hdots,x_{n}\}$.
Let $\prod_{j=1}^{p} f_{j}^{l_{j}} \prod_{k=1}^{s} F_{k}^{m_{k}}$ be the decomposition
of $f$ in irreducible factors; we suppose that $f_{j}=0$ is non-fibered for $1 \leq j \leq p$ whereas
$F_{k}=0$ is fibered for $1 \leq k \leq s$. We denote $f_{N} = \prod_{j=1}^{p} f_{j}^{l_{j}}$ and
$f_{F} = \prod_{k=1}^{s} F_{k}^{m_{k}}$. We choose $f_{F} \in {\mathbb C}\{x_{1},\hdots,x_{n}\}$.
The functions $f_{N}$ and $f_{F}$ are well defined up to
multiplicative units.

  Consider the equivalence relation ${\sim}_{*}$ in $ {\mathcal D}_{f}$ given by
$\varphi_{1} {\sim}_{*} \varphi_{2}$ if $\varphi_{1}$ and $\varphi_{2}$ are conjugated by
a normalized element $\hat{\sigma}$ of $\diffh{p}{n+1}$.
If there is a choice of $\hat{\sigma}$ such that $\hat{\sigma}$ is t.f. along $f=0$
we denote $\varphi_{1} \stackrel{t}{\sim}_{*} \varphi_{2}$.
If we can choose $\hat{\sigma}$ to be also u.t.f. along the unipotent components of $f=0$ we denote
$\varphi_{1} \stackrel{ut}{\sim}_{*} \varphi_{2}$. The classes of
the relation ${\sim}_{*}$ are connected in the compact-open topology. As a consequence
we can use the method of the path to obtain the invariants for
the equivalence relation ${\sim}_{*}$. Given
$\varphi_{1}, \varphi_{2} \in {\mathcal D}_{f}$ we define the {\it homological equation}
\[ \frac{\partial \alpha}{\partial{x}} = \hat{\Omega}_{\varphi_{1}} - \hat{\Omega}_{\varphi_{2}} =
\left({ \frac{1}{\hat{u}_{1}} - \frac{1}{\hat{u}_{2}} }\right) \frac{1}{f} \]
where $\log \varphi_{j} = \hat{u}_{j} f \partial / \partial x$
for $j \in \{1,2\}$. Consider the decomposition
$\prod_{j=1}^{p} f_{j}^{l_{j}}$ of $f_{N}$ in irreducible factors.
\begin{defi}
\label{def:spec}
We say that the homological equation associated to
$\varphi_{1}, \varphi_{2}  \in {\mathcal D}_{f}$ is special with respect to $f$ if there exists a
solution of the form $\hat{\beta} / (f_{F} \prod_{j=1}^{p} f_{j}^{l_{j}-1})$
where $\hat{\beta} \in {\mathbb C}[[x,x_{1},\hdots,x_{n}]]$.
This solution is called special with respect to $f$. A convergent special equation
$\partial{\alpha}/\partial{x}=K/f$ ($K \in \convn$) has a convergent special solution
\cite{UPD}.
\end{defi}
\begin{teo}
\cite{UPD}
\label{teo:UPD}
Let $\varphi_{1},\varphi_{2} \in {\mathcal D}_{f}$. Then $\varphi_{1} {\sim}_{*} \varphi_{2}$
if and only if the homological equation
associated to $\varphi_{1}$, $\varphi_{2}$ is special (with respect to $f$).
\end{teo}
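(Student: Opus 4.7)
My plan is the standard path method for the dual-form picture. To each $\varphi \in {\mathcal D}_{f}$ attach the dual form $\hat{\Omega}_{\varphi} = dx/(\hat{u}_{\varphi}f)$, where $\log\varphi = \hat{u}_{\varphi}f\partial/\partial x$; a normalized $\hat{\sigma} \in \diffh{p}{n+1}$ conjugates $\varphi_{1}$ to $\varphi_{2}$ precisely when $(\hat{\sigma})_{*}\log\varphi_{1} = \log\varphi_{2}$. I would linearize this equation by connecting $\hat{\sigma}$ to the identity (the classes of ${\sim}_{*}$ are Krull-path-connected, as noted just before the theorem) and differentiating. After integration in the path parameter the resulting infinitesimal equation is exactly the homological equation, and the admissible denominator $f_{F}\prod_{j}f_{j}^{l_{j}-1}$ arises from the constraints that normalization places on the corresponding time-dependent generator.

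For $\varphi_{1}\sim_{*}\varphi_{2}\Rightarrow $ (special solution), choose a Krull-continuous path $\hat{\sigma}_{t} \in \diffh{p}{n+1}$, $t\in[0,1]$, of normalized elements with $\hat{\sigma}_{0}=Id$, $\hat{\sigma}_{1}=\hat{\sigma}$. Set $\varphi_{t}:=\hat{\sigma}_{t}\circ\varphi_{1}\circ\hat{\sigma}_{t}^{-1}\in{\mathcal D}_{f}$, write $\log\varphi_{t}=\hat{u}_{t}f\partial/\partial x$, and let $\hat{X}_{t}=\hat{g}_{t}\partial/\partial x$ be the time-dependent generator defined by $\partial_{t}\hat{\sigma}_{t}=\hat{X}_{t}\circ\hat{\sigma}_{t}$. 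Differentiating $(\hat{\sigma}_{t})_{*}\log\varphi_{1}=\log\varphi_{t}$ in $t$ and unwinding the Lie bracket $[\hat{g}_{t}\partial/\partial x,\hat{u}_{t}f\partial/\partial x]$ produces the infinitesimal identity
\[ \frac{\partial}{\partial x}\!\left(\frac{\hat{g}_{t}}{\hat{u}_{t}f}\right) = \frac{\partial}{\partial t}\!\left(\frac{1}{\hat{u}_{t}}\right)\frac{1}{f}. \]
Integrating over $t\in[0,1]$ gives $\hat{\alpha}:=-\int_{0}^{1}\hat{g}_{t}/(\hat{u}_{t}f)\,dt$, a formal solution of the homological equation. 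Normalizedness of $\hat{\sigma}_{t}$ makes each non-fibered $f_{j}$ an invariant of $\hat{X}_{t}$, forcing $\prod_{j=1}^{p}f_{j}\mid\hat{g}_{t}$; writing $\hat{g}_{t}=\hat{u}_{t}\hat{\beta}_{t}\prod_{j}f_{j}$ with $\hat{\beta}_{t}\in\formn$ gives $\hat{g}_{t}/(\hat{u}_{t}f) = \hat{\beta}_{t}/(f_{F}\prod_{j}f_{j}^{l_{j}-1})$, so $\hat{\alpha}$ is special with respect to $f$.

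For the converse, let $\hat{\alpha}=\hat{\beta}/(f_{F}\prod_{j}f_{j}^{l_{j}-1})$ be a special solution. Choose a path $\hat{u}_{t}$ of units with $\hat{u}_{0}=\hat{u}_{1}$, $\hat{u}_{1}=\hat{u}_{2}$ and $1/\hat{u}_{t}$ affine in $t$ (reparameterizing if the segment from $1/\hat{u}_{1}(0)$ to $1/\hat{u}_{2}(0)$ passes through $0$). Define $\hat{g}_{t}:=-\hat{\alpha}\hat{u}_{t}f = -\hat{u}_{t}\hat{\beta}\prod_{j}f_{j}\in\formn$, so $\hat{g}_{t}$ vanishes at the origin and each non-fibered $f_{j}$ is an invariant of $\hat{X}_{t}:=\hat{g}_{t}\partial/\partial x$. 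Solving the formal ODE $\partial_{t}H_{t}=\hat{g}_{t}(H_{t},x_{1},\ldots,x_{n})$ with $H_{0}=x$ order-by-order in $\formn$ yields $\hat{\sigma}_{t}:=(H_{t},x_{1},\ldots,x_{n})\in\diffh{p}{n+1}$, and the invariance gives $H_{t}-x\in(f_{j})$ for all $t$ and all $j$, showing $\hat{\sigma}:=\hat{\sigma}_{1}$ is normalized. Reading the displayed infinitesimal identity in reverse implies $\hat{\sigma}_{t}^{-1}\circ\exp(\hat{u}_{t}f\partial/\partial x)\circ\hat{\sigma}_{t}$ is $t$-independent, whence $\hat{\sigma}\circ\varphi_{1}=\varphi_{2}\circ\hat{\sigma}$.

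The main obstacles I foresee are two. First, making sense of the formal flow $\hat{\sigma}_{t}$: the vector field $\hat{X}_{t}$ need not be Krull-nilpotent when $\prod_{j}f_{j}$ has multiplicity one at $0$, so one cannot just write $\hat{\sigma}_{t}=\exp(\int\hat{X}_{t})$; instead one has to interpret $\hat{\sigma}_{t}$ as the order-by-order solution of the formal non-autonomous ODE, which is well posed because $\hat{g}_{t}$ vanishes at the origin. Second, justifying the asymmetric exponents $l_{j}-1$ versus $m_{k}$ in the definition of ``special'': normalization forces $\hat{g}_{t}$ to vanish on every non-fibered component (gaining one factor $f_{j}$) but imposes nothing on fibered components, where the conjugation only permutes $\partial/\partial x$-orbits and therefore leaves $f_{F}$ and the pole orders $m_{k}$ intact.
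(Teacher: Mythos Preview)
Your approach is correct and essentially matches the paper's (which does not reprove Theorem \ref{teo:UPD} here but points to its proof of Theorem \ref{teo:cartra} as the template): the sufficient direction is identical, using the affine path in $1/\hat{u}_{t}$ and the commutation $[\,\hat{\alpha}\,\hat{u}_{t}f\,\partial/\partial x+\partial/\partial t,\ \hat{u}_{t}f\,\partial/\partial x\,]=0$ to exhibit the normalized $\hat{\sigma}$ as a time-$1$ flow. For the necessary direction the paper writes the special solution in closed form as $\psi\circ\hat{\sigma}-\psi$, with $\psi$ an integral of the time form $dx/(\hat{u}_{2} f)$, which is exactly your integral $-\int_{0}^{1}\hat{g}_{t}/(\hat{u}_{t} f)\,dt$ computed along the explicit interpolation $\hat{\sigma}_{t}=(x+\eta(t)(x\circ\hat{\sigma}-x),x_{1},\ldots,x_{n})$; note that this concrete path is what you actually need, since the connectedness statement you cite just before the theorem refers to ${\sim}_{*}$-classes in ${\mathcal D}_{f}$, not to the set of normalized conjugators.
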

Theorem \ref{teo:UPD} implies that
a complete system of invariants for the formal classification is obtained by
studying the obstruction for a homological equation to be special \cite{UPD}.

The techniques used in the proof of theorem \ref{teo:UPD} in \cite{UPD}
are a simplified version of
the ideas in the proof of theorem \ref{teo:cartra} later in the paper.

   The next proposition shows that the formal invariants of the germs induced
by a up-diffeomorphism $\varphi$ at its fixed points are basically also formal invariants
of $\varphi$. Philosophically the origin is not much different than any other fixed point.
Thus the property in question (\ref{que:second}) holds true for generic points.
\begin{pro}
\label{pro:prifp}
Let $\varphi_{1},\varphi_{2} \in {\mathcal D}_{f}$. Consider an irreducible component $\gamma$
of $f=0$. Then $\varphi_{1} {\sim}_{*} \varphi_{2}$ implies that $\varphi_{1}$ and $\varphi_{2}$
are conjugated by an element of $\widehat{\rm Diff}_{p}({\mathbb C}^{n+1},q)$ for a generic
$q \in \gamma$.
Moreover this property is satisfied for any
$q \in \gamma$ if $\gamma$ is a unipotent component of $Fix (\varphi_{1})$.
\end{pro}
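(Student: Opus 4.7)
The plan is to apply Theorem \ref{teo:UPD} in the completed local ring at $q$, reducing the claim to the existence of a special solution of the local homological equation at $q$ with respect to the local factorization of $f$ at $q$. The first step is to make this local equation meaningful. Proposition \ref{pro:logtra} gives that each $\log \varphi_{i} = \hat{u}_{i} f \partial/\partial x$ is transversally formal along $Fix (\varphi_{i}) = \{f=0\}$; since $\hat{u}_{i}$ is a unit, both $\hat{u}_{i}$ and $1/\hat{u}_{i}$ are therefore t.f.\ along any irreducible component $\gamma$ of $\{f=0\}$, so the coefficient $(1/\hat{u}_{1} - 1/\hat{u}_{2})/f$ of the homological equation localizes at any $q \in \gamma$ via formal Taylor expansion transverse to $\gamma$.

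The hypothesis $\varphi_{1} \sim_{*} \varphi_{2}$, via Theorem \ref{teo:UPD}, provides a global special solution $\hat{\alpha}_{0} = \hat{\beta}_{0}/(f_{F} \prod f_{j}^{l_{j}-1})$ with $\hat{\beta}_{0} \in {\mathbb C}[[x,x_{1},\hdots,x_{n}]]$. For a generic $q \in \gamma$ I may assume $q$ is a smooth point of $\gamma$ lying on no other irreducible component of $\{f=0\}$. In the non-fibered case $\gamma = \{f_{j_{0}}=0\}$ with $m = l_{j_{0}}$, the factorization at $q$ reads $f = u_{q} f_{j_{0}}^{m}$ with $u_{q}$ a local unit, and all factors of $f_{F} \prod f_{j}^{l_{j}-1}$ other than $f_{j_{0}}^{m-1}$ are units at $q$; hence the denominator of $\hat{\alpha}_{0}$ localizes exactly to the shape prescribed by a local special solution. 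The fibered case is analogous.

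It remains to interpret $\hat{\beta}_{0}$ as a formal power series in the completed local ring at $q$, and I expect this step to be the main obstacle, since \emph{a priori} $\hat{\beta}_{0}$ is formal only at the origin. Clearing denominators in $\partial \hat{\alpha}_{0}/\partial x = (1/\hat{u}_{1} - 1/\hat{u}_{2})/f$ yields a linear ODE for $\hat{\beta}_{0}$ with analytic coefficients and right-hand side $1/\hat{u}_{1} - 1/\hat{u}_{2}$, which is t.f.\ along $\{f=0\}$ by the first paragraph. Solving this ODE recursively modulo $I(\{f=0\})^{N}$ for each $N$ should produce a transversally formal representative of $\hat{\beta}_{0}$ along $\{f=0\}$, which can then be Taylor-expanded around any $q \in \gamma$ to yield a formal series at $q$. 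The delicate point is that at each order the correction must remain realized by analytic functions, a task for which Proposition \ref{pro:logtra} will be used repeatedly.

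For the moreover part, assume $\gamma$ is unipotent, so either $l_{j_{0}} \geq 2$ in the non-fibered case, or $\gamma$ is fibered. At an arbitrary $q \in \gamma$, other components of $\{f=0\}$ may pass through $q$, yielding additional irreducible factors in the local factorization of $f$. The unipotent hypothesis still guarantees that $\gamma$ contributes a positive power to the denominator of $\hat{\alpha}_{0}$ (through $f_{j_{0}}^{l_{j_{0}}-1}$ with $l_{j_{0}}-1 \geq 1$, or through $f_{F}$), and Proposition \ref{pro:logtra} additionally gives that $\log \varphi_{i}$ is \emph{uniformly} transversally formal along $\gamma$. This stronger regularity allows the localization argument of the previous paragraph to go through at every $q \in \gamma$, not merely at generic points, completing the proof.
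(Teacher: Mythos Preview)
Your plan to apply Theorem \ref{teo:UPD} at $q$ has a genuine gap in the non-unipotent case. The whole apparatus of Theorem \ref{teo:UPD} --- infinitesimal generators, the homological equation, the notion of special solution --- is set up for elements of $\diff{up}{n+1}$, i.e.\ for diffeomorphisms whose linear part in the $x$-direction is the identity. When $\gamma$ is \emph{non}-unipotent, $(\partial(x\circ\varphi_{i})/\partial x)(q)\neq 1$ at every $q\in\gamma$ near the origin, so $\varphi_{1},\varphi_{2}\notin{\rm Diff}_{up}({\mathbb C}^{n+1},q)$ and Theorem \ref{teo:UPD} simply does not apply at $q$. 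The paper handles this case by an entirely different and more elementary argument: $\varphi_{1}\sim_{*}\varphi_{2}$ forces $(\partial(x\circ\varphi_{1})/\partial x)_{|\gamma}\equiv(\partial(x\circ\varphi_{2})/\partial x)_{|\gamma}$, and since this function is non-constant on $\gamma$ with value $1$ at the origin, for generic $q$ the common eigenvalue lies off the unit circle, whence a direct (hyperbolic, small-divisor-free) linearization argument conjugates $\varphi_{1}$ and $\varphi_{2}$ at $q$.

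For the unipotent case your approach is in the right spirit but both harder and not fully justified. The step ``solving this ODE recursively modulo $I(\{f=0\})^{N}$ should produce a transversally formal representative of $\hat{\beta}_{0}$'' is precisely the content of Lemma \ref{lem:spsotra}, and that lemma requires as an \emph{additional hypothesis} that the special solution converge by restriction to the component --- something you do not have. The paper avoids this difficulty by never trying to localize $\hat{\beta}_{0}$ itself. Instead it uses Proposition \ref{pro:logtra} to write $\hat{u}_{j}=u_{j}+(\text{something in }(f))$ with $u_{j}$ convergent, splitting the homological equation into a \emph{convergent} special equation $\partial\alpha/\partial x=1/(u_{1}f)-1/(u_{2}f)$, which admits an \emph{analytic} special solution (lemma 5.5 of \cite{UPD}), plus a pole-free remainder $\partial\alpha/\partial x=\hat K$ with $\hat K$ u.t.f.\ along $\gamma$. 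Both pieces localize trivially to any $q\in\gamma$, and Theorem \ref{teo:UPD} then applies at $q$ because $\gamma$ unipotent means $\varphi_{i}\in{\rm Diff}_{up}({\mathbb C}^{n+1},q)$.
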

For us the complementary of a proper real analytic set $S \subset \gamma$ is generic.
\begin{proof}
We have that $\varphi_{1} {\sim}_{*} \varphi_{2}$
implies that
$\partial{(x \circ \varphi_{1})}/\partial{x} \equiv \partial{(x \circ \varphi_{2})}/\partial{x}$
in $\gamma$. Suppose that $\gamma$ is non-unipotent with respect to $\varphi_{1}$;
since $\varphi_{1}$ is unipotent then
$\partial{(x \circ \varphi_{1})}/\partial{x}: \gamma \to {\mathbb C}$ is a non-constant function
whose value at the origin is $1$. By taking
$q \in \gamma \setminus \{ \partial{(x \circ \varphi_{1})}/\partial{x} \in {\mathbb S}^{1} \}$
we avoid the small divisors issues to obtain that $\varphi_{1}$ and $\varphi_{2}$ are conjugated
by an element of $\widehat{\rm Diff}_{p}({\mathbb C}^{n+1},q)$.

  Suppose that $\gamma$ is unipotent.
Let $\varphi_{j}={\rm exp}(\hat{u}_{j} f \partial / \partial x)$ for $j \in \{1,2\}$.
There exist units $u_{1}, u_{2} \in \convn$ such that
$\hat{u}_{j} - u_{j} \in (f)$ for $j \in \{1,2\}$ by prop. \ref{pro:logtra}.
The homological equation associated to $\varphi_{1}$ and $\varphi_{2}$ is of
the form
\[ \frac{\partial \alpha}{\partial x} =
\left( \frac{1}{{u}_{1} f} -  \frac{1}{{u}_{2} f} \right) + \hat{K} \]
where $\hat{K} \in \formn$ is u.t.f. along $\gamma$ by prop. \ref{pro:logtra}.
Since $\partial \alpha/\partial x = \hat{K}$ is obviously special then
$\partial \alpha/\partial x = 1/(u_{1}f) - 1/(u_{2}f)$ is special.
Moreover it has an analytic solution (lemma 5.5 in \cite{UPD}).
As a consequence
$\varphi_{1}$ and $\varphi_{2}$ are conjugated by an element of
$\widehat{\rm Diff}_{p}({\mathbb C}^{n+1},q)$
for any $q \in \gamma$ in a neighborhood of the origin by theorem \ref{teo:UPD}.
\end{proof}
\begin{rem}
\label{rem:nutf}
Let $\varphi=(x+y-x^{2},y) \in \diff{up}{2}$.
There exists $\tau = {\rm exp}(u_{\varphi}(y-x^{2}) \partial/\partial{x})$
such that $\varphi {\sim}_{*} \tau$ by lemma \ref{lem:fornorr}.
Now consider $x_{0}$ such that $1-2x_{0}$ is a root of the unit different than $1$.
The germs induced by $\tau_{|y=x_{0}^{2}}$ and $\varphi_{|y=x_{0}^{2}}$ at $x=x_{0}$
are not formally conjugated since the former one is periodic (it is embeddable)
and the latter one is not.
Hence $\tau$ and $\varphi$ are not conjugated by an element of
$\widehat{\rm Diff} ({\mathbb C}^{2},(x_{0},x_{0}^{2}))$.
Thus proposition \ref{pro:prifp} can not be improved to the whole $\gamma$
instead of a generic subset.
Clearly there is no u.t.f. (along $y=x^{2}$)
transformation conjugating $\varphi$ and $\tau$.
\end{rem}
\begin{rem}
Let $\varphi_{1},\varphi_{2} \in {\mathcal D}_{f}$ with $\varphi_{1} {\sim}_{*} \varphi_{2}$.
On the one hand remark \ref{rem:nutf}
implies that formal conjugations in general are not u.t.f. along non-unipotent irreducible components
of $f=0$. On the other hand the possibility remains open for unipotent components by
proposition \ref{pro:prifp}. This discrepancy justifies
why we do not require conjugating mappings to be
u.t.f. along non-unipotent components when defining the equivalence relation
$\stackrel{ut}{\sim}_{*}$.
\end{rem}
\section{Transversaly formal conjugations}
  In this section we provide a necessary and sufficient condition to assure
that two elements $\varphi_{1}, \varphi_{2}$ in ${\mathcal D}_{f}$
satisfy $\varphi_{1} \stackrel{t}{\sim}_{*} \varphi_{2}$.
The condition is stated in terms of the homological equation.
\begin{lem}
\label{lem:spedec}
Let $f = f_{F} \prod_{j=1}^{p} f_{j}^{l_{j}} \in {\mathbb C}\{x,x_{1},\hdots,x_{n}\}$.
Consider a special solution $\hat{\alpha}$ of a homological equation
$\partial{\alpha}/\partial{x} = \hat{K}/f$ where $\hat{K}$ in $\formn$ is t.f. along $f=0$.
Then $\hat{\alpha}$ is of the form
\[ \hat{\alpha} = \frac{\tau}{f_{F} \prod_{j=1}^{p} f_{j}^{l_{j}-1}} +
\frac{\hat{\beta}}{f_{F}} \]
where $\tau \in \convn$ and $\hat{\beta} \in \formn$.
\end{lem}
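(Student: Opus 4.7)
The special solution hypothesis means $\hat{\alpha} = \hat{\gamma}/(f_F g)$ with $g := \prod_{j=1}^{p} f_j^{l_j - 1}$ and $\hat{\gamma} \in \formn$; the desired conclusion is equivalent to writing $\hat{\gamma} = \tau + \hat{\beta} g$ with $\tau \in \convn$ and $\hat{\beta} \in \formn$, i.e., exhibiting a convergent principal part of $\hat{\gamma}$ modulo the ideal $(g)$. My plan is to use the t.f. hypothesis on $\hat{K}$ to replace the given formal equation by a convergent special equation, apply the existence of a convergent special solution for the latter (Definition \ref{def:spec}), and absorb the remaining error formally.

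Concretely I would fix $N := \max_{j} l_j$ and exploit the fact that $\hat{K}$ is t.f. along $\{f=0\}$, whose reduced ideal is the principal ideal $(f_F \prod_{j} f_j)$, to pick $K \in \convn$ and $\hat{R} \in \formn$ with $\hat{K} = K + (f_F \prod_{j} f_j)^{N} \hat{R}$. A direct computation then yields
\[
\frac{(f_F \prod_{j} f_j)^{N} \hat{R}}{f} \; = \; f_F^{N-1} \hat{R} \prod_{j} f_j^{N - l_j} \; \in \; \formn ,
\]
so formal integration in $x$ produces some $\hat{\beta}_1 \in \formn$ with $\partial \hat{\beta}_1 / \partial x = (\hat{K} - K)/f$. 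Setting $\hat{\alpha}_0 := \hat{\alpha} - \hat{\beta}_1 = (\hat{\gamma} - f_F g \hat{\beta}_1)/(f_F g)$, we see that $\hat{\alpha}_0$ is a formal special solution of the now convergent equation $\partial \alpha / \partial x = K/f$; in particular this truncated equation is special, and the fact recalled in Definition \ref{def:spec} supplies a convergent special solution $\alpha^{*} = \tau/(f_F g)$ with $\tau \in \convn$.

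The difference $\hat{\alpha}_0 - \alpha^{*}$ has zero derivative in $x$, hence equals some $\hat{c} \in {\mathbb C}[[x_1, \ldots, x_n]]$. Combining,
\[
\hat{\alpha} \; = \; \frac{\tau}{f_F g} + \hat{\beta}_1 + \hat{c} \; = \; \frac{\tau}{f_F g} + \frac{f_F (\hat{\beta}_1 + \hat{c})}{f_F} ,
\]
and setting $\hat{\beta} := f_F (\hat{\beta}_1 + \hat{c}) \in \formn$ yields the desired decomposition. The only real point to verify is that the truncated equation remains special so that the cited convergent existence result is applicable, but this is automatic because $\hat{\alpha}_0$ is itself a formal special solution of it; the choice $N \geq \max_j l_j$ is what guarantees the "remainder after truncating $\hat{K}$" contributes only to the $\hat{\beta}/f_F$ summand.
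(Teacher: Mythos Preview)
Your argument is essentially the same as the paper's: split $\hat{K}$ into a convergent part $K$ plus a remainder divisible by $f$, invoke the convergent special solution $\tau/(f_F g)$ for $\partial\alpha/\partial x = K/f$ (Definition~\ref{def:spec}), formally integrate the remainder, and analyze the $x$-constant discrepancy. The paper phrases the splitting as $\hat{K}/f = K_0/f + \hat{K}_1$ with $\hat{K}_1\in\formn$ rather than choosing an explicit truncation order $N$, but this is cosmetic.

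There is one small slip. You assert that $\hat{\alpha}_0 - \alpha^{*}$, having zero $x$-derivative, lies in ${\mathbb C}[[x_1,\ldots,x_n]]$. In fact both $\hat{\alpha}_0$ and $\alpha^{*}$ are only known to become formal power series after multiplication by $f_F g$, so a priori the difference is of the form $\hat{\xi}/f_F$ with $\hat{\xi}\in{\mathbb C}[[x_1,\ldots,x_n]]$: the $x$-independence rules out poles along the non-fibered factors $f_j$, but not along the fibered factor $f_F$. The paper makes exactly this point. Fortunately your final formula $\hat{\beta}=f_F(\hat{\beta}_1+\hat{c})$ absorbs the $1/f_F$, so the conclusion is unaffected; just replace ``$\hat{c}\in{\mathbb C}[[x_1,\ldots,x_n]]$'' by ``$\hat{c}=\hat{\xi}/f_F$ with $\hat{\xi}\in{\mathbb C}[[x_1,\ldots,x_n]]$'' and the proof is complete.
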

\begin{proof}
We have $\hat{K}/f  = K_{0}/f + \hat{K}_{1}$ for some
$K_{0} \in {\mathbb C}\{x,x_{1},\hdots,x_{n}\}$ and
$\hat{K}_{1} \in {\mathbb C}[[x,x_{1},\hdots,x_{n}]]$ by proposition \ref{pro:logtra}.
The special equation $\partial{\alpha}/\partial{x}=K_{0}/f$ has
a convergent solution $\alpha_{0} = \tau / (f_{F} \prod_{j=1}^{p} f_{j}^{l_{j}-1})$
(lemma 5.5 in \cite{UPD}).
There exists $\hat{\zeta} \in {\mathbb C}[[x,x_{1},\hdots,x_{n}]]$ such that
$\partial{\hat{\zeta}}/\partial{x} = \hat{K}_{1}$. We deduce that
$\partial({\hat{\alpha} - \alpha_{0} - \hat{\zeta})}/\partial{x}=0$. Since
\[ (\hat{\alpha} - \alpha_{0} - \hat{\zeta}) f_{F} \prod_{j=1}^{p} f_{j}^{l_{j}-1} \in \formn \]
then $\hat{\alpha} - \alpha_{0} - \hat{\zeta}$ is of the form
$\hat{\xi} /f_{F}$ where $\hat{\xi} \in {\mathbb C}[[x_{1},\hdots,x_{n}]]$.
Now we define $\hat{\beta} = f_{F} \hat{\zeta} + \hat{\xi}$.
\end{proof}
\begin{defi}
\label{def:conv}
Let $\varphi_{1},\varphi_{2} \in {\mathcal D}_{f}$ such that $\varphi_{1} {\sim}_{*} \varphi_{2}$.
Consider a special solution $\hat{\alpha}$ of the homological equation
associated to $\varphi_{1},\varphi_{2}$. By the previous lemma we have
$\hat{\alpha}= \alpha_{0} + \hat{\beta}/f_{F}$ where $\alpha_{0}$ is convergent and special
and $\hat{\beta}$ belongs to $\formn$. We say that $\hat{\alpha}$
{\it converges by restriction} to an irreducible analytic set
$\gamma \not \subset \{ f_{F}=0 \}$ if there exists $\kappa \in {\mathbb C}\{x,x_{1},\hdots,x_{n}\}$
such that $\hat{\beta} - \kappa \in I(\gamma)$.
We say that $\hat{\alpha}$ is t.f. (resp. u.t.f.) along an analytic set $\gamma$
if $\hat{\beta}$ is t.f. (resp. u.t.f.) along $\gamma$.
\end{defi}
Our goal in this section is proving:
\begin{teo}
\label{teo:cartra}
  Let $\varphi_{1}, \varphi_{2} \in {\mathcal D}_{f}$. Then
$\varphi_{1} \stackrel{t}{\sim}_{*} \varphi_{2}$ (resp. $\varphi_{1} \stackrel{ut}{\sim}_{*} \varphi_{2}$)
if and only if there exists a special solution of the homological equation
associated to $\varphi_{1},\varphi_{2}$ which is t.f. (resp. u.t.f.) along $f_{F}=0$
and converges by restriction to $f_{N}=0$.
\end{teo}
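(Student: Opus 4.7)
The plan is to apply the method of the path that underlies the proof of Theorem \ref{teo:UPD}, refined to track regularity along the fibered component $f_F = 0$ and the non-fibered components $f_N = 0$. I would write $\varphi_j = {\rm exp}(\hat{u}_j f \partial/\partial x)$ for $j \in \{1,2\}$ and interpolate via $\varphi_t = {\rm exp}(\hat{u}_t f \partial/\partial x)$, where $1/\hat{u}_t = (1-t)/\hat{u}_1 + t/\hat{u}_2$; each $\varphi_t$ lies in ${\mathcal D}_f$ since $\hat{u}_t$ remains a unit for $t \in [0,1]$. I would then seek a family $\hat{\sigma}_t \in \diffh{p}{n+1}$, normalized with respect to $f=0$, with $\hat{\sigma}_0 = Id$ and $\hat{\sigma}_t \circ \varphi_1 = \varphi_t \circ \hat{\sigma}_t$, and differentiate in $t$ to obtain a non-autonomous flow equation
\[ \frac{d\hat{\sigma}_t}{dt} = (\hat{\alpha}_t f) \frac{\partial}{\partial x} \circ \hat{\sigma}_t, \]
where $\hat{\alpha}_t$ is a special solution of the infinitesimal homological equation $\partial \alpha /\partial x = (\partial_t (1/\hat{u}_t))/f$. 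Since $1/\hat{u}_t$ is affine in $t$, this infinitesimal equation is $t$-independent and equals $\partial \alpha/\partial x = (1/\hat{u}_2 - 1/\hat{u}_1)/f$; integrating in $t \in [0,1]$, the primitive $\int_0^1 \hat{\alpha}_t \, dt$ is a special solution of the full homological equation associated to $\varphi_1$ and $\varphi_2$, and conversely every special solution arises this way (up to the ambiguity of $x$-independent additive terms). This bijection between the time-one map $\hat{\sigma} = \hat{\sigma}_1$ and special solutions is the essence of Theorem \ref{teo:UPD}.

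The refinement I need is that this bijection respects the transversal regularity along $f_F$ and the convergent restriction to $f_N$. For the ``only if'' direction, assume $\hat{\sigma}$ is t.f.\ (resp.\ u.t.f.) along $f_F = 0$ and converges by restriction to $f_N = 0$. Differentiating the Picard expansion of $\hat{\sigma}$ expresses $\hat{\alpha}_t f \partial/\partial x$ in terms of derivatives and compositions of $\hat{\sigma}_t^{\pm 1}$ and $\varphi_t$, which all live in the category of formal series t.f.\ (resp.\ u.t.f.) along $f_F = 0$ because $f_F$ depends only on $x_1, \ldots, x_n$ and these operations preserve the $I(f_F)$-adic completion; the same argument with $I(f_N)$ shows convergence by restriction to $f_N = 0$ of the resulting special solution. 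For the ``if'' direction, start from a special solution $\hat{\alpha}$ with the prescribed regularity, decompose $\hat{\alpha} = \alpha_0 + \hat{\beta}/f_F$ via Lemma \ref{lem:spedec}, and construct $\hat{\sigma}_t$ by Picard iteration of the above ODE; each iterate lies in the projective limit $\lim_{\leftarrow} {\mathbb C}\{x, x_1, \ldots, x_n\}/I(f_F)^j$ (resp.\ in $\lim_{\leftarrow} G_{\{f_F = 0\} \cap W}/I(f_F)^j$ for a neighborhood $W$, via Lemma \ref{lem:handy}), and the limit $\hat{\sigma}_1$ inherits t.f.\ (resp.\ u.t.f.) along $f_F = 0$. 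Convergence by restriction to $f_N = 0$ is preserved by the analogous closure of $I(f_N)$ under Picard iteration.

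The main obstacle lies in the uniform case of the ``if'' direction: verifying that the Picard iteration converges on a fixed neighborhood $W$ of the origin rather than shrinking at each step. The key observation is that the flow generated by $\hat{\alpha}_t f \partial/\partial x$ preserves the fibers of $dx_1 = \cdots = dx_n = 0$, along which $f_F$ is constant; hence the $I(f_F)$-adic Krull convergence takes place uniformly in the transverse parameters, and Lemma \ref{lem:handy} ensures that membership in the space of u.t.f.\ series along $f_F = 0$ is stable under successive composition with the flow. The convergent part $\alpha_0$ of the decomposition is handled separately by the analytic conjugation result (Lemma 5.5 in \cite{UPD}), leaving only the formal part $\hat{\beta}/f_F$ to deal with in the transversal-formal setting.
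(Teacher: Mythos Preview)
Your ``if'' direction follows the paper's path method, but you omit a step that is not optional: Lemma~\ref{lem:spsotra}. The hypothesis gives you a special solution $\hat{\alpha}$ that merely \emph{converges by restriction} to $f_N=0$, i.e.\ $\hat{\beta}-\kappa\in I(\gamma)$ for each non-fibered component $\gamma$. This is far weaker than $\hat{\alpha}$ being t.f.\ (or u.t.f.) along $f_N=0$, and it is the latter that is needed so that the exponential
\[
\hat{\sigma}={\rm exp}\!\left(\hat{\alpha}\,\frac{\hat{u}_1\hat{u}_2}{z\hat{u}_1+(1-z)\hat{u}_2}\,f\,\frac{\partial}{\partial x}+\frac{\partial}{\partial z}\right)_{|z=0}
\]
inherits transversal formality along \emph{all} of $f=0$ (recall $\varphi_1\stackrel{t}{\sim}_*\varphi_2$ demands $\hat{\sigma}$ be t.f.\ along $f=0$, not only along $f_F=0$). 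Lemma~\ref{lem:spsotra} supplies exactly this bootstrap, using Proposition~\ref{pro:logtra} to know that $\hat{K}$ is t.f.\ along $f_j=0$ and then a Weierstrass-type induction to climb the $(f_j)$-adic filtration. Without it your Picard iteration cannot produce a conjugation t.f.\ along the non-fibered components. (A minor point: $\hat{u}_t$ need not be a unit for every $t\in[0,1]$; the paper detours through $z=i$ when the singular parameter lies in $[0,1]$.)

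Your ``only if'' direction has a genuine gap. You are handed a single $\hat{\sigma}$, not a family $\hat{\sigma}_t$, so there is no ``Picard expansion of $\hat{\sigma}$'' to differentiate; the $\hat{\alpha}_t$ you would like to recover are simply not available. (You also state the hypothesis incorrectly: $\hat{\sigma}$ is assumed t.f.\ along $f=0$ and, in the u.t.f.\ case, u.t.f.\ along the \emph{unipotent} components---not ``t.f.\ along $f_F=0$ and convergent by restriction to $f_N=0$'', which is the conclusion on $\hat{\alpha}$.) The paper constructs the special solution directly from $\hat{\sigma}$ by an entirely different device: it picks a convergent $u$ with $\hat{u}_2-u\in(f)$, takes a multivalued analytic integral $\psi$ of the time form of $uf\,\partial/\partial x$ on the complement of $f=0$, carefully defines $\psi\circ\hat{\sigma}-\psi$ via path integration against approximations $\sigma_k$ of $\hat{\sigma}$ (using $x\circ\hat{\sigma}-x\in(\prod f_j)$), and sets
\[
\hat{\alpha}=(\psi+\hat{\gamma})\circ\hat{\sigma}-(\psi+\hat{\gamma}),
\]
where $\hat{\gamma}$ solves $\partial\hat{\gamma}/\partial x=1/(\hat{u}_2 f)-1/(uf)$. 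The transversal regularity of $\hat{\alpha}$ then comes from the estimate $(\psi\circ\sigma_k-\psi\circ\sigma_l)=O((x\circ\sigma_k-x\circ\sigma_l)/f)$. This construction is the substance of the necessary condition and is absent from your sketch.
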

The next lemma is the first step in the proof.
It implies that convergence and formal transversality along
generically transverse irreducible components of the fixed points set are equivalent
properties for special solutions.
\begin{lem}
\label{lem:spsotra}
Let $f = f_{F} \prod_{j=1}^{p} f_{j}^{l_{j}} \in {\mathbb C}\{x,x_{1},\hdots,x_{n}\}$. Fix $1 \leq j \leq p$.
Consider a special solution $\hat{\alpha}$ of a
homological equation $\partial{\alpha}/\partial{x} = \hat{K}/f$
($\hat{K} \in {\mathbb C}[[x,x_{1},\hdots,x_{n}]]$)
such that $\hat{\alpha}$ converges by restriction to $f_{j}=0$.
Assume that $\hat{K}$ is t.f. (resp. u.t.f.)
along $f_{j}=0$. Then $\hat{\alpha}$ is t.f. (resp. u.t.f.) along $f_{j}=0$.
\end{lem}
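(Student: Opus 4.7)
The plan is a bootstrap induction on $k\geq 1$, producing at each stage a convergent representative $\kappa_{k}\in\convn$ of the formal part $\hat{\beta}$ of $\hat{\alpha}$ modulo $(f_{j})^{k}$. The base case $k=1$ is exactly the convergence-by-restriction hypothesis, and the desired conclusion (t.f.\ along $f_{j}=0$) is the existence of such $\kappa_{k}$ for every $k$.

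First I would rewrite the equation $\parx{\hat{\alpha}}=\hat{K}/f$ using the special form $\hat{\alpha}=\hat{\beta}/P$ with $P=f_{F}\prod_{i=1}^{p}f_{i}^{l_{i}-1}$. A direct computation produces the first-order linear ODE
\[
\Bigl(\prod_{i=1}^{p}f_{i}\Bigr)\parx{\hat{\beta}}\;-\;\hat{\beta}\sum_{i=1}^{p}(l_{i}-1)\parx{f_{i}}\prod_{m\neq i}f_{m}\;=\;\hat{K},
\]
whose coefficients are convergent germs. Only the $f_{j}$-adic behaviour of these coefficients matters for the argument, and it is governed by $l_{j}$ together with $\parx{f_{j}}$.

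For the inductive step I would write $\hat{\beta}=\kappa_{k}+f_{j}^{k}\hat{\gamma}_{k}$ with $\hat{\gamma}_{k}\in\formn$, substitute into the ODE, expand by Leibniz, and isolate the unique term of lowest $f_{j}$-order in which $\hat{\gamma}_{k}$ appears. This yields a congruence $L\,\hat{\gamma}_{k}\equiv\Phi_{k}\pmod{(f_{j})}$, where $L$ is a convergent germ whose leading contribution is a nonzero multiple of $k\,\parx{f_{j}}\prod_{m\neq j}f_{m}$, and $\Phi_{k}$ is a convergent combination of $\hat{K}$, $\kappa_{k}$ and $\parx{\kappa_{k}}$ modulo $(f_{j})$. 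Irreducibility of $f_{j}$, its non-fiberedness (so $\parx{f_{j}}\notin(f_{j})$) and the coprimality of the $f_{i}$'s together imply that $L$ is a nonzerodivisor modulo $(f_{j})$ and is a unit on a neighborhood of a generic point of $f_{j}=0$. Since $\hat{K}$ is t.f.\ along $f_{j}=0$, the right-hand side $\Phi_{k}$ admits a convergent representative modulo $(f_{j})$; dividing by $L$ produces $\mu_{k}\in\convn$ with $\hat{\gamma}_{k}-\mu_{k}\in(f_{j})$, and $\kappa_{k+1}:=\kappa_{k}+f_{j}^{k}\mu_{k}$ advances the induction.

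For the u.t.f.\ upgrade I would repeat the construction within a fixed neighborhood $W$ of the origin. Lemma \ref{lem:handy} reduces the problem to showing $\hat{\beta}\in\lim_{\leftarrow}G_{\{f_{j}=0\}\cap W}/(f_{j})^{k}$ for a common $W$. This follows because u.t.f.\ representatives of $\hat{K}$ can be chosen in a single neighborhood and the division by $L$ is valid on any neighborhood on which $L|_{f_{j}=0}$ is nonvanishing. The main technical point, where I expect the bulk of the work to concentrate, is the case $l_{j}=1$: then the $\hat{\beta}$-coefficient in the ODE also lies in $(f_{j})$, so the reduction above produces a degenerate congruence. I would handle this by first dividing the entire ODE by $f_{j}$---the existence of a special solution forces $\hat{K}\in(f_{j})$ in that case---and then applying the same bootstrap to the reduced equation.
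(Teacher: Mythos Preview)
Your bootstrap-induction strategy is exactly the paper's approach, but there are two genuine gaps in the execution.

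\textbf{Wrong leading coefficient and misplaced degeneracy.} Substituting $\hat\beta=\kappa_{k}+f_{j}^{k}\hat\gamma_{k}$ into your ODE and reducing modulo $(f_{j}^{k+1})$, the coefficient of $f_{j}^{k}\hat\gamma_{k}$ is
\[
\bigl(k-(l_{j}-1)\bigr)\,\parx{f_{j}}\prod_{m\neq j}f_{m},
\]
not $k\,\parx{f_{j}}\prod_{m\neq j}f_{m}$: the second term of your ODE contributes $-(l_{j}-1)\parx{f_{j}}\prod_{m\neq j}f_{m}$ modulo $(f_{j})$. Hence the congruence degenerates at $k=l_{j}-1$ whenever $l_{j}\geq 2$, and your induction stalls precisely there --- the case $l_{j}=1$ is in fact harmless, with coefficient $k$. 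The paper avoids this entirely by first invoking Lemma~\ref{lem:spedec} to split off a convergent special part, reducing to an equation $\partial_{x}(\hat\beta/f_{F})=\hat K_{1}$ with $\hat K_{1}\in\formn$; in that reduced equation the bootstrap coefficient is always $k$, never zero.

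\textbf{The division by $L$ is the heart of the matter.} Writing $L=c\,\parx{f_{j}}\prod_{m\neq j}f_{m}$, note that $L$ vanishes at the origin (every $f_{m}$ does), so ``dividing by $L$'' in $\convn/(f_{j})$ is not an elementary operation. For the t.f.\ statement one can rescue the step by faithful flatness of $\formn$ over $\convn$: from $L\hat\gamma_{k}\equiv\Phi_{k}\pmod{(f_{j})}$ one gets $\Phi_{k}\in(L,f_{j})\formn\cap\convn=(L,f_{j})\convn$, hence a convergent $\mu_{k}$. But you do not say this, and more importantly the u.t.f.\ upgrade cannot be obtained this way: your sentence ``the division by $L$ is valid on any neighborhood on which $L_{|f_{j}=0}$ is nonvanishing'' gives no neighborhood of the origin. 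The paper's proof supplies the missing mechanism: after Weierstrass preparation in transversal coordinates $(y,y_{1},\dots,y_{n})$, one builds $\xi$ explicitly by Lagrange interpolation over the $\nu$ sheets of $f_{j}=0$, shows $\xi\Delta$ is holomorphic for a suitable discriminant $\Delta$, and then uses a fixed neighborhood $W$ chosen so that division by $f_{j}$ in ${\mathcal O}(W)$ and by $\Delta$ in ${\mathcal O}(V)$ are stable. This is what guarantees the successive $\beta_{k}$ live in a common ${\mathcal O}(W)$, and it is the real content of the lemma in the u.t.f.\ case.
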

\begin{proof}
Suppose $\hat{K}$ is u.t.f. along $f_{j}=0$, the t.f. case is simpler.
By lemma \ref{lem:spedec} the solution $\hat{\alpha}$ is of the form
$\tau/(f_{F} \prod_{j=1}^{p} f_{j}^{l_{j}-1}) + \hat{\beta}/f_{F}$. By the proof of
lemma \ref{lem:spedec} the series $\hat{\beta}$ satisfies
$\partial{(\hat{\beta}/f_{F})}/\partial{x} = \hat{K}_{1}$ for some $\hat{K}_{1}$ in $\formn$.
Moreover $\hat{K}_{1}$ is u.t.f. along $f_{j}=0$.

Let us explain the idea of the proof. By hypothesis there exists
$\beta_{1} \in {\mathbb C}\{x,x_{1},\hdots,x_{n}\}$ such that
$\hat{\beta} - \beta_{1}$ belongs to the ideal $(f_{j})$. It suffices to prove
the existence of a neighborhood $W$ of $0$ in ${\mathbb C}^{n+1}$
such that given $\beta_{k} \in {\mathcal O}(W)$ with
$\hat{\beta} - \beta_{k} \in (f_{j}^{k})$ we can find
$\beta_{k+1}  \in {\mathcal O}(W)$ satisfying $\hat{\beta} - \beta_{k+1} \in (f_{j}^{k+1})$.
We show that $(\hat{\beta} - \beta_{k}) / f_{j}^{k}$ converges by restriction to $f_{j}=0$.
We can obtain $\beta_{k+1}$ of the form $\beta_{k}+ f_{j} \xi$
where $\xi$ is the Weierstrass remainder of dividing
 $(\hat{\beta} - \beta_{k}) / f_{j}^{k}$ by $f_{j}$.

Consider coordinates $(y,y_{1},\hdots,y_{n})$ in ${\mathbb C}^{n+1}$ such that
$f_{j}=0$ does not contain $y_{1}=\hdots=y_{n}=0$.
We denote $\nu = \nu(f_{j}(y,0,\hdots,0))$ the multiplicity at $y=0$. Up to a multiplicative unit
$f_{j}$ can be expressed in the Weierstrass form
\[ y^{\nu} + a_{\nu-1}(y_{1},\hdots,y_{n})  y^{\nu-1}+ \hdots + a_{0}(y_{1},\hdots,y_{n}) \]
where $a_{j} \in {\mathbb C}\{y_{1},\hdots,y_{n}\}$ for any $0 \leq j < \nu$.
We denote by
\[ \eta_{1}(y_{1}^{0},\hdots,y_{n}^{0}) , \hdots, \eta_{\nu}(y_{1}^{0},\hdots,y_{n}^{0}) \]
the $\nu$ points (counted with multiplicity) in
$\{ f_{j}=0 \} \cap \cap_{k=1}^{\nu} \{ y_{k}=y_{k}^{0} \}$.
We define
\[ \Delta(y_{1},\hdots,y_{n}) = \left({
\prod_{k=1}^{\nu} \left({ \frac{\partial{f_{j}}}{\partial{x}} \circ \eta_{k} }\right)
\prod_{1 \leq k < l  \leq \nu} {(\eta_{k}-\eta_{l})}^{2}
}\right) (y_{1},\hdots,y_{n}) . \]
The function $\Delta$ is continuous and holomorphic outside of $\Delta=0$;
thus $\Delta$ belongs to ${\mathbb C}\{y_{1},\hdots,y_{n}\}$. We choose a
domain $W= U \times V \subset {\mathbb C} \times {\mathbb C}^{n}$ in
coordinates $(y,y_{1},\hdots,y_{n})$
such that $\Delta \in {\mathcal O}(V)$ and
\[ \left\{{
\begin{array}{l}
a = f_{j} b \ {\rm where} \ a \in {\mathcal O}(W) \ {\rm and} \
b \in {\mathbb C}\{y,y_{1},\hdots,y_{n}\} \implies b \in {\mathcal O}(W) \\
a = \Delta b \ {\rm where} \ a \in {\mathcal O}(V) \ {\rm and} \
b \in {\mathbb C}\{y_{1},\hdots,y_{n}\} \implies b \in {\mathcal O}(V) .
\end{array}
}\right. \]
For a more detailed discussion on the existence of $V$ and $W$ see
section 4.2 of \cite{UPD}.
By shrinking $U$ and $V$ if necessary we can suppose that there
exist $\beta_{1} \in {\mathcal O}(W)$ and a sequence $K_{1,k} \in {\mathcal O}(W)$
such that $\hat{\beta}-\beta_{1} \in (f_{j})$ and $\hat{K}_{1} - K_{1,k} \in (f_{j}^{k})$
for any $k \in {\mathbb N}$.

Now we prove that given
$\beta_{k} \in {\mathcal O}(W)$ ($k \geq 1$)
such that $\hat{\beta} - \beta_{k} \in (f_{j}^{k})$ there exists $\beta_{k+1} \in {\mathcal O}(W)$
such that $\hat{\beta} - \beta_{k+1} \in (f_{j}^{k+1})$.
  We have $\hat{\beta} = \beta_{k} + f_{j}^{k} \hat{\xi}$ where $\hat{\xi} \in \formn$. We obtain
\[ \frac{\partial{\beta_{k}}}{\partial{x}} +
k f_{j}^{k-1} \frac{\partial{f_{j}}}{\partial{x}} \hat{\xi} - f_{F} K_{1,k} \in (f_{j}^{k}) . \]
Therefore $f_{F} K_{1,k} - \partial{\beta_{k}}/\partial{x} \in {\mathcal O}(W)$
belongs to the ideal $(f_{j}^{k-1})$. By the choice of $W$ we obtain that
$L \stackrel{def}{=} (f_{F} K_{1,k} - \partial{\beta_{k}}/\partial{x})/f_{j}^{k-1}$ belongs
to ${\mathcal O}(W)$. Moreover $k (\partial{f_{j}}/\partial{x}) \hat{\xi} - L$ belongs to $(f_{j})$.
We define
\[ \xi =
\frac{1}{k} \sum_{l=1}^{\nu} \frac{L}{\partial{f_{j}}/\partial{x}} \circ \eta_{l}
\prod_{m \in \{1,\hdots,\nu\} \setminus \{l\}} \frac{y-\eta_{m}}{\eta_{l} - \eta_{m}}  \]
for $(y,y_{1},\hdots,y_{n}) \in {\mathbb C} \times V$.
The meromorphic function $\xi$ is a polynomial of degree at most $\nu-1$ in the variable $y$.
Moreover $\xi \Delta$ is holomorphic in a neighborhood of the origin. Since
we have $\xi k \Delta (\partial{f_{j}}/\partial{x}) - L \Delta \in (f_{j})$ by definition of $\xi$ then
$\hat{\xi} \Delta - \xi \Delta$ belongs to $(f_{j})$. Consider the unique element
$\xi' \in {\mathbb C}[y][[y_{1},\hdots,y_{n}]]$ such that $\deg_{y} \xi' \leq \nu-1$ and
$\hat{\xi} - \xi' \in (f_{j})$. Then we have $\xi' \Delta \equiv \xi \Delta$ in
${\mathbb C}[[y,y_{1},\hdots,y_{n}]]$.
Thus the coefficients $\xi_{l} \in {\mathcal O}(V)$ of
$\xi \Delta = \sum_{l=0}^{\nu-1} \xi_{l} {y}^{l}$ belong to $(\Delta)$ for any $0 \leq l < \nu$.
By the choice of $V$ the function $\xi$ belongs to ${\mathcal O}(W)$.
Moreover $\xi$ clearly satisfies $\hat{\xi} - \xi \in (f_{j})$, thus we can define
$\beta_{k+1}=\beta_{k} + f_{j} \xi$.
\end{proof}
Now we can adapt the proof of theorem \ref{teo:UPD} that can be found in \cite{UPD}
to prove the sufficient condition of the theorem \ref{teo:cartra}.
\begin{proof}[proof of the sufficient condition in theorem \ref{teo:cartra}]
Let $\hat{\alpha}$ be a special solution of the homological equation associated to
$\varphi_{1}$ and $\varphi_{2}$. Suppose that $\hat{\alpha}$ converges by restriction to
$f_{N}=0$ and that $\hat{\alpha}$ is u.t.f. along $f_{F}=0$ (the t.f. case is analogous).
By lemma \ref{lem:spsotra} and proposition \ref{pro:logtra} the solution $\hat{\alpha}$
is t.f. along $f_{N}=0$ and u.t.f. along the unipotent irreducible components of $f_{N}=0$.

Let us use the path method  (see \cite{Rou:ast}, \cite{Mar:ast}).
The infinitesimal generator $\log \varphi_{j}$ of $\varphi_{j}$ is of the form 
$\hat{u}_{j} f \partial /\partial x$. We define
\[ X_{1+z} =  \frac{\hat{u}_{1} \hat{u}_{2}}{z \hat{u}_{1} + (1-z) \hat{u}_{2}}
f \frac{\partial}{\partial{x}}  \]
for $z \in {\mathbb C}$. The power series
$\hat{u}_{1} \hat{u}_{2}/(z \hat{u}_{1} + (1-z) \hat{u}_{2})$ is a unit for
$z \neq \hat{u}_{2}(0)/(\hat{u}_{2}(0)-\hat{u}_{1}(0))$.
The homological equation associated to ${\rm exp}(X_{1})$ and
${\rm exp}(X_{1+z})$ is of the form
$\partial \alpha / \partial x = z \hat{K} / f$ where
$\hat{K}$ belongs to ${\mathbb C}[[x,x_{1},\hdots,x_{n}]]$. They are all special since
the one corresponding to $z=1$ is. We have that
\[ \left[
\hat{\alpha} \frac{\hat{u}_{1} \hat{u}_{2}}{z \hat{u}_{1} + (1-z) \hat{u}_{2}}
f \frac{\partial}{\partial{x}} + \frac{\partial}{\partial{z}} ,
 \frac{\hat{u}_{1} \hat{u}_{2}}{z \hat{u}_{1} + (1-z) \hat{u}_{2}}
f \frac{\partial}{\partial{x}}
\right] \equiv 0. \]
Suppose $\hat{u}_{2}(0)/(\hat{u}_{2}(0)-\hat{u}_{1}(0)) \not \in [0,1]$. Then
\[ \hat{\sigma} =
{\rm exp} \left({ \hat{\alpha} \frac{\hat{u}_{1} \hat{u}_{2}}{z \hat{u}_{1} + (1-z) \hat{u}_{2}}
f \frac{\partial}{\partial{x}} + \frac{\partial}{\partial{z}}
}\right)_{|z=0} \]
is a normalized element of $\diffh{p}{n+1}$ conjugating $\varphi_{1}$ and $\varphi_{2}$
(see prop. 5.10 in \cite{UPD} for more details).
Since $\hat{\alpha}$, $\hat{u}_{1}$ and $\hat{u}_{2}$ are t.f. along $f=0$ then
$\hat{\sigma}$ is t.f. along $f=0$. Denote by $V$ the union of the unipotent components of $f=0$.
The expression for $\hat{\sigma}$ implies that
$x \circ \hat{\sigma} \in \lim_{\leftarrow} G_{V \cap W}/I(V)^{j}$ (see def. \ref{def:g})
for some open neighborhood $W$ of the origin. By lemma \ref{lem:handy}
we obtain that $\hat{\sigma}$ is u.t.f. along the unipotent components of $f=0$.

Suppose $\hat{u}_{2}(0)/(\hat{u}_{2}(0)-\hat{u}_{1}(0)) \in [0,1]$. Proceeding as
previously we obtain
${\rm exp}(X_{1}) \stackrel{ut}{\sim}_{*} {\rm exp}(X_{1+i})$ and
${\rm exp}(X_{1+i}) \stackrel{ut}{\sim}_{*} {\rm exp}(X_{2})$.
This implies $\varphi_{1} \stackrel{ut}{\sim}_{*} \varphi_{2}$.
\end{proof}
Let us introduce the tools to prove the necessary condition in theorem \ref{teo:cartra}.
Take a holomorphic function $\psi$ defined in some simply connected open subset $W$ of
$U \setminus \{ f=0\}$ for some open neighborhood $U$ of the origin. We say that
$\psi$ is an integral of the time form of $f \partial/\partial{x}$ if
$\psi \circ {\rm exp}(t f \partial/\partial{x}) = \psi +t$ for $t \in {\mathbb C}$.
This condition is equivalent to
\[ f \frac{\partial{\psi}}{\partial{x}}=1 \Leftrightarrow \frac{\partial{\psi}}{\partial{x}} = \frac{1}{f} .\]
The last condition can be used to extend $\psi$ along any continuous path
$\gamma:[0,1] \to [\cap_{j=1}^{n} \{ x_{j}=x_{j}^{0} \}] \cap [U \setminus \{ f=0 \}]$ such that $\gamma(0) \in W$.
In fact the value $\psi(\gamma(1))-\psi(\gamma(0))$ does not depend on the choice of $\psi$
but only on $f \partial / \partial{x}$ and $\gamma$.

Let $\prod_{j=1}^{p} f_{j}^{l_{j}}$ be the irreducible decomposition of $f_{N}$.
Next we explain how we can define $\psi \circ \hat{\sigma} - \psi$ for any $\hat{\sigma} \in \diffh{p}{n+1}$
such that $x \circ \hat{\sigma} - x \in (\prod_{j=1}^{p} f_{j})$.
Fix $k \in {\mathbb N}$. We consider
\[ \sigma_{k}=(x+ A_{k} \prod_{j=1}^{p} f_{j}, x_{1}, \hdots , x_{n}) \]
where $A_{k} \in \convn$ and
$x \circ \hat{\sigma} - x \circ \sigma_{k} \in \tilde{\mathfrak m}^{k}$
($\tilde{\mathfrak m}$ is the maximal ideal of $\formn$).

Let $\eta : [0,1] \to {\mathbb C}$ be a path admitting a holomorphic extension
$\eta: U \to {\mathbb C}$ to a neighborhood of $[0,1]$ and such that
$\eta(0)=0$, $\eta(1)=1$ and
$1+ \eta(\lambda)((\partial (x \circ \hat{\sigma})/\partial x)(0)-1) \neq 0$
for any $\lambda \in [0,1]$. We define
\[ \beta(x,x_{1},\hdots,x_{n}, \lambda) = (x +
\eta(\lambda)(x \circ \sigma_{k}(x,x_{1},\hdots,x_{n})-x),x_{1},\hdots,x_{n},\lambda) . \]
We have $\beta \in {\rm Diff} ({\mathbb C}^{n+2}, (0,\hdots,0,\lambda))$
for any $\lambda \in [0,1]$ by the choice of $\eta$ and the inverse function theorem.
By construction $f=0$ is invariant by $\beta$, therefore the path
$\beta (q,\lambda) \not \in \{ f=0 \}$ for all
$q \in U \setminus \{f=0\}$ and $\lambda \in [0,1]$.

Let $q \not \in \{ f=0 \}$, we define
\[ (\psi \circ \sigma_{k} - \psi)(q) = \psi_{1}(\sigma_{k}(q)) - \psi_{0}(q) \]
where $\psi_{0}$ is a holomorphic integral of the time form of $f \partial/\partial{x}$
in the neighborhood of $q$ and $\psi_{1}$ is the analytic continuation of $\psi_{0}$
along the path $\beta_{q}:[0,1] \to {\mathbb C}^{n+1} \setminus \{ f=0 \}$ given by
$\beta_{q}(\lambda)=\beta(q,\lambda)$.

We have
\[ \psi \circ \sigma_{k} - \psi = \int_{\beta} \frac{\partial \psi}{\partial x} dx =
O \left( \frac{1}{f}  (x \circ \sigma_{k} -x) \right) =
O \left( \frac{A_{k}}{ f_{F} \prod_{j=1}^{p} f_{j}^{l_{j}-1} } \right)    \]
and
\begin{equation}
\label{equ:tf}
 (\psi \circ \sigma_{k} - \psi \circ \sigma_{l})(q)=
\int_{(x \circ \sigma_{l})(q)}^{(x \circ \sigma_{l})(q)} \frac{\partial \psi}{\partial x} dx =
O \left( \frac{x \circ \sigma_{k} -x \circ \sigma_{l}}{f}  (q) \right)      .
\end{equation}

Therefore
$(\psi \circ \sigma_{k} - \psi) f_{F} \prod_{j=1}^{p} f_{j}^{l_{j}-1} \in \convn$
converges in the Krull topology to a series we denote by
$(\psi \circ \hat{\sigma} - \psi) f_{F} \prod_{j=1}^{p} f_{j}^{l_{j}-1}$.
\begin{proof}[proof of the necessary condition in theorem \ref{teo:cartra}]
Suppose that $\varphi_{1} \stackrel{ut}{\sim}_{*} \varphi_{2}$
(the case $\varphi_{1} \stackrel{t}{\sim}_{*} \varphi_{2}$ is simpler).
Consider a normalized
$\hat{\sigma} \in \diffh{p}{n+1}$ conjugating $\varphi_{1}$ and $\varphi_{2}$
and such that $\hat{\sigma}$ is t.f. along $f=0$
and u.t.f. along the unipotent components of $f=0$.

  Let $\hat{u}_{j} = (\log \varphi_{j})(x)/f$ for $j \in \{1,2\}$.
We choose $u \in \convn$ such that $\hat{u}_{2} - u \in (f)$.
Denote $\hat{K}=1/(\hat{u}_{2}f) - 1/(uf)$, we have that $\hat{K} \in \formn$ is u.t.f. along $f_{F}=0$
by proposition \ref{pro:logtra}. Hence we have $\hat{K} = \sum_{j=0}^{\infty} K_{j} f_{F}^{j}$
where $K_{j} \in {\mathcal O}(U)$ for any $j \geq 0$ and some open neighborhood $U$ of $0$.
For a good choice of $U$ there exists $\gamma_{j} \in {\mathcal O}(U)$ such that
$\partial{\gamma_{j}}/\partial{x}=K_{j}$ for any $j \in {\mathbb N} \cup \{0\}$.
Now $\hat{\gamma}=\sum_{j=0}^{\infty} \gamma_{j} f_{F}^{j}$ is u.t.f. along $f_{F}=0$ and
it satisfies $\partial{\hat{\gamma}}/\partial{x}=\hat{K}$.

Consider an integral $\psi$ of the time form of $u f \partial/\partial{x}$. We have that
$\partial{(\psi + \hat{\gamma})}/\partial{x}=1/(\hat{u}_{2}f)$. Since
$\hat{\sigma}$ conjugates $\log \varphi_{1}$ and $\log \varphi_{2}$ we obtain
$\partial{((\psi + \hat{\gamma}) \circ \hat{\sigma})}/\partial{x}=1/(\hat{u}_{1}f)$
where $\psi \circ \hat{\sigma} = \psi + (\psi \circ \hat{\sigma} - \psi)$. Therefore
\[ \hat{\alpha} = (\psi + \hat{\gamma}) \circ \hat{\sigma} - (\psi + \hat{\gamma}) \]
is a solution of the homological equation associated to $\varphi_{1}$ and $\varphi_{2}$.
Moreover $\hat{\alpha}$ is special since $\psi \circ \hat{\sigma} - \psi$ is special.
By the choice of $\hat{\gamma}$ we obtain that $\hat{\gamma} \circ \hat{\sigma} - \hat{\gamma}$
is u.t.f. along $f_{F}=0$. It also converges by restriction to $f_{N}=0$ since
$\hat{\gamma} \circ \hat{\sigma} - \hat{\gamma} \in \sqrt{(f_{N})}$.
Let $V$ be the union of the unipotent components of $f=0$.
Consider $\sigma_{k} \in \diff{p}{n+1}$ such that
$x \circ \hat{\sigma} - x \circ \sigma_{k} \in (f^{k})$ for any $k \geq 2$.
By hypothesis we can suppose that for any $k \geq 2$ the diffeomorphism
$\sigma_{k}$ is defined in the neighborhood of $V \cap W$ for some open neighborhood $W$
of $0$ independent of $k$. The function
$(\psi \circ \sigma_{k} - \psi) f_{F} \prod_{j=1}^{p} f_{j}^{l_{j}-1}$
belongs to $G_{V \cap W}$ for any $k \geq 2$.
The equation (\ref{equ:tf}) implies that
$(\psi \circ \hat{\sigma} - \psi) f_{F} \prod_{j=1}^{p} f_{j}^{l_{j}-1} $
is t.f. and also u.t.f. along the unipotent components of $f=0$.
Now we obtain that
\[ \hat{\alpha} = (\psi  \circ \hat{\sigma} - \psi) +
(\hat{\gamma} \circ \hat{\sigma} - \hat{\gamma}) \]
is u.t.f. along $f_{F}=0$ and converges by restriction to $f_{N}=0$.
\end{proof}
We claim that in most situations theorem  \ref{teo:cartra}
can be improved and
we have  $\varphi_{1} \stackrel{ut}{\sim}_{*} \varphi_{2}$ if and only if there exists
a special solution of the homological equation converging by restriction to
$f_{N}=0$. Up to substracting an special analytic equation we can suppose that
the homological equation is of the form
\[ \frac{\partial \alpha}{\partial x} = \sum_{j=0}^{\infty} K_{j} f_{F}^{j} \]
where $K_{j} \in {\mathcal O}(U)$ for any $j \geq 0$ and some open neighborhood $U$ of $0$.
Suppose that $f_{N}=0$ is empty.
We can consider an analytic solution $\alpha_{j} \in  {\mathcal O}(U)$ of
$\partial \alpha/\partial x = K_{j}$ for any $j \geq 0$.
Then $\tilde{\alpha}=\sum_{j=0}^{\infty} \alpha_{j} f_{F}^{j}$ is a u.t.f. special solution of the
homological equation.

Suppose that $f_{N}=0$ contains a smooth hypersurface $\gamma$
given  by an equation $x = h(x_{1},\hdots,x_{n})$.  We require
$\alpha_{j}$ to satisfy $(\alpha_{j})_{|\gamma} \equiv 0$  for any $j \geq 0$
to obtain a solution $\tilde{\alpha}$ that is u.t.f. along $f_{F}=0$ and vanishing
by restriction to $\gamma$. Any other special solution of the homological
equation converging by restriction to $\gamma$ is of the form
$\tilde{\alpha}+ \xi(x_{1},\hdots.x_{n})/f_{F}$
where $\xi \in {\mathbb C}\{x_{1},\hdots,x_{n}\}$. Hence the existence of a special
solution converging by restriction to $f_{N}=0$ implies that $\tilde{\alpha}$
converges by restriction to $f_{N}=0$ and is u.t.f. along $f_{F}=0$.
We can apply theorem \ref{teo:cartra}.

The examples above do not describe all the situations where convergence of
a special solution by restriction to $f_{N}=0$ implies u.t.f. behavior along $f_{F}=0$.
Anyway by the previous discussion the counterexamples provided by the
Main Theorem and theorems  \ref{teo:intr2} and  \ref{teo:intr3} can only obtained if
$f_{N}=0$ contains the line $l \equiv \{x_{1}=\hdots=x_{n}=0\}$
or otherwise if $f_{N}=0$ neither contains $l$ nor it is a smooth hypesurface
transversal to $\partial / \partial x$. Both these settings represent obstructions
to the existence of t.f. conjugations (see sections
\ref{sec:bad} and \ref{sec:tra} respectively).
\section{Polynomial families}
Fix $f \in \convn$ such that $f(0)=\partial{f}/\partial{x}(0)=0$. We define
\[ {\mathcal D}_{f}' =\{ \varphi \in {\mathcal D}_{f} :
x \circ \varphi -  x \circ {\rm exp}(f \partial / \partial{x}) \in (f^{2}) \} . \]
The family ${\mathcal D}_{f}'$ is a subset of ${\mathcal D}_{f}$ whose elements
belong to the same class of formal conjugacy.
In fact given $\varphi_{1}, \varphi_{2} \in {\mathcal D}_{f}'$ the homological
equation associated to $\varphi_{1}$, $\varphi_{2}$ is special since it is of the form
$\partial \alpha/\partial x = \hat{K}$ for some
$\hat{K} \in {\mathbb C}[[x,x_{1},\hdots,x_{n}]]$.

We consider $\varphi_{0} \in {\mathcal D}_{f}'$; we define the polynomial family
\[ \varphi_{\lambda,\Delta} = (x \circ \varphi_{0} + \lambda {f}^{2} \Delta, x_{1}, \hdots,x_{n} ) \]
where $\Delta$ belongs to $\convn$.

The idea is using potential theory to obtain necessary conditions
in order to have
${\rm exp}(f \partial/\partial{x}) \stackrel{ut}{\sim}_{*} \varphi_{\lambda,\Delta}$
for any $\lambda \in {\mathbb C}$. These conditions are obtained by
derivating the homological equation with respect to $\lambda$.

Next we show that the homological equation associated to
${\rm exp}(f \partial/\partial{x})$ and $\varphi_{\lambda,\Delta}$ is polynomial in $\lambda$.
This is key to prove that either  we have
${\rm exp}(f \partial/\partial{x}) \stackrel{ut}{\sim}_{*} \varphi_{\lambda,\Delta} \ \forall
\lambda \in {\mathbb C}$ or
${\rm exp}(f \partial/\partial{x}) \not \stackrel{ut}{\sim}_{*} \varphi_{\lambda,\Delta}$
for generic $\lambda$.
\begin{lem}
Fix $\varphi_{0} \in {\mathcal D}_{f}'$ and $\Delta \in \convn$.
  The infinitesimal generator $\log \varphi_{\lambda, \Delta}$ is of the form
\begin{equation}
\label{equ:lempf1}
\log \varphi_{\lambda, \Delta} = f \left({ \sum_{0 \leq j_{0},\hdots,j_{n}}
a_{j_{0},\hdots,j_{n}}(\lambda) {x}^{j_{0}} \prod_{k=1}^{n} x_{k}^{j_{k}} }\right)
\frac{\partial}{\partial{x}}
\end{equation}
where $a_{j_{0},\hdots,j_{n}}$ is an entire function for any $0 \leq j_{0},\hdots,j_{n}$.
\end{lem}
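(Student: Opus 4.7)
The plan is to promote $\lambda$ to an extra coordinate and work with a single diffeomorphism. Define
\[
\tilde\varphi(x,x_{1},\ldots,x_{n},\lambda)=(x\circ\varphi_{0}+\lambda f^{2}\Delta,\,x_{1},\ldots,x_{n},\,\lambda),
\]
which belongs to $\diff{up}{n+2}$ because $\varphi_{0}\in\diff{up}{n+1}$ and the perturbation lies in $(f^{2})$. Proposition \ref{pro:logfor} applied to $\tilde\varphi$ provides a unique unit $\tilde u\in\mathbb{C}[[x,x_{1},\ldots,x_{n},\lambda]]$ with
\[
\log\tilde\varphi=\tilde u\cdot[(x\circ\varphi_{0}-x)+\lambda f^{2}\Delta]\cdot\frac{\partial}{\partial x}.
\]
Writing $\delta_{\lambda}=(x\circ\varphi_{0}-x)+\lambda f^{2}\Delta=f\cdot\tilde g_{\lambda}$ with $\tilde g_{\lambda}=(x\circ\varphi_{0}-x)/f+\lambda f\Delta$ a unit polynomial of degree one in $\lambda$, the lemma reduces to showing that every coefficient of $\tilde u$ (as a formal power series in $(x,x_{1},\ldots,x_{n})$ with coefficients in $\mathbb{C}[[\lambda]]$) is an entire function of $\lambda$. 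Once this is granted, $\tilde u$ can be specialized at any $\lambda_{0}\in\mathbb{C}$; by uniqueness of the infinitesimal generator the resulting vector field coincides with $\log\varphi_{\lambda_{0},\Delta}$, and the product $\tilde u\tilde g_{\lambda}$ reads off the monomial coefficients $a_{j_{0},\ldots,j_{n}}(\lambda)$ in the claimed form.

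To prove the entirety of the coefficients of $h:=\tilde u\delta_{\lambda}$ (from which those of $\tilde u$ follow by dividing out $\delta_{\lambda}$), I would equate Taylor's formula \eqref{equ:taylor1} for $\exp(h\,\partial/\partial x)(x)$ with $x\circ\tilde\varphi$, obtaining
\[
h+\sum_{k\ge 2}\frac{1}{k!}(h\,\partial/\partial x)^{k}(x)=(x\circ\varphi_{0}-x)+\lambda f^{2}\Delta,
\]
a right-hand side that is polynomial of degree at most one in $\lambda$. Expand $h=\sum_{\alpha}h_{\alpha}(\lambda)x^{\alpha}$ over multi-indices $\alpha=(j_{0},\ldots,j_{n})$. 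Because $\varphi_{0}\in\diff{up}{n+1}$ and $f(0)=(\partial f/\partial x)(0)=0$ one verifies $h(0,\lambda)\equiv 0$ and $(\partial h/\partial x)(0,\lambda)\equiv 0$; consequently, the coefficient of $x^{\alpha}$ in each nonlinear term $(h\,\partial/\partial x)^{k}(x)$ couples $h_{\alpha}$ only with $h_{\beta}$ of strictly smaller total degree $|\beta|<|\alpha|$ or of the same total degree $|\beta|=|\alpha|$ but strictly larger $x$-degree $\beta_{0}>\alpha_{0}$. Induction on $|\alpha|$, and within each total degree induction downward on $\alpha_{0}$, then expresses $h_{\alpha}(\lambda)$ as a polynomial expression in previously computed $h_{\beta}(\lambda)$'s and in the coefficients of $\delta_{\lambda}$, all of which are polynomial (in fact linear) in $\lambda$. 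Hence $h_{\alpha}(\lambda)\in\mathbb{C}[\lambda]$.

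The main obstacle is the combinatorial verification sketched above: showing that for each fixed multi-index $\alpha$ only finitely many iterates $(h\,\partial/\partial x)^{k}(x)$ contribute to the coefficient of $x^{\alpha}$, and that the resulting linear system at total degree $|\alpha|$ is triangular with respect to the $x$-degree (hence solvable over $\mathbb{C}[\lambda]$). Once the induction is complete, every $h_{\alpha}$ is a polynomial in $\lambda$, so $h/f$ has polynomial-in-$\lambda$ coefficients; expanding it in the monomial basis $x^{j_{0}}\prod x_{k}^{j_{k}}$ produces the coefficients $a_{j_{0},\ldots,j_{n}}(\lambda)\in\mathbb{C}[\lambda]\subset\mathcal{O}(\mathbb{C})$ asserted by the lemma.
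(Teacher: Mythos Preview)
Your argument is correct and follows precisely the route the paper indicates: the paper's own proof is the single sentence ``straightforward by using undetermined coefficients,'' and your proposal is a careful unpacking of exactly that. The combinatorial triangularity you flag as the ``main obstacle'' does hold: each term of $(h\partial_x)^{k}(x)$ is a product of $k$ factors $h^{(a_i)}$ with $\sum a_i=k-1$, so the coefficient of $x^{\alpha}$ involves $h_{\beta_1}\cdots h_{\beta_k}$ with $\sum\beta_i=\alpha+(k-1)e_0$; since $h_{e_0}=0$ and any $\beta_i$ with $(\beta_i)_0=0$ forces $a_i=0$, one checks that whenever $|\beta_i|=|\alpha|$ necessarily $(\beta_i)_0>\alpha_0$, giving the claimed triangular system over $\mathbb{C}[\lambda]$.

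One remark: your induction in fact yields $a_{j_0,\ldots,j_n}(\lambda)\in\mathbb{C}[\lambda]$, which is strictly stronger than the entirety asserted by the lemma and already contains the qualitative content of the subsequent Lemma~\ref{lem:bdddeg} (polynomiality, without the explicit degree bound). The detour through $\tilde\varphi\in\diff{up}{n+2}$ and Proposition~\ref{pro:logfor} is harmless but unnecessary: the undetermined-coefficients recursion can be run directly on $\varphi_{\lambda,\Delta}$ with $\lambda$ treated as a parameter.
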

The proof of the previous lemma is straightforward by using undetermined coefficients.
We can give a more precise description of the nature of the coefficients $a_{j_{0},\hdots,j_{n}}$.
\begin{lem}
\label{lem:bdddeg}
The coefficient $a_{j_{0},\hdots,j_{n}}$ is a polynomial whose degree is less or equal than
$j_{0} + \hdots + j_{n}$ for any $0 \leq j_{0},\hdots,j_{n}$.
\end{lem}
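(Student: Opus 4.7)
The plan is to prove $\deg_\lambda a_{j_0,\ldots,j_n}\leq j_0+\cdots+j_n$ by induction on $N:=j_0+\cdots+j_n$, by extracting from the defining identity ${\rm exp}(\hat u_\lambda f\partial/\partial x)(x)=x\circ\varphi_{\lambda,\Delta}$ an explicit triangular recursion for the coefficients of $\hat u_\lambda=\sum a_{j_0,\ldots,j_n}(\lambda)\,x^{j_0}\prod_k x_k^{j_k}$. Since $\varphi_0\in{\mathcal D}_f'$ one has $x\circ\varphi_0=x+f+f^2 r_0$ for some $r_0\in\convn$, and hence $x\circ\varphi_{\lambda,\Delta}=x+f+f^2(r_0+\lambda\Delta)$. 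Setting $X=\hat u_\lambda f\,\partial/\partial x$ and using $X^j(x)\in(f)$ for all $j\geq 1$, the Taylor expansion of the exponential, after division by $f$, yields
\[
\hat u_\lambda = 1+fr_0+\lambda f\Delta - \sum_{j\geq 2}Z_j,\qquad Z_j:=\frac{X^j(x)}{j!\,f}.
\]
The iteration $X^{j+1}(x)=\hat u_\lambda f\,\partial X^j(x)/\partial x$ produces the recursion $Z_j=(\hat u_\lambda/j)\bigl(f\,\partial Z_{j-1}/\partial x+Z_{j-1}\,\partial f/\partial x\bigr)$ starting from $Z_1=\hat u_\lambda$, and a direct induction on $j$ will show that every monomial of $Z_j$, written as a product of $\hat u_\lambda,\,\partial_x\hat u_\lambda,\ldots$ with polynomial coefficients in $f,\,\partial_x f,\ldots$, contains exactly $j$ $\hat u$-type factors, exactly $j-1$ $f$-type factors, and has total $x$-derivative order $j-1$.

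For the inductive step at $|\alpha|=N\geq 1$ (with $\alpha=(j_0,\ldots,j_n)$), the coefficient of $x^{j_0}\prod_k x_k^{j_k}$ in the displayed equation expresses $a_\alpha(\lambda)$ as the coefficient of $x^{j_0}\prod_k x_k^{j_k}$ in $1+fr_0+\lambda f\Delta$, a polynomial in $\lambda$ of degree $\leq 1\leq N$, minus $\sum_{j\geq 2}(Z_j)_\alpha$. Each monomial of $Z_j$ contributes, via splittings $\sum_i\beta_i+\sum_l\gamma_l=\alpha$, a product $\prod_i a_{\beta_i+k_i e_0}(\lambda)\cdot\prod_l f_{\gamma_l+m_l e_0}$, where $e_0=(1,0,\ldots,0)$ and the derivation orders satisfy $\sum k_i+\sum m_l=j-1$. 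The bookkeeping identity
\[
\sum_i|\beta_i+k_i e_0|+\sum_l|\gamma_l+m_l e_0| \;=\; |\alpha|+(j-1) \;=\; N+j-1,
\]
combined with the vanishing $f_0=f_{e_0}=0$, which forces $|\gamma_l+m_l e_0|\geq 1$ for any nonzero $f$-coefficient, and the presence of exactly $j-1$ $f$-factors in each monomial, will give $\sum_l|\gamma_l+m_l e_0|\geq j-1$ and therefore $\sum_i|\beta_i+k_i e_0|\leq N$.

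The most delicate point is to show that $a_\alpha$ itself never appears in $(Z_j)_\alpha$, so that the recursion genuinely determines $a_\alpha$: if some factor $a_{\beta_{i_0}+k_{i_0}e_0}$ equalled $a_\alpha$, saturation of the previous inequality would force every remaining $\hat u$-index to vanish and every $|\gamma_l+m_l e_0|$ to equal $1$; because $f_{e_0}=0$, each such factor must then be $f_{e_r}$ with $r\geq 1$, which contradicts the required identity $k_{i_0}e_0=\sum_l e_{i_l}$ since the left side lies along $e_0$ whereas the right side has only $e_{\geq 1}$-components and is nonzero (as $j-1\geq 1$). Consequently every $|\beta_i+k_i e_0|<N$, the inductive hypothesis yields $\deg_\lambda a_{\beta_i+k_i e_0}\leq|\beta_i+k_i e_0|$, and summing produces $\deg_\lambda(Z_j)_\alpha\leq\sum_i|\beta_i+k_i e_0|\leq N$, closing the induction. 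The base case $N=0$ reads $a_0(\lambda)\equiv 1$, immediate from the tangent-to-identity condition on $\varphi_{\lambda,\Delta}$ at the origin. I expect the main technical obstacles to be verifying the combinatorial invariants of $Z_j$ and correctly executing the degeneracy analysis that excludes the self-dependence on $a_\alpha$; once these are in place, the degree estimate is a mechanical consequence.
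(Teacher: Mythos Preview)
Your approach via the exponential recursion is workable in principle, but there is a genuine gap in the induction. You rule out $\beta_{i_0}+k_{i_0}e_0=\alpha$ and then assert that ``consequently every $|\beta_i+k_ie_0|<N$''. The implication fails: the saturation argument only excludes $\beta_{i_0}+k_{i_0}e_0=\alpha$, not $|\beta_{i_0}+k_{i_0}e_0|=N$ with a \emph{different} multi-index. Concretely, take $n=1$, $\alpha=(0,2)$, $j=2$: the monomial $\hat u_\lambda\, f\,\partial_x\hat u_\lambda$ in $Z_2$ contributes (via $\gamma_1=e_1$, $\beta_1=0$, $\beta_2=e_1$, $k_2=1$) a term proportional to $a_0\,a_{(1,1)}\,f_{e_1}$, and $|(1,1)|=2=N$, so the inductive hypothesis is unavailable for $a_{(1,1)}$ when treating $a_{(0,2)}$. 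The remedy is not hard: your own saturation analysis shows that whenever $|\beta_{i_0}+k_{i_0}e_0|=N$, the $e_0$-component of $\beta_{i_0}+k_{i_0}e_0$ equals $j_0+(j-1)>j_0$. Hence a secondary induction at each level $N$, treating the multi-indices $\alpha$ in order of decreasing $j_0$, closes the argument. (Incidentally, the claim $x\circ\varphi_0=x+f+f^2r_0$ is not literally true---$x\circ{\rm exp}(f\partial_x)-x-f\notin(f^2)$ in general---but what your argument actually uses is only that $(x\circ\varphi_{\lambda,\Delta}-x)/f$ has $\lambda$-degree $\le 1$, which is correct.)

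The paper's proof is entirely different and much shorter. It conjugates $\varphi_{1/\lambda,\Delta}$ by the dilation $(x,x_1,\ldots,x_n)\mapsto(\lambda x,\lambda x_1,\ldots,\lambda x_n)$ to obtain a family $\tau_\lambda$ whose infinitesimal generator, normalized as $\lambda^{\nu-1}\tilde f\,\bigl(\sum b_\alpha(\lambda)x^\alpha\bigr)\partial/\partial x$ with $\tilde f=f(\lambda\,\cdot)/\lambda^{\nu}$, has coefficients $b_\alpha$ that are entire in $\lambda$ (by undetermined coefficients, since $x\circ\tau_\lambda$ extends holomorphically across $\lambda=0$). The conjugation identity forces $b_\alpha(\lambda)=a_\alpha(1/\lambda)\,\lambda^{|\alpha|}$; two entire functions related this way make $a_\alpha$ a polynomial of degree at most $|\alpha|$. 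This rescaling trick bypasses the recursion entirely, whereas your combinatorial route, once repaired, reaches the same bound through substantially more bookkeeping.
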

\begin{proof}[Sketch of proof]
We consider the diffeomorphism
\[ \tau_{\lambda} = (x/\lambda, x_{1}/\lambda, \hdots , x_{n}/\lambda) \circ
\varphi_{1/\lambda, \Delta} \circ (\lambda x, \lambda x_{1}, \hdots , \lambda x_{n}) . \]
Denote $\nu$ the multiplicity of $f$ at $0$.
Let $\tilde{f}=f(\lambda x, \lambda x_{1} , \hdots ,\lambda x_{n})/\lambda^{\nu}$. We have
\[ \left({ \frac{x \circ \tau_{\lambda} - x}{\lambda^{\nu-1} \tilde{f}} }\right) (0,\hdots,0)  =
\left({  \frac{x \circ \varphi_{0} - x}{f} }\right) (0,\hdots,0) \]
for any $\lambda \in {\mathbb C}$. Again by using undetermined coefficients we obtain
\[ \log \tau_{\lambda} = \lambda^{\nu-1} \tilde{f}
\left({
\sum_{0 \leq j_{0},\hdots,j_{n}}
b_{j_{0},\hdots,j_{n}}(\lambda) {x}^{j_{0}} \prod_{k=1}^{n} x_{k}^{j_{k}}
}\right) \]
where $b_{j_{0},\hdots,j_{n}}$ is an entire function for $0 \leq j_{0},\hdots,j_{n}$.
This implies that
\[ a_{j_{0},\hdots,j_{n}} ( 1 /\lambda) \lambda^{j_{0}+\hdots+j_{n}}
= b_{j_{0},\hdots,j_{n}}(\lambda) \]
for any $0 \leq j_{0},\hdots,j_{n}$. Since the functions involved are entire functions then
$a_{j_{0},\hdots,j_{n}}$ is a polynomial such that $\deg a_{j_{0},\hdots,j_{n}} \leq \sum_{k=0}^{n} j_{k}$
for any $0 \leq j_{0},\hdots,j_{n}$.
\end{proof}
\begin{pro}
\label{pro:polbnd}
Fix $\varphi_{0} \in {\mathcal D}_{f}'$ and $\Delta \in \convn$.
Then the homological associated to ${\rm exp}(f \partial/\partial{x})$
and $\varphi_{\lambda, \Delta}$ is of the form
\[\partial{\alpha}/\partial{x} = \sum_{0 \leq j_{0},\hdots,j_{n}} d_{j_{0},\hdots,j_{n}}
(\lambda){x}^{j_{0}} \prod_{k=1}^{n} x_{k}^{j_{k}} \]
where $d_{j_{0},\hdots,j_{n}} \in {\mathbb C}[\lambda]$ satisfies
$\deg d_{j_{0},\hdots,j_{n}} \leq j_{0} + \hdots + j_{n}+\nu(f)$ for any $0 \leq j_{0},\hdots,j_{n}$
($\nu (f)$ is the multiplicity of $f$ at $0$).
\end{pro}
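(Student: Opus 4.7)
The plan is to derive the degree bound by propagating the bound of Lemma \ref{lem:bdddeg} through two algebraic operations: inverting the unit $\hat{u}_2(\lambda)$ associated with $\log \varphi_{\lambda, \Delta} = \hat{u}_2(\lambda) f \partial/\partial x$, and then dividing by $f$. Throughout, write $|j| = j_0 + \cdots + j_n$ and $\nu = \nu(f)$.

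By Lemma \ref{lem:bdddeg}, expanding $\hat{u}_2 = \sum_{j} a_{j_0, \ldots, j_n}(\lambda) x^{j_0} \prod_{k=1}^n x_k^{j_k}$, each coefficient $a_j$ is a polynomial in $\lambda$ of degree at most $|j|$. Moreover $a_0 \equiv 1$ and $a_j \equiv 0$ for $1 \leq |j| < \nu$, since $\varphi_{\lambda, \Delta} \in \mathcal{D}_f'$ forces $\hat{u}_2 - 1 \in (f) \subset \mathfrak{m}^\nu$. Then from
\[
1 - \frac{1}{\hat{u}_2} = \sum_{r \geq 1} (-1)^{r+1} (\hat{u}_2 - 1)^r,
\]
only finitely many $r$ contribute to the coefficient $N_j(\lambda)$ of $x^j$ in $1 - 1/\hat{u}_2$, since $(\hat{u}_2 - 1)^r \in \mathfrak{m}^{r\nu}$. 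Each contribution is a sum of products of $r$ factors $a_{j^{(l)}}(\lambda)$ with $\sum_l j^{(l)} = j$, hence a polynomial in $\lambda$ of degree at most $\sum_l |j^{(l)}| = |j|$. Summing, $\deg N_j \leq |j|$.

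Next I set up the homological equation as $fK = 1 - 1/\hat{u}_2$, write $f = \sum_{|\alpha| \geq \nu} c_\alpha x^\alpha$ and $K = \sum_k d_k(\lambda) x^k$, and compare coefficients of $x^j$:
\[
N_j(\lambda) = \sum_{\substack{\alpha + k = j \\ |\alpha| \geq \nu}} c_\alpha\, d_k(\lambda), \qquad |j| \geq \nu.
\]
I would then show by induction on $m = |k|$ that $\deg d_k \leq m + \nu$. Given the bound for all $|k'| < m$, the equations of level $|j| = m + \nu$ involve only $d_k$ with $|k| \leq m$; subtracting the inductively bounded contributions from $|k| < m$ (of $\lambda$-degree at most $m - 1 + \nu$) leaves
\[
\sum_{|k| = m} c_{j - k}\, d_k(\lambda) = P_j(\lambda), \qquad |j| = m + \nu,
\]
with each $P_j$ of degree at most $m + \nu$ in $\lambda$. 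The coefficient matrix $M = (c_{j-k})_{|j| = m+\nu, |k| = m}$ is precisely that of multiplication by the initial form $f_\nu = \sum_{|\alpha| = \nu} c_\alpha x^\alpha$, regarded as a $\mathbb{C}$-linear map from the space of degree-$m$ homogeneous polynomials in $(x, x_1, \ldots, x_n)$ to the space of degree-$(m+\nu)$ ones. Since $\mathbb{C}[x, x_1, \ldots, x_n]$ is a domain and $f_\nu \neq 0$, this map is injective.

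The main obstacle is closing this last step without losing control of the $\lambda$-degree: I need a left inverse of $M$ whose entries lie in $\mathbb{C}$, not in $\mathbb{C}[\lambda]$. Injectivity of $M$ as a $\mathbb{C}$-linear map yields such a left inverse (for instance $(M^\top M)^{-1} M^\top$). Applying it to the vector $(P_j)_{|j| = m + \nu}$ expresses each $d_k$ with $|k| = m$ as a $\mathbb{C}$-linear combination of the $P_j$'s, which forces $\deg d_k \leq m + \nu$ and completes the induction.
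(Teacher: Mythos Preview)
Your argument is correct and follows the same route as the paper: start from the degree bound of Lemma~\ref{lem:bdddeg} on the coefficients of $\hat{u}_{\lambda}$, propagate it to $1-1/\hat{u}_{\lambda}$, and then through the division by $f$. The paper's proof simply asserts this last passage (``by Lemma~\ref{lem:bdddeg} we deduce \ldots''); your explicit treatment of the division step, using that multiplication by the initial form $f_{\nu}$ is an injective $\mathbb{C}$-linear map on homogeneous polynomials and hence admits a $\lambda$-independent left inverse, is exactly the detail the paper leaves to the reader.
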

\begin{proof}
As a consequence of lemma \ref{lem:bdddeg} the homological equation
\[ \frac{\partial{\alpha}}{\partial{x}} =
\left({ \frac{(\log \varphi_{\lambda, \Delta})(x)/f -1}{(\log \varphi_{\lambda, \Delta})(x)/f} }\right) \frac{1}{f} \]
associated to ${\rm exp}(f \partial/\partial{x})$ and $\varphi_{\lambda, \Delta}$ is of the form
\[ \frac{\partial{\alpha}}{\partial{x}}=
\frac{\sum  c_{j_{0},\hdots,j_{n}}(\lambda){x}^{j_{0}} \prod_{k=1}^{n} x_{k}^{j_{k}}  }{f}
= \sum_{0 \leq j_{0},\hdots,j_{n}} d_{j_{0},\hdots,j_{n}}(\lambda){x}^{j_{0}} \prod_{k=1}^{n} x_{k}^{j_{k}}\]
since $(\log \varphi_{\lambda,\Delta})(x) - f \in ({f}^{2})$; this is a consequence of
$\varphi_{\lambda,\Delta} \in {\mathcal D}_{f}'$ for $\lambda \in {\mathbb C}$.
By lemma \ref{lem:bdddeg} we deduce that   $d_{j_{0},\hdots,j_{n}}$ is a polynomial such that
$\deg d_{j_{0},\hdots,j_{n}} \leq \nu(f) + \sum_{k=0}^{n} j_{k}$.
\end{proof}
The structure of the homological equation makes useful the next theorem.
\begin{pro}[\cite{PM2}]
\label{pro:PM}
Let
\[
\hat{P} = \sum_{0 \leq j_{1},\hdots,j_{m}} P_{j_{1},\hdots,j_{m}}(\lambda) \prod_{k=1}^{m} y_{j}^{j_{k}}
\]
where $P_{j_{1},\hdots,j_{m}} \in {\mathbb C}[\lambda]$ and
$\deg P_{j_{1},\hdots,j_{m}} \leq A\sum_{k=1}^{m} j_{k} +B$ for some $A, B$ in ${\mathbb R}$ and all
$0 \leq j_{1},\hdots,j_{m}$. Then either
$\hat{P}(\lambda,y_{1},\hdots,y_{m})$ is convergent in a neighborhood of $y_{1}=\hdots=y_{m}=0$  or
$\hat{P}(\lambda) \not \in {\mathbb C}\{y_{1},\hdots,y_{m}\}$ for any $\lambda \in {\mathbb C}$
outside a polar set.
\end{pro}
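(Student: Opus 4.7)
The plan is to establish the dichotomy by showing that if the set
$E = \{\lambda \in \mathbb{C} : \hat{P}(\lambda, \cdot) \in \mathbb{C}\{y_1,\ldots,y_m\}\}$
is non-polar, then $\hat{P}$ is already a convergent power series in $(\lambda, y_1, \ldots, y_m)$. The key tool is the Bernstein--Walsh inequality from classical potential theory, which converts pointwise bounds on a polynomial over a non-polar compact set into global bounds controlled by its degree. This matches perfectly with the hypothesis $\deg P_{j_1,\ldots,j_m} \leq A(j_1+\cdots+j_m)+B$.

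First I would stratify $E$ by growth. For each $M \in \mathbb{N}$, define
\[ E_M = \{\lambda \in \mathbb{C} : |P_{j_1,\ldots,j_m}(\lambda)| \leq M^{j_1+\cdots+j_m+1} \text{ for every } (j_1,\ldots,j_m)\}. \]
Each $E_M$ is closed (intersection of closed conditions on the continuous maps $\lambda \mapsto P_\alpha(\lambda)$), and convergence of $\hat{P}(\lambda, \cdot)$ in some polydisk is equivalent to membership in some $E_M$; hence $E = \bigcup_{M \geq 1} E_M$. Since polar sets form a $\sigma$-ideal, the non-polarity of $E$ forces some $E_{M_0}$ to be non-polar. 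A Borel set in $\mathbb{C}$ is polar precisely when its intersection with every closed disk has zero logarithmic capacity, so intersecting $E_{M_0}$ with a sufficiently large closed disk $\overline{D(0,R)}$ yields a compact non-polar set $K$ satisfying $\|P_\alpha\|_K \leq M_0^{|\alpha|+1}$ for all multi-indices $\alpha$ (writing $|\alpha|=j_1+\cdots+j_m$).

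Next I would invoke the Bernstein--Walsh inequality. The compact set $K$ has positive logarithmic capacity, so its Green function with pole at infinity, $g_K$, is a well-defined nonnegative subharmonic function on $\mathbb{C}$, vanishing quasi-everywhere on $K$ and locally bounded. For every polynomial $p$ of degree at most $d$,
\[ |p(\lambda)| \leq \|p\|_K \, e^{d\, g_K(\lambda)}. \]
Applied to $P_\alpha$ with $d = \deg P_\alpha \leq A|\alpha|+B$,
\[ |P_\alpha(\lambda)| \leq M_0^{|\alpha|+1}\, \exp\bigl((A|\alpha|+B)\, g_K(\lambda)\bigr). \]
On any compact $\Lambda \subset \mathbb{C}$ the function $g_K$ is bounded by some $G = G(\Lambda)$; setting $C_1 = M_0 e^{BG}$ and $C_2 = M_0 e^{AG}$, we obtain $|P_\alpha(\lambda)| \leq C_1\, C_2^{|\alpha|}$ uniformly for $\lambda \in \Lambda$.

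Choosing $\Lambda$ to be a closed disk around $0 \in \mathbb{C}$, the formal series $\sum_\alpha P_\alpha(\lambda) y^\alpha$ is dominated by $C_1 \prod_{k=1}^m (1 - C_2 |y_k|)^{-1}$, which is summable whenever $|y_k| < 1/C_2$ for each $k$. Therefore $\hat{P}$ converges uniformly on $\Lambda \times \{|y_k| < 1/C_2\}$, establishing convergence as a power series near the origin in $(\lambda, y_1, \ldots, y_m)$. The main obstacle is the transition from the abstract non-polarity of $E$ to the existence of a compact non-polar subset carrying a uniform polynomial growth bound; the stratification $E = \bigcup_M E_M$ combined with the $\sigma$-ideal property of polar sets and the inner regularity of logarithmic capacity on Borel sets is precisely what makes this step work. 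Once it is secured, the degree hypothesis $\deg P_\alpha \leq A|\alpha|+B$ is the exact ingredient that lets Bernstein--Walsh produce a geometric majorant of the right shape, and convergence of $\hat{P}$ follows automatically.
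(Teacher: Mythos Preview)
The paper does not give its own proof of this proposition; it is quoted from \cite{PM2}. Your argument is correct and is precisely the standard potential-theoretic proof of this dichotomy: stratify the convergence set by uniform Cauchy bounds, use the $\sigma$-ideal property of polar sets to extract a non-polar closed piece $E_{M_0}$, pass to a compact non-polar $K$, and apply the Bernstein--Walsh inequality to convert the sup-norm bound on $K$ into a global bound growing like $e^{(A|\alpha|+B)g_K(\lambda)}$, which is exactly the geometric rate needed for joint convergence. This is the approach one expects in \cite{PM2}, so there is no meaningful methodological difference to report.
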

  A polar set (see \cite{ran}) has measure zero and zero Haussdorff dimension.
Moreover, it is totally disconnected.
\begin{defi}
We define the derived equation
\[ \frac{\partial \alpha}{\partial x} =
\frac{\partial}{\partial \lambda} {\left({ \frac{-1}{(\log \varphi_{\lambda, \Delta})(x)} }\right)}_{|\lambda=0}
= \sum_{0 \leq j_{0},\hdots,j_{n}}
\frac{\partial{d_{j_{0},\hdots,j_{n}}}}{\partial \lambda}(0) x^{j_{0}} \prod_{k=1}^{n} x_{k}^{j_{k}}  \]
associated to ${\rm exp}(f \partial/\partial{x})$ and $\varphi_{\lambda, \Delta}$.
This equation is easier to handle than the homological equation since we can relate it to
\[ \frac{\partial \alpha}{\partial x} =
{\left({ \frac{\partial{(x \circ \varphi_{0})}}{\partial{x}} }\right)}^{-1}
{\left({ \frac{f}{(\log \varphi_{0})(x)} }\right)}^{2}
\Delta . \]
This equation will be called the reduced derived equation associated to
${\rm exp}(f \partial/\partial{x})$ and $\varphi_{\lambda, \Delta}$.
\end{defi}
The property ${\rm exp}(f \partial/\partial{x}) \stackrel{ut}{\sim}_{*} \varphi_{\lambda,\Delta}$
for any $\lambda \in {\mathbb C}$ implies convergence of special solutions
in $f_{N}=0$ for the homological equation associated to
${\rm exp}(f \partial/\partial{x})$ and $\varphi_{\lambda, \Delta}$
and any $\lambda \in {\mathbb C}$.
The dependence on $\lambda$ is holomorphic by proposition \ref{pro:PM}.
Thus there exists a solution of the derived equation that converges by restriction to $f_{N}=0$.
In order to prove that there exists $\lambda_{0} \in {\mathbb C}$ such that
${\rm exp}(f \partial/\partial{x}) \not \stackrel{ut}{\sim}_{*} \varphi_{\lambda_{0},\Delta}$
it suffices to show that the derived equation does not have solutions converging by
restriction to $f_{N}=0$. Indeed we can replace the derived equation with the simpler
reduced derived equation by the following proposition.
\begin{pro}
\label{pro:diffsim}
Fix $\varphi_{0} \in {\mathcal D}_{f}'$ and $\Delta \in \convn$.
We consider a union $E$ of some irreducible components of $f=0$.
Then the derived equation associated to ${\rm exp}(f \partial/\partial{x})$ and
$\varphi_{\lambda, \Delta}$ has a formal solution converging by restriction to $E$ if and only if
the reduced derived equation has a formal solution converging by restriction to $E$.
\end{pro}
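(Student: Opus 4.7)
The plan is to show that the right-hand sides $R_d$ and $R_r$ of the derived and reduced derived equations differ by a total $x$-derivative $\partial H/\partial x$, where $H$ belongs to the ideal $(f)$ of $\formn$. Since $f$ vanishes on $E$ we have $(f) \subseteq \sqrt{I(E)}$, and therefore any such $H$ converges by restriction to $E$ with $\kappa = 0$. Granted this, from a formal solution $\hat\alpha_r$ of the reduced derived equation converging by restriction to $E$, the series $\hat\alpha_d := \hat\alpha_r - H$ solves the derived equation with the same property, and conversely; the proposition then follows at once.

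To construct $H$ I would introduce $\hat{u}_{\lambda}$ and $\hat{v}$ via $\log \varphi_{\lambda,\Delta} = \hat{u}_{\lambda} f \partial/\partial x$ and $\hat{v} := \partial_{\lambda} \hat{u}_{\lambda}|_{\lambda=0}$, and work with the formal flow $\phi_t := \exp(t \log \varphi_0)$ for $t \in [0,1]$. Since the time form $dx/(\hat{u}_0 f)$ of $\log \varphi_0$ is invariant under its own flow, one obtains the identity
\[ \frac{\partial(x \circ \phi_t)}{\partial x} = \frac{(\hat{u}_0 f) \circ \phi_t}{\hat{u}_0 f} . \]
Linearizing the exponential map at $\log \varphi_0$ in the direction $\hat{v} f \partial/\partial x$ and matching the $x$-component with $\partial_{\lambda}(x \circ \varphi_{\lambda,\Delta})|_{\lambda=0} = f^2 \Delta$ produces, after using the identity above,
\[ R_r = \frac{1}{\hat{u}_0 f} \int_0^1 \left(\frac{\hat{v}}{\hat{u}_0}\right) \circ \phi_t \, dt . \]
Combining this with $R_d = \hat{v}/(\hat{u}_0^2 f)$ and the formula $\frac{d}{ds}[(\hat{v}/\hat{u}_0) \circ \phi_s] = [\hat{u}_0 f\, \partial(\hat{v}/\hat{u}_0)/\partial x] \circ \phi_s$, a double-integral telescope (after one more application of the time-form identity to pull $\partial/\partial x$ outside) converts the difference into
\[ R_d - R_r = -\frac{\partial}{\partial x}\left[ \int_0^1 (1-s)\, (\hat{v}/\hat{u}_0) \circ \phi_s \, ds \right], \]
which identifies $H$ as the bracketed expression, an element of $\formn$.

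It remains to check $H \in (f)$. Since $\varphi_0$ and $\varphi_{\lambda,\Delta}$ lie in ${\mathcal D}_{f}'$, we have $\hat{u}_{\lambda} f - f \in (f^2)$ for every $\lambda$, hence $\hat{u}_{\lambda} \equiv 1 \pmod{(f)}$; differentiating in $\lambda$ yields $\hat{v} \in (f)$, and so $\hat{v}/\hat{u}_0 \in (f)$. Because $\phi_s$ fixes $\{f=0\}$ pointwise it preserves the ideal $(f)$, so $(\hat{v}/\hat{u}_0) \circ \phi_s \in (f)$ for all $s \in [0,1]$; the same holds after the formal $s$-integration defining $H$. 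The main technical step is the integral representation of $R_r$: once the linearization of the exponential map and the time-form identity are assembled correctly, the rest of the argument — telescoping, sign-checking, and the membership $\hat v \in (f)$ — is mechanical.
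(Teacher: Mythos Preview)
Your argument is correct and, at bottom, coincides with the paper's: both show that the difference of the two right-hand sides is $\partial H/\partial x$ for an explicit $H\in(f)$, so that adding or subtracting $H$ transports convergence-by-restriction between the two equations. In fact your $H=\int_0^1(1-s)\,(\hat v/\hat u_0)\circ\phi_s\,ds$ equals the paper's explicit solution $\sum_{j\ge 1}(\log\varphi_0)^{j-1}(u_1 f/\hat u_0)/(j+1)!$ term by term (expand $g\circ\phi_s=\sum_j s^j(\log\varphi_0)^j(g)/j!$ and integrate).

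The only real difference is in how the key identity for $R_r$ is obtained. The paper computes the $\lambda$-derivative of $x\circ\varphi_{\lambda,\Delta}$ via the Taylor expansion of the exponential, proves by induction a closed formula for the coefficients $c_j$, and then resums; you instead invoke the variation-of-the-exponential formula together with the invariance of the time form to write $R_r$ directly as a flow integral. Your route is more conceptual and avoids the combinatorial induction, at the cost of appealing to a linearization formula that you only sketch; the paper's route is entirely self-contained but heavier. Either way the substance --- the explicit $H$ and the membership $\hat v\in(f)$ forced by $\varphi_{\lambda,\Delta}\in{\mathcal D}_f'$ --- is identical.
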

The previous proposition implies proposition \ref{pro:linkder}.
\begin{proof}
Denote $\hat{u}_{\lambda}=[(\log \varphi_{\lambda,\Delta})(x)]/f$. Since
$x \circ \varphi_{\lambda,\Delta} - x \circ \varphi_{0} \in (f^{2})$ we can express
$\hat{u}_{\lambda}$ in the form
\[ \hat{u}_{\lambda} = \hat{u}_{0} + f \sum_{j=1}^{\infty} u_{j} \lambda^{j} \]
where $u_{j} \in \formn$ for any $j \in {\mathbb N}$. The derived equation is equal to
$\partial{\alpha}/\partial{x}=u_{1}/\hat{u}_{0}^{2}$. Now, we want to express
$(\partial{(x \circ \varphi_{\lambda,\Delta})}/\partial{\lambda})_{|\lambda=0}$
in terms of $u_{1}$. We have
\[ x \circ \varphi_{\lambda,\Delta} - \left({
 x + \sum_{j=1}^{\infty} \frac{(\log \varphi_{0})^{j}(x) + \lambda c_{j}}{j!} }\right)
\in (\lambda^{2}) \]
where $c_{1}=f^{2}u_{1}$ and
$c_{j+1} = (\log \varphi_{0})(c_{j}) + (fu_{1}/\hat{u}_{0}) (\log \varphi_{0})^{j+1}(x)$
for any $j \in {\mathbb N}$. From these formulas we can prove that
\[ c_{j} = \sum_{k=1}^{j}  {j \choose k-1} (\log \varphi_{0})^{k}(x) (\log \varphi_{0})^{j-k}
\left({ \frac{u_{1} f}{\hat{u}_{0}} }\right) \]
by induction. Remark that $(\log \varphi_{0})^{0}(u_{1}f / \hat{u}_{0}) = u_{1}f / \hat{u}_{0}$.
Therefore we obtain
\[ f^{2} \Delta = \sum_{k \geq 1, j \geq 0} \frac{1}{(j+k)!} {j+k \choose k-1}
(\log \varphi_{0})^{k}(x) (\log \varphi_{0})^{j}
\left({ \frac{u_{1} f}{\hat{u}_{0}} }\right) . \]
We simplify to get
\[ f^{2} \Delta = \sum_{k = 1}^{\infty} \frac{(\log \varphi_{0})^{k}(x)}{(k-1)!}
\sum_{j=0}^{\infty} \frac{(\log \varphi_{0})^{j}(u_{1}f/\hat{u}_{0})}{(j+1)!} . \]
Since $(\log \varphi_{0})(x \circ \varphi_{0}) = \sum_{k = 1}^{\infty} (\log \varphi_{0})^{k}(x)/(k-1)!$
then we have
\[ \sum_{j=0}^{\infty} \frac{(\log \varphi_{0})^{j}(u_{1}f/\hat{u}_{0})}{(j+1)!} =
{\left({ \frac{\partial{(x \circ \varphi_{0})}}{\partial{x}} }\right)}^{-1}
\frac{f}{\hat{u}_{0}} \Delta . \]
Now consider the equation
\[ \frac{\partial \alpha}{\partial{x}}=
\frac{1}{\hat{u}_{0} f}  \sum_{j=0}^{\infty} \frac{(\log \varphi_{0})^{j}(u_{1}f/\hat{u}_{0})}{(j+1)!}
= {\left({ \frac{\partial{(x \circ \varphi_{0})}}{\partial{x}} }\right)}^{-1}
\frac{\Delta}{\hat{u}_{0}^{2}} . \]
The last equation is the reduced derived equation; since the derived equation is
$\partial{\alpha}/\partial{x}=u_{1}/\hat{u}_{0}^{2}$ then it is enough to prove that
the equation
\[ \frac{\partial \alpha}{\partial{x}}=
\frac{1}{\hat{u}_{0} f}  \sum_{j=1}^{\infty} \frac{(\log \varphi_{0})^{j}(u_{1}f/\hat{u}_{0})}{(j+1)!}  \]
has a vanishing formal solution on $f=0$. That is clear since
\[ \hat{\alpha} =  \sum_{j=1}^{\infty} \frac{(\log \varphi_{0})^{j-1}(u_{1}f/\hat{u}_{0})}{(j+1)!} \]
is the desired solution.
\end{proof}
The reduced derived equation associated to
${\rm exp}(f \partial / \partial{x})$ and $\varphi_{\lambda,\Delta}$ is linear on $\Delta \in \convn$.
It is not convergent in general;
more precisely for $\Delta \not \equiv 0$ the right-hand side of the equation
belongs to $\convn$ if and only if $\log \varphi_{0}$ is convergent.
\section{Transport phenomenon}
\label{sec:tra}
In the next two sections we introduce the phenomena on $f$ producing the existence of
couples $\varphi_{1},\varphi_{2} \in {\mathcal D}_{f}$
such that $\varphi_{1} {\sim}_{*} \varphi_{2}$ but
$\varphi_{1} \not \stackrel{t}{\sim}_{*} \varphi_{2}$.
The first set of examples is contained in ${\mathcal D}_{f}$
for $f=x^{a} {(x-x_{1})}^{b}$. We relate the existence of u.t.f.
conjugations with properties of the reduced derived equation.

  Fix $f=x^{a} {(x-x_{1})}^{b} \in {\mathbb C}\{x,x_{1}\}$ for some
$(a,b) \in {\mathbb N}^{2}$. Throughout this section we denote $x_{1}$ by $y$.
Consider $\varphi \in {\mathcal D}_{f}' \subset \diff{up}{2}$.
Let $\hat{\alpha}_{\varphi} \in {\mathbb C}[[x,y]]$ be a solution of
the homological equation $E_{\varphi}$
associated to ${\rm exp}(f \partial / \partial{x})$ and $\varphi$.
Then the set of special solutions of
$E_{\varphi}$ is of the form $\hat{\alpha}_{\varphi} + {\mathbb C}[[y]]$. We define an operator
$S_{a,b}:{\mathcal D}_{f}' \to {\mathbb C}[[y]]$ given by
\[ S_{a,b}(\varphi) = \hat{\alpha}_{\varphi}(y,y) - \hat{\alpha}_{\varphi}(0,y) . \]
The definition of $S_{a,b}(\varphi)$ does not depend on the choice of $\hat{\alpha}_{\varphi}$.
\begin{pro}
\label{pro:lacoftra}
Fix $f=x^{a} {(x-y)}^{b}$ and $\varphi \in {\mathcal D}_{f}'$.
Then $S_{a,b}(\varphi)$ is convergent if and only if
${\rm exp}(f \partial/\partial{x}) \stackrel{t}{\sim}_{*} \varphi$.
\end{pro}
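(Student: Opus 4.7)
The plan is to deduce the proposition directly from Theorem \ref{teo:cartra} by unpacking what ``converges by restriction to $f_N=0$'' means in the very concrete setting $f=x^{a}(x-y)^{b}$ and exploiting the fact that the homological equation is already pole-free.

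First, I would observe that for $f=x^{a}(x-y)^{b}$ there is no fibered factor, so $f_F$ is a unit and $f_N=x^{a}(x-y)^{b}$; hence the t.f.\ condition along $f_F=0$ in Theorem \ref{teo:cartra} is vacuous. Next, since both $\exp(f\partial/\partial x)$ and $\varphi$ lie in $\mathcal{D}_{f}'$, we can write $\log\varphi = (1+f\hat v)f\partial/\partial x$ and the homological equation reduces to $\partial\alpha/\partial x = \hat K$ for some $\hat K\in\mathbb{C}[[x,y]]$. Any formal primitive therefore belongs to $\mathbb{C}[[x,y]]$, and two such primitives differ by an element of $\mathbb{C}[[y]]$; I would verify (using Lemma \ref{lem:spedec} with $f_F=1$) that this is exactly the ambiguity in choosing a special solution. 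So the general special solution is $\tilde\alpha = \hat{\alpha}_{\varphi} + \hat\xi(y)$ with $\hat\xi\in\mathbb{C}[[y]]$.

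Next I would translate the convergence-by-restriction condition to each of the two irreducible components of $f_N=0$. By Definition \ref{def:conv}, $\tilde\alpha$ converges by restriction to $\{x=0\}$ iff there exists $\kappa_{0}\in\mathbb{C}\{x,y\}$ with $\tilde\alpha-\kappa_{0}\in(x)$, which (evaluating at $x=0$) is equivalent to $\hat{\alpha}_{\varphi}(0,y)+\hat\xi(y)\in\mathbb{C}\{y\}$. In the same way, convergence by restriction to $\{x=y\}$ amounts to $\hat{\alpha}_{\varphi}(y,y)+\hat\xi(y)\in\mathbb{C}\{y\}$. Both conditions can be achieved by a single $\hat\xi$ iff the $\hat\xi$-independent difference $\hat{\alpha}_{\varphi}(y,y)-\hat{\alpha}_{\varphi}(0,y)=S_{a,b}(\varphi)$ is convergent: if $S_{a,b}(\varphi)\in\mathbb{C}\{y\}$, the choice $\hat\xi(y)=-\hat{\alpha}_{\varphi}(0,y)$ kills the first restriction and turns the second into $S_{a,b}(\varphi)$; conversely, if some $\hat\xi$ works, $S_{a,b}(\varphi)$ is a difference of two convergent germs in $y$.

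Combining these steps with Theorem \ref{teo:cartra} yields both implications. The only real obstacle is a bookkeeping one: being careful that ``converges by restriction to $f_N=0$'' in Theorem \ref{teo:cartra} is interpreted component by component (as Definition \ref{def:conv} is only stated for irreducible $\gamma$), and that a single $\hat\xi\in\mathbb{C}[[y]]$ must work simultaneously for both components $\{x=0\}$ and $\{x=y\}$. Once this is cleanly set up, the equivalence is the elementary observation that two formal series in $\mathbb{C}[[y]]$ can be simultaneously corrected to convergent ones by adding the same element of $\mathbb{C}[[y]]$ exactly when their difference is convergent.
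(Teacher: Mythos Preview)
Your proposal is correct and follows essentially the same approach as the paper: both directions are deduced from Theorem \ref{teo:cartra} by noting that the special solutions form $\hat{\alpha}_{\varphi}+\mathbb{C}[[y]]$ and that simultaneous convergence on $x=0$ and $x=y$ is possible iff the difference $S_{a,b}(\varphi)$ converges. The paper's version is simply terser, choosing directly the solution with $\hat{\alpha}_{\varphi}(0,y)\equiv 0$ for the forward implication.
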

\begin{proof}
Implication $(\Leftarrow)$. There exists a solution $\hat{\alpha}_{\varphi} \in {\mathbb C}[[x,y]]$
of the homological equation associated to ${\rm exp}(f \partial/\partial{x})$ and $\varphi$
such that $\hat{\alpha}_{\varphi}$ converges by restriction to $x(x-y)=0$ by theorem \ref{teo:cartra}.
Then $S_{a,b}(\varphi)$ belongs to ${\mathbb C}\{y\}$.

  Implication $(\Rightarrow)$. Let $\hat{\alpha}_{\varphi} \in {\mathbb C}[[x,y]]$ be the solution
of the homological equation associated to ${\rm exp}(f \partial/\partial{x})$ and $\varphi$
such that $\hat{\alpha}_{\varphi}(0,y) \equiv 0$. This implies
$\hat{\alpha}_{\varphi}(y,y)=S_{a,b}(\varphi)$.
Thus $\hat{\alpha}_{\varphi}$ converges in $x(x-y)=0$. We are done by theorem \ref{teo:cartra}.
\end{proof}
Let $\hat{\alpha}_{\varphi} \in {\mathbb C}[[x,y]]$ be a solution
of the homological equation associated to ${\rm exp}(f \partial/\partial{x})$ and $\varphi$.
Once we choose $\hat{\alpha}_{\varphi}(0,y)$ the solution is ``transported'' and
$\hat{\alpha}_{\varphi}(y,y)$ is determined.
There always exists $\hat{\sigma}_{g,j} \in \diffh{p}{2}$ which is t.f. along $x=jy$ and conjugates
${\rm exp}(f \partial/\partial{x})$ and $\varphi$ for any $j \in \{0,1\}$.
But if $S_{a,b}(\varphi)$ diverges then
$\hat{\sigma}_{g,0}$ is not t.f. along $x=y$ whereas $\hat{\sigma}_{g,1}$ is not t.f. along $x=0$.

Fix $f=x^{a} {(x-y)}^{b}$ and $\varphi_{0} \in {\mathcal D}_{f}'$.
Given $\Delta \in {\mathbb C}\{x,y\}$ consider a solution
$\hat{\alpha}_{\Delta} \in {\mathbb C}[[x,y]]$ of
the derived equation associated to ${\rm exp}(f \partial/\partial{x})$ and
$\varphi_{\lambda,\Delta}$. We
define the operator $S_{a,b,\varphi_{0}}:{\mathbb C}\{x,y\} \to {\mathbb C}[[y]]$ given by
\[ S_{a,b,\varphi_{0}}(\Delta)(y) = \hat{\alpha}_{\Delta}(y,y)- \hat{\alpha}_{\Delta}(0,y) . \]
The operator is well-defined. Moreover by proposition \ref{pro:diffsim}
we can replace in the definition of $S_{a,b,\varphi_{0}}(\Delta)$ the series
$\hat{\alpha}_{\Delta}$ by a formal solution of the reduced derived equation associated to
${\rm exp}(f \partial/\partial{x})$ and $\varphi_{\lambda,\Delta}$.
\begin{pro}
\label{pro:ptf1}
Fix $f=x^{a} {(x-y)}^{b}$ and $\varphi_{0} \in {\mathcal D}_{f}'$. We have
\[ \frac{\partial{S_{a,b}(\varphi_{\lambda,\Delta})}}{\partial{\lambda}}_{|\lambda=0} = S_{a,b,\varphi_{0}}(\Delta) \]
for any $\Delta \in {\mathbb C}\{x,y\}$.
Moreover if $S_{a,b,\varphi_{0}}({\mathbb C}\{x,y\}) \not \subset {\mathbb C}\{y\}$ then
there exists $\varphi \in {\mathcal D}_{f}'$ such that
${\rm exp}(f \partial/\partial{x}) {\sim}_{*} \varphi$
but ${\rm exp}(f \partial/\partial{x}) \not \stackrel{t}{\sim}_{*} \varphi$.
\end{pro}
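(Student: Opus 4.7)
My plan is to first establish the derivative identity by a direct computation on canonical representatives, and then to exploit the polynomial dependence of the homological equation on $\lambda$ via Propositions \ref{pro:polbnd} and \ref{pro:PM} to promote the existence of a single divergent $S_{a,b,\varphi_{0}}(\Delta)$ into divergence of $S_{a,b}(\varphi_{\lambda_{0},\Delta})$ at a generic $\lambda_{0}$.

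For the identity I would normalize both $\hat{\alpha}_{\varphi_{\lambda,\Delta}}$ and $\hat{\alpha}_{\Delta}$ so that they vanish at $x=0$. This normalization is harmless, since in the definitions of $S_{a,b}$ and $S_{a,b,\varphi_{0}}$ the $\hat{\alpha}(0,y)$ term is explicitly subtracted. Differentiating the homological equation associated to ${\rm exp}(f \partial/\partial x)$ and $\varphi_{\lambda,\Delta}$ with respect to $\lambda$ at $\lambda=0$ yields exactly the derived equation by its definition, so $(\partial \hat{\alpha}_{\varphi_{\lambda,\Delta}}/\partial \lambda)_{|\lambda=0}$ is a solution of the derived equation vanishing at $x=0$, and therefore coincides with $\hat{\alpha}_{\Delta}$. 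The identity then follows by substituting $x=y$ and using that $\partial/\partial \lambda$ commutes coefficient-by-coefficient with the evaluations $x \mapsto y$ and $x \mapsto 0$ on ${\mathbb C}[[x,y]]$.

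For the second statement, Proposition \ref{pro:polbnd} presents the right-hand side of the homological equation associated to ${\rm exp}(f \partial/\partial x)$ and $\varphi_{\lambda,\Delta}$ as $\sum d_{j_{0},j_{1}}(\lambda) x^{j_{0}} y^{j_{1}}$ with $\deg d_{j_{0},j_{1}} \leq j_{0}+j_{1}+\nu(f)$. Integrating in $x$ with the normalization $\hat{\alpha}(0,y) \equiv 0$ and specializing to $x=y$ I would obtain
\[ S_{a,b}(\varphi_{\lambda,\Delta})(y) = \sum_{n \geq 1} P_{n}(\lambda) y^{n} \]
with $\deg P_{n}$ bounded by an affine function of $n$. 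Proposition \ref{pro:PM} then yields the dichotomy: either this double series is convergent in a neighborhood of $(\lambda,y)=(0,0)$, or $S_{a,b}(\varphi_{\lambda,\Delta})(y) \not\in {\mathbb C}\{y\}$ for every $\lambda$ outside some polar set.

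To conclude, assuming $S_{a,b,\varphi_{0}}({\mathbb C}\{x,y\}) \not\subset {\mathbb C}\{y\}$ I pick $\Delta$ with $S_{a,b,\varphi_{0}}(\Delta) \not\in {\mathbb C}\{y\}$. By the identity of the first part this is the $\lambda$-derivative at $\lambda=0$ of $S_{a,b}(\varphi_{\lambda,\Delta})$, so joint convergence of the latter near the origin is impossible (it would force convergence of this $\lambda$-derivative in $y$ by the usual Cauchy estimates). We must therefore be in the second alternative of the dichotomy, and any $\lambda_{0}$ outside the polar set gives $\varphi := \varphi_{\lambda_{0},\Delta} \in {\mathcal D}_{f}'$ with $S_{a,b}(\varphi)$ divergent. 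All elements of ${\mathcal D}_{f}'$ share the same $\sim_{*}$-class, as noted at the start of the Polynomial Families section, so ${\rm exp}(f \partial/\partial x) \sim_{*} \varphi$, and Proposition \ref{pro:lacoftra} applied to $\varphi$ rules out $\stackrel{t}{\sim}_{*}$. The main technical point I anticipate is verifying that the polynomial-degree bound of Proposition \ref{pro:polbnd} propagates cleanly through the integration and the specialization $x=y$ so that Proposition \ref{pro:PM} applies with constants $(A,B)$ independent of the coefficient index; once this bookkeeping is secured, the argument assembles the previously proved results directly.
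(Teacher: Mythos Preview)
Your proposal is correct and follows essentially the same approach as the paper. The paper's proof is terser—it dismisses the first identity as ``true by definition'' and then invokes Proposition \ref{pro:polbnd}, Proposition \ref{pro:PM}, and Proposition \ref{pro:lacoftra} in exactly the order you do; your expansion of the derivative identity via the normalized solutions and your use of Cauchy estimates to exclude the convergent alternative in the dichotomy simply make explicit what the paper leaves to the reader.
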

\begin{proof}
The first part is true by definition. The proposition \ref{pro:polbnd} implies that
$S_{a,b}(\varphi_{\lambda,\Delta})$
is of the form $\sum_{j=1}^{\infty} \beta_{j}(\lambda) {y}^{j}$
where $\beta_{j} \in {\mathbb C}[\lambda]$ satisfies $\deg \beta_{j} \leq j+(a+b-1)$
for any $j \in {\mathbb N}$.
Suppose we have $\Delta \in {\mathbb C}\{x,y\}$ such that
$S_{a,b,\varphi_{0}}(\Delta) \not \in {\mathbb C}\{y\}$.
Thus we have that $S_{a,b}(\varphi_{\lambda,\Delta}) \not \in {\mathbb C}\{y\}$ for any
$\lambda \not \in E$ where $E$ is a polar set (prop. \ref{pro:PM}). Now choose
$\lambda_{0} \not \in E$ and define
$\varphi=\varphi_{\lambda_{0},\Delta}$. On the one hand
$\varphi  {\sim}_{*} {\rm exp}(f \partial/\partial{x})$
since $\varphi \in {\mathcal D}_{f}'$. On the other hand we have
$\varphi \not \stackrel{t}{\sim}_{*} {\rm exp}(f \partial/\partial{x})$
by proposition \ref{pro:lacoftra}.
\end{proof}
\section{Bad position with respect to $\partial/\partial{x}$}
\label{sec:bad}
Fix $f = {(x_{2}-xx_{1})}^{c} \in {\mathbb C}\{x,x_{1},x_{2}\}$
for some $c \in {\mathbb N}$. Throughout this section we denote
$x_{1}$ by $y$ and $x_{2}$ by $z$. Consider $\varphi \in {\mathcal D}_{f}'$.
Let $\hat{\alpha}_{\varphi} \in {\mathbb C}[[x,y,z]]$ be a solution of the
homological equation $E_{\varphi}$
associated to ${\rm exp}(f \partial{x})$ and $\varphi$. Then the set of special solutions of
$E_{\varphi}$ is of the form $\hat{\alpha}_{\varphi} + {\mathbb C}[[y,z]]$. We can express
$\hat{\alpha}_{\varphi}(x,y,xy)$ in the form
$\sum_{0 \leq j,k} \alpha_{j,k}(\varphi) {x}^{j} {y}^{k}$. We define an operator
$T_{c}:{\mathcal D}_{f}' \to {\mathbb C}[[x,y]]$ given by
\[ T_{c}(\varphi) = \sum_{0 \leq k < j} \alpha_{j,k}(\varphi) {x}^{j} {y}^{k} . \]
The definition of $T_{c}(\varphi)$ does not depend on the choice of $\hat{\alpha}_{\varphi}$.
\begin{pro}
Fix $f={(z-xy)}^{c}$ and $\varphi \in {\mathcal D}_{f}'$. Then
$T_{c}(\varphi)$ is convergent if and only if
${\rm exp}(f \partial/\partial{x}) \stackrel{t}{\sim}_{*} \varphi$.
\end{pro}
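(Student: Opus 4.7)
The plan is to imitate the proof of Proposition \ref{pro:lacoftra}, replacing the pair of lines $x(x-y)=0$ by the smooth but badly positioned hypersurface $\gamma=\{z=xy\}$. For $f=(z-xy)^{c}$ the factorization into fibered and non-fibered parts is $f_{N}=(z-xy)^{c}$ and $f_{F}\equiv 1$ (since $z-xy=0$ is not a union of orbits of $\partial/\partial{x}$), so the condition of being t.f.\ along $f_{F}=0$ in Theorem \ref{teo:cartra} is vacuous. Hence ${\rm exp}(f\partial/\partial{x})\stackrel{t}{\sim}_{*}\varphi$ is equivalent to the existence of a special solution $\hat{\alpha}_{\varphi}\in{\mathbb C}[[x,y,z]]$ of $E_{\varphi}$ that converges by restriction to $\gamma$, i.e.\ such that $\hat{\alpha}_{\varphi}(x,y,xy)$ lies in ${\mathbb C}\{x,y\}$.

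The key step is to analyse the ambiguity in the choice of special solution. Any two special solutions of $E_{\varphi}$ differ by an element $g\in{\mathbb C}[[y,z]]$ (since $\partial/\partial{x}$ annihilates exactly such $g$), and under the parametrization $(x,y)\mapsto(x,y,xy)$ of $\gamma$ this shifts the restriction by $g(y,xy)=\sum g_{j,k}\,x^{k}y^{j+k}$, a series whose monomials $x^{a}y^{b}$ all satisfy $b\geq a$. Consequently the strictly lower-triangular piece $T_{c}(\varphi)=\sum_{0\leq k<j}\alpha_{j,k}(\varphi)x^{j}y^{k}$ is independent of the chosen $\hat{\alpha}_{\varphi}$, and the map $g(y,z)\mapsto g(y,xy)$ is a ${\mathbb C}$-linear bijection from ${\mathbb C}[[y,z]]$ onto the formal series in $(x,y)$ supported on $\{b\geq a\}$. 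A routine polyradius estimate using the correspondence $y^{j'}z^{k'}\leftrightarrow x^{k'}y^{j'+k'}$ shows that this bijection also identifies the convergent series on the two sides (the substitution $z\mapsto xy$ is weighted-homogeneous, so convergence on one side translates into convergence on the other with a harmless rescaling of radii).

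Both implications then follow. For $(\Leftarrow)$, Theorem \ref{teo:cartra} supplies a special solution with $\hat{\alpha}_{\varphi}(x,y,xy)\in{\mathbb C}\{x,y\}$; restricting the geometric coefficient bound to the strictly lower-triangular index set $\{k<j\}$ shows that $T_{c}(\varphi)$ is convergent. For $(\Rightarrow)$, assume $T_{c}(\varphi)\in{\mathbb C}\{x,y\}$, pick any special solution $\hat{\alpha}_{\varphi}$, and write $\hat{\alpha}_{\varphi}(x,y,xy)=T_{c}(\varphi)+R(x,y)$ with $R$ supported on $\{b\geq a\}$. The bijection of the previous paragraph produces a (formal) $g\in{\mathbb C}[[y,z]]$ with $g(y,xy)=-R$, and then $\hat{\alpha}_{\varphi}+g$ is a special solution whose restriction to $\gamma$ equals the convergent series $T_{c}(\varphi)$; Theorem \ref{teo:cartra} applied in its sufficient direction concludes. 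The only step that asks for verification is the convergence-preserving bijection in the previous paragraph, but it is elementary and presents no genuinely new difficulty beyond the analogous rearrangement used at the end of the proof of Proposition \ref{pro:lacoftra}.
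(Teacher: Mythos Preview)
Your proof is correct and follows the same route as the paper's. Both directions rest on Theorem~\ref{teo:cartra} (with $f_{F}\equiv 1$) and the observation that the ambiguity ${\mathbb C}[[y,z]]$ in the special solution, after substituting $z=xy$, affects only the monomials $x^{j}y^{k}$ with $j\leq k$; the paper simply writes down the explicit correction $\sum_{0\leq j\leq k}\alpha_{j,k}(\varphi)\,y^{k-j}z^{j}$ where you invoke the bijection abstractly. Your remark about the bijection being convergence-preserving is not actually needed: for $(\Leftarrow)$ one only uses that restricting a convergent double series to the index set $\{k<j\}$ keeps it convergent, and for $(\Rightarrow)$ the $g$ you produce is allowed to be formal.
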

\begin{proof}
Implication $(\Leftarrow)$. By theorem \ref{teo:cartra} there exists a special solution
$\hat{\alpha}_{\varphi}$ of $\partial{\alpha}/\partial{x} =1/f - 1/(\log \varphi)(x)$
converging by restriction to $f=0$. Thus $\hat{\alpha}_{\varphi}(x,y,xy)$ belongs
to ${\mathbb C}\{x,y\}$ and then $T_{c}(\varphi) \in {\mathbb C}\{x,y\}$.

Implication $(\Rightarrow)$. Let $\hat{\alpha}_{\varphi}$ be a special solution
of the homological equation associated to ${\rm exp}(f \partial/\partial{x})$ and $\varphi$.
We define
\[ \hat{\beta} = \hat{\alpha}_{\varphi} - \sum_{0 \leq j \leq k} \alpha_{j,k}(\varphi) y^{k-j} z^{j}. \]
Then $\hat{\beta}$ is  a solution of the homological equation. Moreover, since
$\hat{\beta}(x,y,xy)=T_{c}(\varphi)(x,y)$ then $\hat{\beta}$ converges by restriction to $f=0$.
We obtain ${\rm exp}(f \partial/\partial{x}) \stackrel{t}{\sim}_{*} \varphi$ by theorem \ref{teo:cartra}.
\end{proof}
Fix $f={(z-xy)}^{c}$ and $\varphi_{0} \in {\mathcal D}_{f}'$.
Given $\Delta \in {\mathbb C}\{x,y,z\}$ consider a formal solution
$\hat{\alpha}_{\Delta}=\sum_{j,k,l} \alpha_{j,k,l} x^{j} y^{k} z^{l}$ of
the derived equation associated to ${\rm exp}(f \partial/\partial{x})$
and $\varphi_{\lambda,\Delta}$.
Let $T_{c,\varphi_{0}}:{\mathbb C}\{x,y,z\} \to {\mathbb C}[[x,y]]$ be the operator given by
\[ T_{c,\varphi_{0}}(\Delta)(x,y) = \sum_{k < j} \alpha_{j,k,l} x^{j+l} y^{k+l} . \]
The operator is well-defined.
Again we can replace the derived equation by the reduced derived equation in the definition.
The next proposition is proved in an analogous way than proposition \ref{pro:ptf1}.
\begin{pro}
\label{pro:ptf2}
Fix $f={(z-xy)}^{c}$ and $\varphi_{0} \in {\mathcal D}_{f}'$. We have
\[ \frac{\partial{T_{c}(\varphi_{\lambda,\Delta})}}{\partial{\lambda}}_{|\lambda=0} =
T_{c,\varphi_{0}}(\Delta)  \]
for any $\Delta \in {\mathbb C}\{x,y,z\}$.
Moreover if $T_{c,\varphi_{0}}({\mathbb C}\{x,y,z\}) \not \subset {\mathbb C}\{x,y\}$ then
there exists $\varphi \in {\mathcal D}_{f}'$ such that
${\rm exp}(f \partial/\partial{x}) {\sim}_{*} \varphi$
but ${\rm exp}(f \partial/\partial{x}) \not \stackrel{t}{\sim}_{*} \varphi$.
\end{pro}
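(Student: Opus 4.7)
The plan is to imitate the proof of Proposition \ref{pro:ptf1} in the three-variable setting, using Proposition \ref{pro:polbnd} to control the polynomial-in-$\lambda$ structure of $T_{c}(\varphi_{\lambda,\Delta})$ and Proposition \ref{pro:PM} to locate the bad parameters outside a polar set.

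First I would verify the identity
\[ \frac{\partial T_{c}(\varphi_{\lambda,\Delta})}{\partial \lambda}_{|\lambda=0} = T_{c,\varphi_{0}}(\Delta) \]
by direct bookkeeping. Choose a $\lambda$-polynomial family $\hat{\alpha}_{\varphi_{\lambda,\Delta}}$ of solutions of the homological equation associated to ${\rm exp}(f \partial/\partial{x})$ and $\varphi_{\lambda,\Delta}$, normalized so that $\hat{\alpha}_{\varphi_{\lambda,\Delta}}(0,y,z) \equiv 0$. Since $\partial/\partial \lambda$ commutes with $\partial/\partial x$, the series $(\partial \hat{\alpha}_{\varphi_{\lambda,\Delta}}/\partial \lambda)_{|\lambda=0}$ is a formal solution of the derived equation and may be used as $\hat{\alpha}_{\Delta}$ in the definition of $T_{c,\varphi_{0}}(\Delta)$. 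Writing $\hat{\alpha}_{\varphi_{\lambda,\Delta}}(x,y,xy) = \sum \alpha_{j,k}(\lambda)\, x^{j} y^{k}$ and $\hat{\alpha}_{\Delta}(x,y,xy) = \sum \alpha_{j,k,l}\, x^{j+l} y^{k+l}$, the constraint $k<j$ cuts out the same monomials on both sides of the claimed identity. The result is independent of the choice of representative because two special solutions differ by an element of ${\mathbb C}[[y,z]]$, whose monomials all have $j=0$ and are therefore suppressed by the condition $k<j$.

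For the second assertion, Proposition \ref{pro:polbnd} (with $\nu(f)=2c$) guarantees that the $(j_{0},j_{1},j_{2})$-coefficient of the right-hand side of the homological equation is a polynomial in $\lambda$ of degree at most $j_{0}+j_{1}+j_{2}+2c$. Antidifferentiating in $x$ with the chosen boundary condition preserves this polynomial structure, and the substitution $z=xy$ collects, into the coefficient of $x^{j}y^{k}$, only finitely many terms coming from triples $(j_{0},j_{1},j_{2})$ of controlled size. Hence each coefficient of $T_{c}(\varphi_{\lambda,\Delta})$ is a polynomial in $\lambda$ whose degree is bounded linearly in $j+k$, putting $T_{c}(\varphi_{\lambda,\Delta})$ exactly in the setting of Proposition \ref{pro:PM} with $m=2$.

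Now assume $T_{c,\varphi_{0}}(\Delta) \notin {\mathbb C}\{x,y\}$ for some $\Delta \in {\mathbb C}\{x,y,z\}$. By the first part the $\lambda$-derivative at $\lambda=0$ of $T_{c}(\varphi_{\lambda,\Delta})$ is divergent, so $T_{c}(\varphi_{\lambda,\Delta})$ itself is not convergent as a series in $(\lambda,x,y)$ near the origin. Proposition \ref{pro:PM} then produces a polar set $E \subset {\mathbb C}$ such that $T_{c}(\varphi_{\lambda,\Delta}) \notin {\mathbb C}\{x,y\}$ for every $\lambda \notin E$. Picking $\lambda_{0} \notin E$ and setting $\varphi = \varphi_{\lambda_{0},\Delta}$, we obtain $\varphi \in {\mathcal D}_{f}'$, whence $\varphi {\sim}_{*} {\rm exp}(f \partial/\partial{x})$; the preceding proposition (convergence of $T_{c}(\varphi)$ equivalent to $\stackrel{t}{\sim}_{*}$-equivalence with ${\rm exp}(f \partial/\partial{x})$) then gives $\varphi \not\stackrel{t}{\sim}_{*} {\rm exp}(f \partial/\partial{x})$. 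The most delicate point I foresee is ensuring that the polynomial-in-$\lambda$ bound of Proposition \ref{pro:polbnd} survives both the $x$-integration and the substitution $z=xy$ uniformly in the indices; once the normalization $\hat{\alpha}_{\varphi_{\lambda,\Delta}}(0,y,z)\equiv 0$ is adopted, the rest is a faithful transcription of the proof of Proposition \ref{pro:ptf1}.
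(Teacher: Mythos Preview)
Your argument is correct and follows the same route the paper indicates (``proved in an analogous way than proposition \ref{pro:ptf1}''): use Proposition \ref{pro:polbnd} to get polynomial-in-$\lambda$ coefficients with linearly bounded degree, invoke Proposition \ref{pro:PM} to confine the convergent parameters to a polar set, pick $\lambda_{0}$ outside it, and conclude via the characterization of $\stackrel{t}{\sim}_{*}$ in terms of $T_{c}$. One small inaccuracy: $\nu((z-xy)^{c})=c$, not $2c$, since $z-xy$ has a linear term; this does not affect the argument, as only the existence of a linear degree bound matters for Proposition \ref{pro:PM}.
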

Here there is no transport phenomenon since
$\sharp \{ z_{0}-xy_{0}=0 \} \leq 1$ for $(y_{0},z_{0}) \neq (0,0)$.
We already know that the nature of fibered and non-fibered irreducible components of
$f=0$ is different. This case is hybrid since $z=xy$ is non-fibered but
contains the fibered line $y=z=0$. The lack of transversality of $z-xy=0$
with respect to $\partial / \partial x$ is to blame for the lack of t.f.
conjugations.

We can naturally increase the transversality between $z=xy$ and
$\partial/\partial{x}$ by considering
the blow-up $\pi:\widetilde{{\mathbb C}^{3}} \to {\mathbb C}^{3}$ with center
at the line $y=z=0$. Since the tangent cone of $z-xy=0$ is $z=0$ we consider
the chart $x=x$, $y=s$, $z=st$.
We obtain
\[ (f \circ \pi)(x,s,t)= {s}^{c} {(t-x)}^{c} . \]
Denote the point $(x,s,t)=(0,0,0)$ by $q$. The divisor
$s=0$ is a fibered irreducible component of $f \circ \pi=0$ whereas the strict transform $x=t$
of $f=0$ is transversal to
$\partial/\partial{x}$ at $q$. Since we can find a solution of the homological equation vanishing
on $t=x$ then there exists a t.f. $\hat{\sigma}_{\varphi}$ in
$\widehat{\rm Diff}_{p}(\widetilde{{\mathbb C}^{3}},q)$
conjugating ${\rm exp}(f \partial/\partial{x})$ and $\varphi$ for any
$\varphi \in {\mathcal D}_{f}'$. In spite of this there is no choice
in general of $\hat{\sigma}_{\varphi}$
such that $\pi \circ \hat{\sigma}_{\varphi} \circ \pi^{(-1)}$ extends to an element of $\diffh{p}{3}$.
The way $z-xy=0$ folds around $y=z=0$ is avoiding the existence of t.f. conjugations.
\section{Proof of the main theorem}
In this section we prove that the equivalence relations
${\sim}_{*}$ and $\stackrel{t}{\sim}_{*}$
do not define the same classes of equivalence. Some technical details
are postponed for the next sections.

Fix $v \in {\mathbb C}[[x,y]]$. We define
$L_{2}^{v}:{\mathbb C}[[x,y]] \to {\mathbb C}[[y]]$
given by
\[ L_{2}^{v}(g) = \hat{\alpha}_{g}(y,y) - \hat{\alpha}_{g}(0,y) \]
where $\hat{\alpha}_{g} \in {\mathbb C}[[x,y]]$ is a solution
of $\partial{\alpha}/\partial{x}=v g$.

Fix $v \in {\mathbb C}[[x,y,z]]$. We define
$L_{3}^{v}:{\mathbb C}[[x,y,z]] \to {\mathbb C}[[x,y]]$
given by
\[ L_{3}^{v}(g) = \sum_{k < j} \alpha_{j,k,l} x^{j+l} y^{k+l}  \]
where $\hat{\alpha}_{g}=\sum_{j,k,l}   \alpha_{j,k,l} x^{j} y^{k} z^{l} \in {\mathbb C}[[x,y,z]]$
is a solution of $\partial{\alpha}/\partial{x}=v g$.

The next propositions and prop. \ref{pro:exlodi}
will be proved in next sections.
\begin{pro}
\label{pro:mosiop2}
Fix $v \in {\mathbb C}[[x,y]]$.
Then $L_{2}^{v}({\mathbb C}\{x,y\}) \subset {\mathbb C}\{y\}$ implies $v \in {\mathbb C}\{x,y\}$.
\end{pro}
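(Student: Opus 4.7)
The plan combines a Banach--Steinhaus / Baire category argument with the classical inversion of the Hilbert matrix. For $r>0$, let $B_r$ denote the Banach space of bounded holomorphic functions on the bidisc $\{|x|,|y|<r\}$ with the supremum norm, and for $s>0$ let $C_s$ denote the analogous one-variable space. A direct calculation gives
\[
L_2^v(g)(y) = \int_0^y v(t,y)\,g(t,y)\,dt = \sum_{i,j,k,l\ge 0}\frac{v_{ij}\,g_{kl}}{i+k+1}\,y^{i+j+k+l+1},
\]
so every Taylor coefficient of $L_2^v(g)$ is a finite linear expression in the Taylor coefficients of $g$.

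First I would establish a uniform bound $\|L_2^v(g)\|_s\le C\|g\|_r$ on some $B_r$. By hypothesis the sets
\[
F_{n,m} = \{\, g \in B_r : L_2^v(g) \in C_{1/n} \text{ and } \|L_2^v(g)\|_{1/n}\le m \,\}
\]
cover $B_r$, and each is closed: if $g_\nu\to g$ in $B_r$ with $g_\nu\in F_{n,m}$, then Cauchy's estimates force coefficient-wise convergence of $L_2^v(g_\nu)$ to $L_2^v(g)$, while Montel's theorem extracts a locally uniformly convergent subsequence whose limit (bounded by $m$ on $|y|<1/n$) must coincide with $L_2^v(g)$. Baire's theorem then supplies some $F_{n,m}$ containing a ball $\{g_0+h : \|h\|_r<\delta\}$, and the standard translation trick $\|L_2^v(h)\|_{1/n}\le \|L_2^v(g_0+h)\|_{1/n}+\|L_2^v(g_0)\|_{1/n}\le 2m$ delivers the uniform bound after rescaling.

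With the uniform bound at hand, I would plug in the test functions $g_n(x,y)=(x/r)^n\in B_r$, each of norm one. This yields
\[
L_2^v(g_n)(y) = r^{-n}\sum_{i,j\ge 0}\frac{v_{ij}}{i+n+1}\,y^{i+j+n+1},
\]
and Cauchy's estimates produce, for every $n,d\ge 0$,
\[
\Bigl|\sum_{i+j=d}\frac{v_{ij}}{i+n+1}\Bigr| \le C\,r^n\big/s^{n+d+1}.
\]
For fixed $d$, set $w_i=v_{i,d-i}$ for $0\le i\le d$; the choices $n=0,1,\ldots,d$ give a linear system $H^{(d)}w=T$ whose coefficient matrix is the $(d+1)\times(d+1)$ Hilbert matrix $H^{(d)}_{n,i}=1/(i+n+1)$ and whose right-hand side satisfies $\|T\|_\infty\le C'\max(1,(r/s)^d)/s^{d+1}$. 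The classical explicit formula for $(H^{(d)})^{-1}$ in terms of binomial coefficients bounds its entries by $A^d$ for a universal constant $A$. Inverting then gives $|v_{i,d-i}|\le C''\,B^{d}$ for some $B$ independent of $d$, and this geometric bound on the coefficients forces $v\in\conv$.

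The main obstacle is the Banach--Steinhaus step: closedness of the sets $F_{n,m}$ is not formal, since it requires matching coefficient-wise convergence (via Cauchy's estimates) with a uniform norm bound on the image (via Montel's theorem), and the passage from ``some $F_{n,m}$ has nonempty interior'' to a genuine operator bound has to be executed linearly by translation rather than by a black-box symmetric-convex appeal. Once that analytic step is carried out, the Hilbert matrix inversion is a purely combinatorial argument using the explicit inverse formula, and no further subtleties arise.
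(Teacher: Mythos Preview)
Your proposal is correct and follows essentially the same two-step strategy as the paper: obtain a uniform exponential bound on the coefficient functionals via some form of the uniform boundedness principle, then test on the monomials $x^{n}$ and invert the resulting Hilbert matrix to bound the Taylor coefficients of $v$ geometrically. The only differences are cosmetic: the paper works in the $\ell^{1}$-coefficient space $B^{2}$ and applies Banach--Steinhaus directly to the scalar functionals $L_{2,j}^{v}$ (which makes the closedness step trivial and avoids your Montel/closed-graph detour), and it quotes the asymptotic spectral norm $\|(\mathrm{Hilb}^{k})^{-1}\|_{2}\sim (1+\sqrt{2})^{4k}/(K\sqrt{k})$ rather than the explicit entrywise formula, but either route yields the same $|v_{l,m}|\le C\,B^{\,l+m}$ estimate.
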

\begin{pro}
\label{pro:mosiop3}
Fix $v \in {\mathbb C}[[x,y,z]]$.
Then $L_{3}^{v}({\mathbb C}\{x,y,z\}) \subset {\mathbb C}\{x,y\}$ implies $v \in {\mathbb C}\{x,y,z\}$.
\end{pro}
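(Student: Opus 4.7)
The plan is to imitate the functional-analytic strategy the paper announces for proposition~\ref{pro:derlog} and uses for proposition~\ref{pro:mosiop2}: first promote the qualitative hypothesis into a Banach-space bound via a Baire / closed-graph step, then extract absolute convergence of the coefficients of $v$ by an argument modeled on the boundedness of the Hilbert matrix.

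For the first step, write ${\mathbb C}\{x,y,z\} = \bigcup_{r>0} B_r^{(3)}$ and ${\mathbb C}\{x,y\} = \bigcup_{s>0} B_s^{(2)}$ as countable strict inductive limits of Banach spaces equipped with norms $\|\sum c_I u^I\|_\rho = \sum |c_I|\rho^{|I|}$. Because the formula $\alpha_{j,k,l}(g) = (vg)_{j-1,k,l}/j$ makes each Taylor coefficient of $L_3^v(g)$ a continuous linear functional on $B_r^{(3)}$, the restriction $L_3^v\colon B_r^{(3)} \to {\mathbb C}[[x,y]]$ is continuous in the coefficient-wise topology. Applying Baire category to the cover $B_r^{(3)} = \bigcup_n L_3^{v,-1}(\{h: \|h\|_{1/n} \leq n\})$ (each level set is closed) together with the closed-graph theorem yields $s, C > 0$ with $\|L_3^v(g)\|_s \leq C\|g\|_r$ for every $g \in B_r^{(3)}$.

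For the second step, write $v = \sum v_{i,j,k}\, x^i y^j z^k$. Testing the bound on the monomial $g = x^a y^b z^c$ and unwinding the definition of $L_3^v$ produces, after dividing through by $s^{a+b+2c+1}$, the inequality
\[ \sum_{j + b \,\leq\, i + a} \frac{|v_{i,j,k}|}{i+a+1}\, s^{i+j+2k} \;\leq\; \frac{C}{s}\left(\frac{r}{s}\right)^{a+b}\left(\frac{r}{s^2}\right)^{c}. \]
Varying $a$ with $b = c = 0$ sweeps the constraint $j \leq i + a$ across all of $(i,j)$-space and produces a family of Hilbert-type weighted bounds $1/(i+a+1)$ with geometrically decaying right-hand sides $(r/s)^a$. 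The final step removes these weights: interpreting $(1/(i+a+1))_{i,a}$ as a Hilbert-type matrix, which is bounded on $\ell^2$ by Hilbert's classical inequality, and pairing the $a$-indexed bounds against an optimally chosen geometric test sequence in $a$, one extracts $\sum_{i,j,k} |v_{i,j,k}|\, (s')^{i+j+2k} < \infty$ for some $s' < s$, i.e.\ convergence of $v$ in a polydisc of radius $s'$.

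The main obstacle is the first step, since ${\mathbb C}\{x,y,z\}$ is only a strict inductive limit of Banach spaces, not Fr\'echet, so uniform boundedness does not apply directly to a family of operators; one must argue piece-by-piece on each $B_r^{(3)}$ and use the explicit coefficient formulas for $\alpha_{j,k,l}(g)$ to secure the needed closedness. Once this Banach-space bound is in hand the argument runs in parallel with the two-variable case (proposition~\ref{pro:mosiop2}), but the extra $z$-variable and the ``$J > K$'' projection built into $L_3^v$, responsible for the asymmetric summation constraint $j \leq i + a$, require additional bookkeeping to remain compatible with the Hilbert-matrix duality invoked in the final step.
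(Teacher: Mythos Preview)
Your second step contains two real gaps. First, the displayed inequality with $|v_{i,j,k}|$ on the left is not what testing on $g=x^{a}y^{b}z^{c}$ gives. When you expand $L_{3}^{v}(g)$, the coefficient of $x^{P}y^{Q}$ is a \emph{sum} over the $z$-index, namely $\sum_{K}\frac{v_{P-a-1-c-K,\,Q-b-c-K,\,K}}{P-c-K}$; the norm $\|\cdot\|_{s}$ therefore only controls $\sum_{P,Q}\bigl|\sum_{K}\cdots\bigr|s^{P+Q}$, not the sum of the individual $|v_{i,j,k}|$. Second, even granting your inequality, Hilbert's classical inequality is the wrong tool: it says the Hilbert matrix $H_{i,a}=1/(i+a+1)$ is \emph{bounded} on $\ell^{2}$, but you need the reverse implication, from bounds on $(Hw)_{a}$ to bounds on $w_{i}$. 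The infinite Hilbert matrix has $0$ in its spectrum and is not boundedly invertible, so no choice of geometric test sequence $(\mu_{a})$ will produce a lower bound $\sum_{a}\mu_{a}/(i+a+1)\geq c\lambda^{i}$; one checks that the left side is only of order $1/(i+1)$.

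The paper's proof avoids both problems by organising the coefficients of $v$ along the diagonals $\{(a-K,b-K,K):K\geq 0\}$ and, for each fixed $(a,b)$, writing an \emph{exact} finite linear system
\[
\mathrm{Hilb}^{a+d}\,(v_{-d,b-a-d,a+d},\ldots,v_{a,b,0})^{T}
=\bigl(L_{3,a+d+1,b}^{v}(x^{d}),\ldots,L_{3,2a+2d+1,b}^{v}(x^{a+2d})\bigr)^{T},
\]
with $d=\max(b-a,0)$. This isolates the coefficients without cancellation issues, and one then \emph{inverts} the finite Hilbert matrix using the asymptotic $\|(\mathrm{Hilb}^{k})^{-1}\|_{2}\sim (1+\sqrt{2})^{4k}/(K\sqrt{k})$ from \cite{Kalyabin}, exactly as in the two-variable case. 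Your step~1 is workable but heavier than necessary: the paper simply fixes the single Banach space $B^{3}$ (radius~$1$, $\ell^{1}$ of coefficients) and applies the uniform boundedness principle to the scalar functionals $L_{3,j,k}^{v}$, via lemma~\ref{lem:boupri2}, to get $\|L_{3,j,k}^{v}\|\leq C^{j+k}$.
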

The following theorems provide the examples for the Main Theorem.
\begin{teo}
\label{teo:latf2}
Fix $f=x^{a} {(x-y)}^{b}$. There exists $\varphi \in {\mathcal D}_{f}' \subset \diff{up}{2}$
such that ${\rm exp}(f \partial/\partial{x}) {\sim}_{*} \varphi$
but ${\rm exp}(f \partial/\partial{x}) \not \stackrel{t}{\sim}_{*} \varphi$.
\end{teo}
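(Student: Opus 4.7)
The plan is to chain the three key propositions already established into a short synthesis argument. By proposition \ref{pro:exlodi} applied to $f=x^{a}(x-y)^{b}$ (which indeed satisfies $f(0)=(\partial f/\partial x)(0)=0$), I can fix $\varphi_{0} \in {\mathcal D}_{f}'$ whose infinitesimal generator $\log \varphi_{0}$ is divergent. Taking the contrapositive of proposition \ref{pro:derlog}, the divergence of $\log \varphi_{0}$ forces $S_{a,b,\varphi_{0}}({\mathbb C}\{x,y\}) \not\subset {\mathbb C}\{y\}$, so there exists some $\Delta \in {\mathbb C}\{x,y\}$ whose image under the operator $S_{a,b,\varphi_{0}}$ is a genuinely divergent power series in $y$.

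With this $\Delta$ in hand, I would invoke the second half of proposition \ref{pro:ptf1}: its hypothesis $S_{a,b,\varphi_{0}}({\mathbb C}\{x,y\}) \not\subset {\mathbb C}\{y\}$ is precisely what the previous step provides, and its conclusion yields a $\varphi \in {\mathcal D}_{f}'$ with ${\rm exp}(f \partial / \partial x) \sim_{*} \varphi$ (automatic for every element of ${\mathcal D}_{f}'$, since the corresponding homological equation is pole-free and therefore special in the sense of theorem \ref{teo:UPD}) while ${\rm exp}(f \partial / \partial x) \not \stackrel{t}{\sim}_{*} \varphi$.

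Concretely, the recipe buried in the proof of proposition \ref{pro:ptf1} is to pick $\varphi = \varphi_{\lambda_{0},\Delta}$ for a value $\lambda_{0} \in {\mathbb C}$ lying outside the polar exceptional set furnished by proposition \ref{pro:PM}. That application is legitimate because proposition \ref{pro:polbnd} shows $S_{a,b}(\varphi_{\lambda,\Delta})$ to be a formal series in $y$ whose coefficients are polynomials in $\lambda$ with controlled degree, so the dichotomy of proposition \ref{pro:PM} applies and the divergence detected at $\lambda=0$ through the derived equation persists for generic $\lambda_{0}$; proposition \ref{pro:lacoftra} then converts that divergence of $S_{a,b}(\varphi_{\lambda_{0},\Delta})$ into the failure of ${\rm exp}(f \partial / \partial x) \stackrel{t}{\sim}_{*} \varphi_{\lambda_{0},\Delta}$.

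As presented, the theorem is a synthesis step and has no novel obstacle: every genuinely difficult point has been pushed into earlier statements. The hard input is proposition \ref{pro:derlog}, whose proof (postponed) rests on the uniform boundedness principle together with spectral properties of Hilbert matrices; within the present argument, the only subtlety is the passage from the infinitesimal (derived) divergence at $\lambda=0$ to an actual $\lambda_{0}$ witnessing the failure of transversal formality, and this is handled cleanly by the holomorphic/polar dichotomy of proposition \ref{pro:PM}.
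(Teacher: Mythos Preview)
Your proposal is correct and mirrors the paper's proof exactly: pick $\varphi_{0}\in{\mathcal D}_{f}'$ with divergent $\log\varphi_{0}$ via proposition~\ref{pro:exlodi}, apply the contrapositive of proposition~\ref{pro:derlog} to get $S_{a,b,\varphi_{0}}({\mathbb C}\{x,y\})\not\subset{\mathbb C}\{y\}$, and conclude via proposition~\ref{pro:ptf1}. Your additional unpacking of the mechanism inside proposition~\ref{pro:ptf1} (through propositions~\ref{pro:polbnd}, \ref{pro:PM}, and~\ref{pro:lacoftra}) is accurate and matches how that proposition was proved.
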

\begin{teo}
\label{teo:latf3}
Fix $f={(z-xy)}^{c}$. There exists $\varphi \in {\mathcal D}_{f}' \subset \diff{up}{3}$ such that
${\rm exp}(f \partial/\partial{x}) {\sim}_{*} \varphi$
but ${\rm exp}(f \partial/\partial{x}) \not \stackrel{t}{\sim}_{*} \varphi$.
\end{teo}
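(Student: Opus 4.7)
The plan is to combine the machinery of the preceding sections, channeling everything through Proposition \ref{pro:ptf2}: by that proposition it suffices to exhibit $\varphi_{0} \in {\mathcal D}_{f}'$ for which $T_{c,\varphi_{0}}({\mathbb C}\{x,y,z\}) \not\subset {\mathbb C}\{x,y\}$.

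First I would invoke Proposition \ref{pro:exlodi} with $f=(z-xy)^{c}$ (note $f$ and $\partial f/\partial x$ both vanish at $0$, so the hypothesis is satisfied) to obtain $\varphi_{0} \in {\mathcal D}_{f}'$ whose infinitesimal generator $\log \varphi_{0} = \hat{u}_{0} f \, \partial/\partial x$ is divergent. Since $f$ is a nonzero convergent power series, this forces the unit $\hat{u}_{0} \in {\mathbb C}[[x,y,z]]$ itself to be divergent.

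Next I would identify $T_{c,\varphi_{0}}$ with the operator $L_{3}^{v}$ of Section 7 for an explicit divergent $v$. By Proposition \ref{pro:diffsim}, $T_{c,\varphi_{0}}(\Delta)$ can be computed from any formal solution of the reduced derived equation
\[
\frac{\partial \alpha}{\partial x} = \left(\frac{\partial(x \circ \varphi_{0})}{\partial x}\right)^{-1} \frac{\Delta}{\hat{u}_{0}^{2}}.
\]
Setting $v = (\partial(x \circ \varphi_{0})/\partial x)^{-1}/\hat{u}_{0}^{2}$, this is an equation of the form $\partial \alpha / \partial x = v\, \Delta$, and the defining formulas of $T_{c,\varphi_{0}}$ and $L_{3}^{v}$ then coincide on ${\mathbb C}\{x,y,z\}$. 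Because $\varphi_{0}$ is analytic and $x \circ \varphi_{0} - x \in (f)$, the factor $\partial(x \circ \varphi_{0})/\partial x$ equals $1$ at the origin and is therefore a convergent unit, so $v$ differs from $1/\hat{u}_{0}^{2}$ by a convergent unit. Since $\hat{u}_{0}$ is a divergent formal unit, so is $v$.

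To close, I would apply Proposition \ref{pro:mosiop3} in contrapositive form: as $v \notin {\mathbb C}\{x,y,z\}$, we cannot have $L_{3}^{v}({\mathbb C}\{x,y,z\}) \subset {\mathbb C}\{x,y\}$, hence $T_{c,\varphi_{0}}({\mathbb C}\{x,y,z\}) \not\subset {\mathbb C}\{x,y\}$. Proposition \ref{pro:ptf2} then produces $\varphi \in {\mathcal D}_{f}'$ with ${\rm exp}(f \partial/\partial x) \sim_{*} \varphi$ but ${\rm exp}(f \partial/\partial x) \not \stackrel{t}{\sim}_{*} \varphi$. The hard part is not this assembly but the supporting results: constructing a divergent-generator $\varphi_{0}$ inside the rigid family ${\mathcal D}_{f}'$ (Proposition \ref{pro:exlodi}) and pushing divergence through the operator $L_{3}^{v}$ (Proposition \ref{pro:mosiop3}, which ultimately rests on the uniform boundedness principle and potential-theoretic estimates along the line $y=z=0$ folded into the surface $z=xy$).
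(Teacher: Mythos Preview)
Your proof is correct and follows the same route as the paper, which treats Theorem \ref{teo:latf2} explicitly and declares Theorem \ref{teo:latf3} analogous: pick $\varphi_{0}\in{\mathcal D}_{f}'$ with divergent infinitesimal generator via Proposition \ref{pro:exlodi}, identify $T_{c,\varphi_{0}}$ with $L_{3}^{v}$ for $v=(\partial(x\circ\varphi_{0})/\partial x)^{-1}\hat{u}_{0}^{-2}$ through Proposition \ref{pro:diffsim}, use Proposition \ref{pro:mosiop3} to force $L_{3}^{v}({\mathbb C}\{x,y,z\})\not\subset{\mathbb C}\{x,y\}$, and conclude with Proposition \ref{pro:ptf2}. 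One small inaccuracy in your closing commentary: Proposition \ref{pro:mosiop3} relies on the uniform boundedness principle together with Hilbert-matrix norm estimates, not on potential theory; the polar-set argument (Proposition \ref{pro:PM}) enters instead inside the proof of Proposition \ref{pro:ptf2}.
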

Obviously theorems \ref{teo:latf2} and \ref{teo:latf3}
imply theorems \ref{teo:intr2} and \ref{teo:intr3} respectively.
\begin{proof}[proof of proposition \ref{pro:derlog}]
We have
\[ S_{a,b,\varphi_{0}}({\mathbb C}\{x,y\}) \subset {\mathbb C}\{y\} \implies
L_{2}^{v}({\mathbb C}\{x,y\}) \subset {\mathbb C}\{y\} \]
where $v = {(\partial{(x \circ \varphi_{0})}/\partial{x})}^{-1}
{( f / (\log \varphi_{0})(x) )}^{2}$ by proposition \ref{pro:diffsim}.
Thus $\log  \varphi_{0}$ converges by proposition \ref{pro:mosiop2}.
\end{proof}
\begin{proof}[proof of theorems \ref{teo:latf2} and \ref{teo:latf3}]
We suppose that we are in the situation described in theorem \ref{teo:latf2}. Otherwise
the proof is analogous. Consider $\varphi_{0} \in {\mathcal D}_{f}'$ such that
$\log \varphi_{0}$ is divergent; it is possible by proposition \ref{pro:exlodi}
(the proof is in section \ref{sec:div}).
We obtain
$S_{a,b,\varphi_{0}}({\mathbb C}\{x,y\}) \not \subset {\mathbb C}\{y\}$
by prop. \ref{pro:derlog}. The result is a consequence of prop.  \ref{pro:ptf1}.
\end{proof}
\begin{proof}[proof of the Main Theorem]
Fix $f=x^{a} {(x-x_{1})}^{b}$ for some $(a,b) \in {\mathbb N}^{2}$.
Let $\varphi_{1} = {\rm exp}(f \partial/\partial{x})$.
There exists $\varphi_{2} \in {\mathcal D}_{f}' \subset \diff{up}{2}$
such that $\varphi_{1} \not \stackrel{t}{\sim}_{*} \varphi_{2}$ by theorem \ref{teo:latf2}.
We claim that there is no $\hat{\sigma} \in \diffh{}{2}$ such that it is t.f.
along $x(x-x_{1})=0$ and conjugates $\varphi_{1}$ and $\varphi_{2}$.
Suppose this is false. The series $x_{1} \circ \hat{\sigma} \in {\mathbb C}[[x_{1}]]$
is t.f. along $x=0$ and then $x_{1} \circ \hat{\sigma} \in {\mathbb C}\{ x_{1} \}$.
Let us consider $\sigma \in \diff{}{2}$ such that $x_{1} \circ \sigma = x_{1} \circ \hat{\sigma}$
and $x \circ \hat{\sigma} - x \circ \sigma \in (f^{2})$.
The mapping $\sigma \circ \varphi_{1} \circ \sigma^{(-1)}$
belongs to ${\mathcal D}_{f}'$ by the choice of $\sigma$. Since
\[ \log(\sigma \circ \varphi_{1} \circ \sigma^{(-1)}) =
\frac{f \circ \sigma^{(-1)}}{\partial{(x \circ \sigma^{(-1)})}/\partial{x}} \frac{\partial}{\partial{x}} \]
is convergent then
${\rm exp}(f \partial/\partial{x})$ and $\sigma \circ \varphi_{1} \circ \sigma^{(-1)}$
are conjugated by a normalized $\eta \in \diff{p}{2}$. Therefore
$\hat{\sigma} \circ \sigma^{(-1)} \circ \eta$ is a normalized t.f. element of $\diffh{p}{2}$
conjugating $\varphi_{1}$ and $\varphi_{2}$. That is contradictory with our choice of $\varphi_{1}$
and $\varphi_{2}$.

For $n \geq 1$ and $j \in \{1,2\}$ we define
\[ \varphi_{j,n}(x,x_{1},\hdots,x_{n}) = (x \circ \varphi_{j}(x,x_{1}),x_{1}, \hdots,x_{n}) . \]
Fix $n \in {\mathbb N}$.
Since $\varphi_{1,n}, \varphi_{2,n} \in {\mathcal D}_{f}' \subset \diff{up}{n+1}$
then $\varphi_{1,n} {\sim}_{*} \varphi_{2,n}$ by theorem \ref{teo:UPD}.
We claim that there does not exist $\hat{\sigma}_{n} \in \diffh{}{n+1}$ such that it is t.f.
along $x(x-x_{1})=0$ and conjugates $\varphi_{1,n}$ and $\varphi_{2,n}$.
Suppose it is false;
the property $\hat{\sigma}_{n} \{ x(x-x_{1})=0 \} = \{ x(x-x_{1})=0 \}$ implies that the first jet of
\[ \hat{\xi}_{n}(x,x_{1}) =
(x \circ \hat{\sigma}_{n}(x,x_{1},0,\hdots,0), x_{1} \circ \hat{\sigma}_{n}(x,x_{1},0,\hdots,0)) \]
is an invertible linear mapping and then $\hat{\xi}_{n} \in \diffh{}{2}$.
Clearly $\hat{\xi}_{n}$ is t.f. along $x(x-x_{1})=0$ and it satisfies
$\hat{\xi}_{n} \circ \varphi_{1} = \varphi_{2} \circ \hat{\xi}_{n}$.
That is contradictory with the first part of the proof.
\end{proof}
\begin{rem}
Let $f={(x_{2}-xx_{1})}^{c}$.
We can choose the examples $\varphi_{1},\varphi_{2}$ provided by the main theorem
in ${\mathcal D}_{f}' \subset \diff{up}{n+1}$ for any $n \geq 2$.
The proof is analogous to the previous one.
\end{rem}
\begin{rem}
Consider $f=x^{a} {(x-x_{1})}^{b}$.
Suppose that there exists $\hat{\sigma} \in \diffh{}{n+1}$
conjugating ${\rm exp}(f \partial / \partial x)$ and $\varphi \in {\mathcal D}'(f)$
such that $\hat{\sigma}$ is t.f. exactly along one irreducible component of $f=0$.
By changing slightly the previous proof we can show that
${\rm exp}(f \partial / \partial x)$ and $\varphi \in {\mathcal D}'(f)$ are not analytically
conjugated. Intuitively, if a homological equation has a solution that converges
in exactly one component then $S_{a,b}(\varphi)$ is divergent.
This is an obstruction to the existence of analytic conjugations.
\end{rem}
\section{Divergence of the infinitesimal generator}
\label{sec:div}
\begin{proof}[proof of prop. \ref{pro:exlodi}]
 Suppose $f_{N}(0) \neq 0$. Consider $\tau \in {\mathcal D}_{x_{1}} \subset \diff{}{2}$
such that $\log \tau$ is divergent (Voronin's paper in \cite{rus}).
Moreover $\log \tau$ is of the form $\hat{u}(x,x_{1}) x_{1} \partial/\partial{x}$
where $\hat{u}$ diverges.
Consider the mapping $\sigma:{\mathbb C}^{n+1} \to {\mathbb C}^{2}$ given by
$\sigma(x,x_{1},\hdots,x_{n})=(x,f_{F}(x_{1},\hdots,x_{n}))$.
Now $\varphi_{0} = \sigma^{(-1)} \circ \tau \circ \sigma$ is an element of $\diff{up}{n+1}$.
Moreover $\log \varphi_{0}$ is equal to $(\hat{u} \circ \sigma) f_{F} \partial/\partial{x}$.
As a consequence $\varphi_{0}$ is an element of ${\mathcal D}_{f}$ whose
infinitesimal generator is divergent.
Since $\hat{u} \circ \sigma$ is t.f. along $f=0$ there exists $u_{0} \in \convn$ such
that $(\hat{u} \circ \sigma)/f_{N} - u_{0} \in (f)$. The homological equation associated to
${\rm exp}(u_{0}f \partial{x})$ and ${\rm exp}(f \partial/\partial{x})$ is special.
Thus there exists $\xi \in \diff{p}{n+1}$ such that
\[ \xi \circ {\rm exp}(u_{0}f \partial / \partial{x}) = {\rm exp}(f \partial / \partial{x}) \circ \xi. \]
We deduce that $\xi \circ \varphi_{0} \circ \xi^{(-1)}$ is an element of ${\mathcal D}_{f}'$
whose infinitesimal generator is divergent.

  Consider the decomposition $\prod_{j=1}^{p} f_{j}^{l_{j}}$ of $f_{N}$ in irreducible factors.
Suppose there exists $1 \leq k \leq p$ such that $l_{k} \geq 2$. Consider an open neighborhood
$U$ of the origin such that $f \in {\mathcal O}(U)$. We can suppose that
$S= (\{ f_{k}=0 \} \cap \{ \partial{f_{k}}/\partial{x} \neq 0\}) \setminus \cup_{j \neq k} \{f_{j}=0\}$
is connected in $U$. We choose a point $q =(x^{0},x_{1}^{0},\hdots,x_{n}^{0}) \in U \cap S$.
By the choice of $q$ the function $f_{k}(x,x_{1}^{0},\hdots,x_{n}^{0})$ is a coordinate in the line
$\cap_{j=1}^{n} (x_{j}=x_{j}^{0})$
in the neighborhood of $x=x^{0}$. The one-variable theory of tangent to the identity diffeomorphisms
implies the existence of $\sum_{j=0}^{\infty} \lambda_{j} z^{j} \in {\mathbb C}\{z\}$ such that
the infinitesimal generator of
\[ \varphi_{0} = (x \circ {\rm exp}(f \partial/\partial{x}) +
f^{2} \sum_{j=0}^{\infty}\lambda_{j}f_{k}^{j},x_{1},\hdots,x_{n}) \]
restricted to $\cap_{j=1}^{n} (x_{j}=x_{j}^{0})$ does not belong to
${\mathcal X} ({\mathbb C},x^{0})$. We define
\[ T = \{ (x^{1},x_{1}^{1},\hdots,x_{n}^{1}) \in S \cap U :
(\log \varphi_{0})_{|\cap_{j=1}^{n} (x_{j}=x_{j}^{1})} \not \in {\mathcal X} ({\mathbb C},x^{1}) \} . \]
By the one-variable theory $S \setminus T$ is analytic in $S$. Since $q \in T$ then the origin is
contained in the closure of $T$. Therefore $\log \varphi_{0}$ diverges.

  Suppose that $l_{j}=1$ for any $1 \leq j \leq p$. Choose any $1 \leq k \leq p$.
There exists a sequence of points
$q_{r}=(x^{r},x_{1}^{r},\hdots,x_{n}^{r}) \in \{f_{k}=0\}$ ($r \in {\mathbb N}$) such that
$\lim_{r \to \infty} q_{r} = (0,\hdots,0)$ and
$(\partial{(x \circ {\rm exp}(f \partial{x}))}/\partial{x})(q_{r})$
is a $c_{r}$-root of the unit for some $c_{r} \geq 2$. We can suppose that the
sequence $c_{r}$ is strictly
increasing. Let $\lambda = (\lambda_{s})_{s \in {\mathbb N}}$ be a sequence
of complex numbers. We define
\[ \eta_{\lambda} = (x \circ {\rm exp}(f \partial/\partial{x}) + \sum_{j=2}^{\infty} \lambda_{j} f^{c_{j}+1},
x_{1},\hdots,x_{n}) . \]
Given $\lambda_{1}, \hdots, \lambda_{r-1}$ there exists
$0< K(\lambda_{1},\hdots,\lambda_{r-1}) < 1/(c_{r}+1)!$ such that
$(\eta_{\lambda})^{(c_{r})}_{\cap_{j=1}^{n} (x_{j}=x_{j}^{r})}$ is not the identity in the
neighborhood of $x=x^{r}$ if
$0<|\lambda_{r}|<K(\lambda_{1},\hdots,\lambda_{r-1})$ and $\lambda_{s} \in {\mathbb C}$ for
any $s > r$. Then $(\eta_{\lambda})_{\cap_{j=1}^{n} (x_{j}=x_{j}^{r})}$ is not the exponential of
an element of $\hat{\mathcal X} ({\mathbb C},x^{r})$ since then it would be periodic.
As a consequence we can obtain by induction a sequence
$0 < |\lambda_{j}| < 1/(c_{j}+1)!$ for any $j \geq 2$
such that $\log \eta_{\lambda}$ is divergent. We choose $\varphi_{0}=\eta_{\lambda}$.
\end{proof}
\section{The operator $L_{2}^{v}$}
\label{section:twodim}
The goal of this section is proving proposition \ref{pro:mosiop2}.
The techniques were already used in \cite{nonormal}.

Let  $B^{2} \subset {\mathbb C}[[x,y]]$ be the Banach space whose elements are
the power series  $H=\sum_{0 \leq j,k} H_{j,k} {x}^{j} {y}^{k}$ such that
\[ ||H||  = \sum_{0 \leq j,k} |H_{j,l}| < +\infty . \]
We have $B^{2} \subset {\mathcal O}(B(0,1)^{2})$. Moreover, a function $H \in B^{2}$
is continuous in $\overline{B(0,1)} \times \overline{B(0,1)}$. Given $v$ in  ${\mathbb C}[[x,y]]$
we can define for $j \geq 1$ the linear functionals $L_{2,j}^{v}:B^{2} \to {\mathbb C}$
such that
\[ L_{2}^{v}(H) = \sum_{j \geq 1} L_{2,j}^{v}(H) {y}^{j} \]
for any $H \in B^{2}$.
\begin{lem}
Let $v \in {\mathbb C}[[x,y]]$. Then $L_{2,j}^{v}$ is a linear continuous functional
for any $j \in {\mathbb N}$.
\end{lem}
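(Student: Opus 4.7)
The plan is to write down an explicit formula for $L_{2,n}^{v}(H)$ and observe that it depends only on finitely many coefficients of $v$ and of $H$, with bounds in $\|H\|$ that are uniform in $H$.

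First I would normalize the solution. Any two solutions of $\partial \alpha/\partial x = vH$ differ by an element of $\mathbb{C}[[y]]$, so $\hat{\alpha}_{H}(y,y) - \hat{\alpha}_{H}(0,y)$ is independent of the choice of $\hat{\alpha}_{H}$. I pick the canonical solution obtained by formal termwise integration in $x$: writing $v = \sum_{m,m'} v_{m,m'} x^{m} y^{m'}$ and $H = \sum_{p,q} H_{p,q} x^{p} y^{q}$, set
\[
\hat{\alpha}_{H}(x,y) \;=\; \sum_{j,k \geq 0} \frac{(vH)_{j,k}}{j+1}\, x^{j+1} y^{k},
\qquad (vH)_{j,k} = \sum_{m+p=j,\ m'+q=k} v_{m,m'} H_{p,q}.
\]
Then $\hat{\alpha}_{H}(0,y) = 0$, so $L_{2}^{v}(H) = \hat{\alpha}_{H}(y,y) = \sum_{n \geq 1} L_{2,n}^{v}(H)\, y^{n}$ with
\[
L_{2,n}^{v}(H) \;=\; \sum_{j=0}^{n-1} \frac{1}{j+1} \sum_{\substack{m+p=j\\ m'+q=n-1-j}} v_{m,m'} H_{p,q}.
\]

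From this formula linearity of $L_{2,n}^{v}$ in $H$ is transparent. For continuity, I would observe that the only coefficients of $v$ that appear in $L_{2,n}^{v}(H)$ are the finitely many $v_{m,m'}$ with $m+m' \leq n-1$. Let $C_{n} = \max\{|v_{m,m'}| : m+m' \leq n-1\} < \infty$. Then
\[
\bigl| L_{2,n}^{v}(H) \bigr| \;\leq\; C_{n} \sum_{j=0}^{n-1} \frac{1}{j+1} \sum_{\substack{m+p=j\\ m'+q=n-1-j}} |H_{p,q}|
\;\leq\; C_{n}\, n \sum_{p+q \leq n-1} |H_{p,q}| \;\leq\; C_{n}\, n\, \|H\|,
\]
which shows that $L_{2,n}^{v}$ is continuous on $B^{2}$ with operator norm at most $C_{n} n$.

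There is no serious obstacle here: the key point is simply that each Taylor coefficient in $y$ of $L_{2}^{v}(H)$ only involves finitely many data, so divergence of $v$ (which is the case of interest for proposition \ref{pro:mosiop2}) never interferes at the level of a fixed $n$. The uniform boundedness principle will then be applied later to the family $\{L_{2,n}^{v}\}_{n \in \mathbb{N}}$ in order to exploit the hypothesis $L_{2}^{v}(\mathbb{C}\{x,y\}) \subset \mathbb{C}\{y\}$, but for the present lemma continuity of each individual $L_{2,n}^{v}$ is all that is needed.
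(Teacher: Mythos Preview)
Your proof is correct and follows essentially the same approach as the paper's: both observe that $L_{2,n}^{v}(H)$ is a finite linear combination $\sum_{p+q<n} c_{p,q}^{n} H_{p,q}$ of the coefficients of $H$, which immediately gives continuity via $\|L_{2,n}^{v}\| \leq \max_{p+q<n} |c_{p,q}^{n}|$. The only difference is that you write out the constants $c_{p,q}^{n}$ explicitly in terms of the coefficients of $v$, whereas the paper leaves them implicit.
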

\begin{proof}
We denote $H=\sum_{0 \leq k,l} H_{k,l}(H) {x}^{k} {y}^{l}$. We have that
\[ L_{2,j}^{v} = \sum_{k+l<j} c_{k,l}^{j} H_{k,l} \]
where $c_{k,l}^{j} \in {\mathbb C}$ for all $j \geq 1$ and $k+l < j$. As a consequence
we obtain
$||L_{2,j}^{v}|| \leq \max_{k+l <j} |c_{k,l}^{j}|$.
\end{proof}
\begin{lem}
\label{lem:boupri}
Let $v \in {\mathbb C}[[x,y]]$. Either
$\lim \sup_{j \to \infty} \sqrt[j]{||L_{2,j}^{v}||} < + \infty$ or
$L_{2}^{v}(H) \not \in {\mathbb C}\{y\}$ for any $H$ in a dense subset of $B^{2}$.
\end{lem}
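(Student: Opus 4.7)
The plan is to apply the Baire category theorem (the uniform boundedness principle) to the family of continuous linear functionals $\{L_{2,j}^{v}\}_{j \geq 1}$ on the Banach space $B^{2}$. The key elementary reformulation is that a formal power series $\sum_{j \geq 1} a_{j} y^{j}$ belongs to ${\mathbb C}\{y\}$ if and only if there exists $m \in {\mathbb N}$ with $\sup_{j \geq 1} |a_{j}|/m^{j} < \infty$ (Cauchy--Hadamard). Consequently
\[
Z \stackrel{def}{=} \{ H \in B^{2} : L_{2}^{v}(H) \in {\mathbb C}\{y\} \}
= \bigcup_{m=1}^{\infty} \bigcup_{N=1}^{\infty} V_{m,N},
\]
where $V_{m,N} = \{ H \in B^{2} : |L_{2,j}^{v}(H)| \leq N m^{j} \text{ for every } j \geq 1 \}$.

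Since each $L_{2,j}^{v}$ is continuous by the previous lemma, every $V_{m,N}$ is an intersection of closed half-spaces and is therefore closed in $B^{2}$. I would then show that if some $V_{m,N}$ has nonempty interior then the first alternative of the statement holds. Indeed, if $V_{m,N}$ contains a ball $B(H_{0}, r)$, then for every $H \in B^{2}$ with $||H||<r$ both $H_{0}$ and $H_{0}+H$ lie in $V_{m,N}$, so
\[
|L_{2,j}^{v}(H)| \leq |L_{2,j}^{v}(H_{0}+H)| + |L_{2,j}^{v}(H_{0})| \leq 2Nm^{j}
\]
for every $j \geq 1$. Homogeneity then yields $||L_{2,j}^{v}|| \leq 2Nm^{j}/r$, whence $\lim \sup_{j \to \infty} \sqrt[j]{||L_{2,j}^{v}||} \leq m < + \infty$, the first alternative.

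Assume now that the first alternative fails, i.e. $\lim \sup_{j \to \infty} \sqrt[j]{||L_{2,j}^{v}||} = +\infty$. By the previous paragraph every $V_{m,N}$ has empty interior; being also closed, it is nowhere dense. Hence $Z$ is a countable union of nowhere dense sets, i.e. meager in $B^{2}$. The Baire category theorem, applied to the complete metric space $B^{2}$, then implies that $B^{2} \setminus Z$ is residual and in particular dense in $B^{2}$, which is precisely the second alternative. The only slightly delicate point in the whole argument is the basic equivalence between convergence of $L_{2}^{v}(H)$ and boundedness of $(|L_{2,j}^{v}(H)|/m^{j})_{j}$ for some $m$; once that is set up, the proof is the textbook Baire / uniform boundedness scheme and presents no real obstacle.
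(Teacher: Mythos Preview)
Your proof is correct and rests on the same idea as the paper's---Baire category on $B^{2}$---with only a cosmetic difference in organization. The paper assumes $\limsup_{j}\sqrt[j]{\|L_{2,j}^{v}\|}=+\infty$, picks an auxiliary sequence $a_{j}\to\infty$ with $\limsup_{j}\sqrt[j]{\|L_{2,j}^{v}\|}/a_{j}=+\infty$, and applies the resonance form of the uniform boundedness principle to the rescaled functionals $L_{2,j}^{v}/a_{j}^{j}$ to obtain a dense set of $H$ on which $\sqrt[j]{|L_{2,j}^{v}(H)|}\geq a_{j}\to\infty$; you instead write $Z=\bigcup_{m,N}V_{m,N}$ and invoke Baire directly, which avoids the auxiliary sequence and makes the quantitative step (nonempty interior $\Rightarrow$ $\|L_{2,j}^{v}\|\leq 2Nm^{j}/r$) explicit. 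Either packaging yields the same conclusion with the same strength (a residual, hence dense, set of bad $H$).
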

\begin{proof}
  Suppose $\lim \sup_{j \to \infty} \sqrt[j]{||L_{2,j}^{v}||} = + \infty$. We choose a sequence
$(a_{j})$ of positive numbers such that $a_{j} \to \infty$ and
\[ \lim \sup_{j \to \infty} \frac{\sqrt[j]{||L_{2,j}^{v}||}}{a_{j}} = + \infty. \]
Hence $\lim \sup_{j \to \infty} ||L_{2,j}^{v} /a_{j}^{j}|| = + \infty$. We deduce that
\[ \lim \sup_{j \to \infty} |L_{2,j}^{v}(H)| / a_{j}^{j} = + \infty \]
for any $H$ in a dense subset $E$ of $B^{2}$ by the uniform boundedness principle.
Moreover, since
\[ \lim \sup_{j \to \infty} \sqrt[j]{|L_{2,j}^{v}(H)|} \geq \lim \inf_{j \to \infty} a_{j} = + \infty \]
then $L_{2}^{v}(H) \not \in {\mathbb C}\{ y \}$ for any $H \in E$.
\end{proof}
\begin{pro}
Let $v \in {\mathbb C}[[x,y]]$. Suppose $L_{2}^{v}(B^{2}) \subset {\mathbb C}\{y\}$.
Then there exists $\eta>0$ such that
$L_{2}^{v}(H) \in {\mathcal O}(B(0,\eta ))$ for any $H \in B^{2}$.
\end{pro}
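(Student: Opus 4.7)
The plan is to derive this proposition as a direct consequence of lemma \ref{lem:boupri} together with a standard root-test estimate; no new ingredient beyond the uniform boundedness principle (already used in lemma \ref{lem:boupri}) is needed.

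First I would invoke the dichotomy of lemma \ref{lem:boupri} and rule out the second alternative. Indeed, if $\lim\sup_{j \to \infty} \sqrt[j]{||L_{2,j}^{v}||} = +\infty$, then the lemma produces a dense subset of $B^{2}$ whose image under $L_{2}^{v}$ is divergent. In particular this subset is nonempty, contradicting the standing hypothesis $L_{2}^{v}(B^{2}) \subset {\mathbb C}\{y\}$. Therefore
\[
M \stackrel{def}{=} \lim\sup_{j \to \infty} \sqrt[j]{||L_{2,j}^{v}||} < +\infty.
\]

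Next I would fix any $\eta$ with $0 < \eta < 1/(M+1)$ (the specific choice is immaterial, only that $\eta M < 1$ strictly). By the definition of $\lim\sup$ there exists $N$ such that $||L_{2,j}^{v}|| \leq (M+1)^{j}$ for all $j > N$. For an arbitrary $H \in B^{2}$ the continuity of the functionals $L_{2,j}^{v}$ then gives
\[
|L_{2,j}^{v}(H)| \leq ||L_{2,j}^{v}|| \cdot ||H|| \leq (M+1)^{j} \, ||H||
\]
for $j > N$. Summing, the series $L_{2}^{v}(H) = \sum_{j \geq 1} L_{2,j}^{v}(H) \, y^{j}$ is dominated on $\overline{B(0,\eta)}$ by a constant multiple of $\sum_{j > N} (\eta(M+1))^{j}$, which converges since $\eta(M+1) < 1$. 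Thus $L_{2}^{v}(H)$ defines a holomorphic function on $B(0,\eta)$, with $\eta$ independent of $H \in B^{2}$.

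There is no real obstacle here; the essential work was already concentrated in lemma \ref{lem:boupri} (and its use of Banach--Steinhaus). The only thing to be careful about is the logical direction: one argues the contrapositive of the ``divergent on a dense set'' alternative, which is valid because a dense subset of $B^{2}$ is automatically nonempty. Once the uniform bound $||L_{2,j}^{v}|| = O((M+1)^{j})$ is in hand, the radius of convergence is bounded below by $1/(M+1)$ uniformly in $H$, as required.
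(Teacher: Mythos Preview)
Your proof is correct and follows essentially the same approach as the paper's own proof: invoke lemma \ref{lem:boupri} to obtain a finite bound on $\lim\sup_{j\to\infty}\sqrt[j]{\|L_{2,j}^{v}\|}$, then use the resulting geometric control $|L_{2,j}^{v}(H)|\leq \|L_{2,j}^{v}\|\cdot\|H\|$ to bound the radius of convergence of $L_{2}^{v}(H)$ uniformly in $H$. The paper phrases the last step via the root test on $\lim\sup\sqrt[j]{|L_{2,j}^{v}(H)|}$ rather than via an explicit majorizing geometric series, but this is only a cosmetic difference.
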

\begin{proof}
   There exists $\eta >0$ such that
$\lim \sup_{j \to \infty} \sqrt[j]{||L_{2,j}^{v}||} \leq 1 / \eta $
by lemma \ref{lem:boupri}. As a consequence
\[ \lim \sup_{j \to \infty} \sqrt[j]{|L_{2,j}^{v}(H)|} \leq
\lim \sup_{j \to \infty} \left({ \sqrt[j]{||L_{2,j}^{v}||} \sqrt[j]{||H|| } }\right)
\leq  1 / \eta  . \]
That implies that $L_{2}^{v}(H) \in {\mathcal O}(B(0,\eta ))$ for any $H \in B^{2}$.
\end{proof}
\begin{proof}[Proof of prop. \ref{pro:mosiop2}]
Since $L_{2}^{v}(B^{2}) \subset {\mathbb C} \{ y \}$ then there exists $C \geq 1$ such that
$||L_{2,j}^{v}|| \leq C^{j}$ for any $j \geq 1$ by lemma \ref{lem:boupri}.
We denote $v = \sum_{0 \leq k,l} v_{k,l} {x}^{k} {y}^{l}$. We have
\[ L_{2,1}^{v}(1)=v_{0,0} \Rightarrow |v_{0,0}| \leq ||L_{2,1}^{v}|| ||1||  \leq C. \]
Analogously we want to estimate $v_{k,0}$, $\hdots$, $v_{0,k}$ for any $k \geq 0$. We obtain
\[ \mbox{Hilb}^{k}
\left({
\begin{array}{c}
v_{0,k} \\
v_{1,k-1} \\
\vdots \\
v_{k,0}
\end{array}
}\right)
=
\left({
\begin{array}{c}
L_{2,k+1}^{v}(1) \\
L_{2,k+2}^{v}(x) \\
\vdots \\
L_{2,2k+1}^{v}(x^{k})
\end{array}
}\right)  \]
where $\mbox{Hilb}^{k}$ is the $(k+1) \times (k+1)$ Hilbert matrix; this is a real symmetric matrix such that
$\mbox{Hilb}_{a,b}^{k} = 1 /(a+b-1)$ for $1 \leq a,b \leq k+1$.
Moreover $\mbox{Hilb}^{k}$ is positive definite and following \cite{Kalyabin} we obtain that
\[ ||{(\mbox{Hilb}^{k})}^{-1}||_{2} = \frac{{\rho}^{4k}}{K \sqrt{k}} (1+o(1)) \]
where ${||\hdots||}_{2}$ is the spectral norm, $K=(8 \pi^{3/2} 2^{3/4})/{(1+ \sqrt{2})}^{4}$
and $\rho =1 + \sqrt{2}$.
We have $|L_{2,k+l+1}^{v}(x^{l})| \leq ||L_{2,k+l+1}^{v}|| ||{x}^{l}|| \leq C^{k+l+1}$.
As a consequence we obtain
\[ ||v_{0,k}, \hdots, v_{k,0}||_{2} \leq \frac{{\rho}^{4k}}{K \sqrt{k}} \sqrt{k+1} C^{2k+1} (1+h(k)) . \]
where $\lim_{k \to \infty} h(k)=0$. This implies that
\[ |v_{l,m}| \leq \frac{{\rho}^{4(l+m)}}{K \sqrt{l+m}} \sqrt{l+m+1} C^{2(l+m)+1} (1+h(l+m)) \]
for $0 \leq l,m$ and then
$v \in {\mathcal O}(B(0,{\rho}^{-4} {C}^{-2})^{2})$.
\end{proof}
\section{The operator $L_{3}^{v}$}
Analogously we define
the Banach space $B^{3} \subset {\mathbb C}[[x,y,z]]$ whose elements
$H=\sum_{0 \leq j,k,l} H_{j,k,l} {x}^{j} {y}^{k} {z}^{l}$ satisfy
$||H||  = \sum_{0 \leq j,k,l} |H_{j,k,l}| < +\infty$.
We can define the operators $L_{3,j,k}^{v}:B^{3} \to {\mathbb C}$ for
$0 \leq k < j$ such that
$L_{3}^{v}(H) = \sum_{0 \leq k < j} L_{3,j,k}^{v}(H) x^{j} y^{k}$.
The following lemmas are analogous to those in section \ref{section:twodim}.
\begin{lem}
Let $v \in {\mathbb C}[[x,y,z]]$. Then $L_{3,j,k}^{v}$ is a linear continuous functional
for all $0 \leq k < j$.
\end{lem}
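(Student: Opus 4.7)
The plan is to imitate, term-by-term, the proof already given in the two-variable case for $L_{2,j}^{v}$. The statement has two parts: linearity and continuity. Linearity is automatic from the construction: the formal solution $\hat{\alpha}_{g}$ of $\partial \alpha/\partial x = v g$ is obtained by antiderivation in $x$ of $vg$, which is linear in $g$; hence every coefficient $\alpha_{j',k',l}$ of $\hat{\alpha}_{g}$ depends linearly on the coefficients of $g$, and the same is true of $L_{3,j,k}^{v}(g)$.

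For continuity the idea is to produce an explicit finite representation. Write $v = \sum_{a,b,c \geq 0} v_{a,b,c} x^{a} y^{b} z^{c}$ and $H = \sum_{d,e,f \geq 0} H_{d,e,f} x^{d} y^{e} z^{f}$ in $B^{3}$. Multiplying and integrating in $x$ yields
\[
\alpha_{j',k',l}(H) \;=\; \frac{1}{j'} \sum_{\substack{a+d=j'-1 \\ b+e=k' \\ c+f=l}} v_{a,b,c}\, H_{d,e,f},
\]
so $\alpha_{j',k',l}(H)$ is a finite ${\mathbb C}$-linear combination of coefficients of $H$. By definition
\[
L_{3,j,k}^{v}(H) \;=\; \sum_{\substack{j'+l = j,\ k'+l = k \\ 0 \leq k' < j'}} \alpha_{j',k',l}(H),
\]
a finite sum (the constraints force $l \leq \min(j,k)$, $j' = j-l$, $k'=k-l$). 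Substituting the previous formula, $L_{3,j,k}^{v}(H)$ becomes a finite linear combination $\sum_{(d,e,f) \in F_{j,k}} c^{j,k}_{d,e,f} H_{d,e,f}$ for some finite set $F_{j,k}$ of multi-indices and coefficients $c^{j,k}_{d,e,f} \in {\mathbb C}$ depending only on $v$, $j$ and $k$.

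Since the norm on $B^{3}$ is $\|H\| = \sum_{d,e,f} |H_{d,e,f}|$, we estimate
\[
|L_{3,j,k}^{v}(H)| \;\leq\; \Bigl(\max_{(d,e,f) \in F_{j,k}} |c^{j,k}_{d,e,f}|\Bigr) \sum_{(d,e,f) \in F_{j,k}} |H_{d,e,f}| \;\leq\; \Bigl(\max_{(d,e,f) \in F_{j,k}} |c^{j,k}_{d,e,f}|\Bigr)\, \|H\|,
\]
which proves boundedness, with operator norm $\|L_{3,j,k}^{v}\| \leq \max_{(d,e,f) \in F_{j,k}} |c^{j,k}_{d,e,f}|$. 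There is no real obstacle here: the only point to be careful about is writing out the finite index set $F_{j,k}$ cleanly so that the finiteness of the sum (and hence continuity) is manifest. This mirrors the two-variable lemma exactly and sets up the later bound $\limsup_{j,k} \sqrt[j+k]{\|L_{3,j,k}^{v}\|} < \infty$ needed for the analogue of proposition \ref{pro:mosiop2} in three variables.
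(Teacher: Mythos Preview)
Your proof is correct and follows exactly the approach the paper intends: the paper does not write out a separate proof for this lemma but simply declares it analogous to the two-variable case, and your argument is precisely that analogue (finite linear combination of coefficients of $H$, hence bounded by the maximum coefficient times $\|H\|$). Your extra care in spelling out the formula for $\alpha_{j',k',l}$ and the finite range of $l$ is helpful but not new content relative to the paper.
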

\begin{lem}
\label{lem:boupri2}
Either $\sup_{0 \leq k < j} \sqrt[j+k]{||L_{3,j,k}^{v}||} < + \infty$ or
$L_{3}^{v}(H) \not \in {\mathbb C}\{x,y\}$ for any $H$ in a dense subset of $B^{3}$.
\end{lem}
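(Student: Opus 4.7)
The plan is to mimic the proof of Lemma \ref{lem:boupri} almost verbatim, the only difference being that we now have a two-indexed family of continuous linear functionals $\{L_{3,j,k}^{v}\}_{0 \leq k < j}$ on the Banach space $B^{3}$, and that the Cauchy--Hadamard formula must be applied to a double power series. First I will record the one observation needed to translate the conclusion into an estimate: a formal series $\sum_{0 \leq k < j} a_{j,k} x^{j} y^{k}$ defines an element of ${\mathbb C}\{x,y\}$ if and only if $\limsup \sqrt[j+k]{|a_{j,k}|} < +\infty$, so I will work throughout with the quantity $\sqrt[j+k]{|L_{3,j,k}^{v}(H)|}$.

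Next, suppose the first alternative in the statement fails, i.e., $\sup_{0 \leq k < j} \sqrt[j+k]{\|L_{3,j,k}^{v}\|} = +\infty$. Then I will extract a sequence of index pairs $(j_{n},k_{n})$ with $k_{n}<j_{n}$ such that $\sqrt[j_{n}+k_{n}]{\|L_{3,j_{n},k_{n}}^{v}\|} \to +\infty$, and choose a sequence $a_{n}\to +\infty$ growing slowly enough that $\sqrt[j_{n}+k_{n}]{\|L_{3,j_{n},k_{n}}^{v}\|}/a_{n} \to +\infty$, equivalently $\|L_{3,j_{n},k_{n}}^{v}\|/a_{n}^{j_{n}+k_{n}} \to +\infty$. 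At this point the uniform boundedness principle applied to the sequence of continuous linear functionals $\{L_{3,j_{n},k_{n}}^{v}/a_{n}^{j_{n}+k_{n}}\}_{n \in {\mathbb N}}$ produces a dense (in fact residual) subset $E \subset B^{3}$ on which $\sup_{n} |L_{3,j_{n},k_{n}}^{v}(H)|/a_{n}^{j_{n}+k_{n}} = +\infty$.

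Finally, for any $H \in E$ I will pick a subsequence $(n_{m})$ along which $|L_{3,j_{n_{m}},k_{n_{m}}}^{v}(H)| > a_{n_{m}}^{j_{n_{m}}+k_{n_{m}}}$, so that $\sqrt[j_{n_{m}}+k_{n_{m}}]{|L_{3,j_{n_{m}},k_{n_{m}}}^{v}(H)|} > a_{n_{m}} \to +\infty$. Combined with the Cauchy--Hadamard criterion from the first step, this forces $L_{3}^{v}(H) \not\in {\mathbb C}\{x,y\}$ for every $H \in E$. The only point requiring a small amount of care is the bookkeeping of the two indices $(j,k)$ restricted to $k<j$, and in particular making sure that an unbounded subsequence of $\sqrt[j+k]{|L_{3,j,k}^{v}(H)|}$ inside this set of indices is enough to obstruct convergence of the whole restricted series; this is immediate since the omitted monomials (those with $k \geq j$) do not appear in $L_{3}^{v}(H)$ at all, so the radius of convergence of $L_{3}^{v}(H)$ is controlled exactly by the coefficients with $k<j$.
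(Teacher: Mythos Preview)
Your proposal is correct and follows exactly the approach the paper intends: the paper does not write out a separate proof but simply declares the lemma ``analogous'' to Lemma~\ref{lem:boupri}, and your argument is the natural two-index adaptation of that proof via the uniform boundedness principle and Cauchy--Hadamard.
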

\begin{proof}[proof of prop. \ref{pro:mosiop3}]
Since $L_{3}^{v}(B^{3}) \subset {\mathbb C} \{ x,y \}$ then there exists $C \geq 1$ such that
$||L_{3,j,k}^{v}|| \leq C^{j+k}$ for all $0 \leq k < j$ by lemma \ref{lem:boupri2}.
We denote $v = \sum_{0 \leq j,k,l} v_{j,k,l} {x}^{j} {y}^{k} {z}^{l}$.
Fix $j,k,l \in {\mathbb N}$. Denote $a=j+l$, $b=k+l$ and $d=\max(b-a,0)$. We have
\[ \mbox{Hilb}^{a+d}
\left({
\begin{array}{c}
v_{-d,b-a-d,a+d} \\
v_{-d+1,b-a-d+1,a+d-1} \\
\vdots \\
v_{a,b,0}
\end{array}
}\right)
=
\left({
\begin{array}{c}
L_{3,a+d+1,b}^{v}(x^{d}) \\
L_{3,a+d+2,b}^{v}(x^{d+1}) \\
\vdots \\
L_{3,2a+2d+1,b}^{v}(x^{a+2d})
\end{array}
}\right)  . \]
The terms $v_{\alpha,\beta,\gamma}$ where any subindex is negative are zero by definition.
Proceeding like in the proof of proposition \ref{pro:mosiop2} we can show
\[ |v_{j,k,l}| \leq C \frac{{(\rho^{4} C^{3})}^{j+k+l}}{K} (1+h(j,k,l))  \]
where $\lim_{j+k+l \to \infty} h(j,k,l)=0$. Thus
$v \in {\mathcal O}(B(0,\rho^{-4} C^{-3
})^{3})$.
\end{proof}
\bibliography{rendu}
\end{document}